\newcommand{\Rr}{{\mathbb{R}}}
\newcommand{\Nn}{{\mathbb{N}}}
\newcommand{\Ee}{{\mathds{E}}}%notice it is not mathbb
\newcommand{\Ff}{{\mathcal{F}}}
\newcommand{\Pp}{{\mathcal{P}}}
\newcommand{\bx}{{\bf x}}
\newcommand{\bX}{{\bf X}}
\newcommand{\bZ}{{\bf Z}}
\newcommand{\bP}{{\bf P}}
\newcommand{\bv}{{\bf v}}
\newcommand{\bw}{{\bf w}}
\newcommand{\tX}{\tilde{X}}
\newcommand{\tZ}{\tilde{Z}}
\newcommand{\tP}{\tilde{P}}
\newcommand{\tH}{\tilde{H}}
\def\leq{\leqslant}
\def\geq{\geqslant}
\numberwithin{equation}{section}
\newtheoremstyle{thmlemcorr}{10pt}{10pt}{\itshape}{}{\bfseries}{.}{10pt}{{\thmname{#1}\thmnumber{
#2}\thmnote{ (#3)}}}
\newtheoremstyle{thmlemcorr*}{10pt}{10pt}{\itshape}{}{\bfseries}{.}\newline{{\thmname{#1}\thmnumber{
\newtheoremstyle{defi}{10pt}{10pt}{\itshape}{}{\bfseries}{.}{10pt}{{\thmname{#1}\thmnumber{
#2}\thmnote{ (#3)}}}
\newtheoremstyle{remexample}{10pt}{10pt}{}{}{\bfseries}{.}{10pt}{{\thmname{#1}\thmnumber{
#2}\thmnote{ (#3)}}}
\newtheoremstyle{ass}{10pt}{10pt}{}{}{\bfseries}{.}{10pt}{{\thmname{#1}\thmnumber{
A#2}\thmnote{ (#3)}}}
\theoremstyle{thmlemcorr}
\newtheorem{theorem}{Theorem}
\numberwithin{theorem}{section}
\newtheorem{lemma}[theorem]{Lemma}
\newtheorem{proposition}[theorem]{Proposition}
\theoremstyle{thmlemcorr*}
\newtheorem{theorem*}{Theorem}
\newtheorem{lemma*}[theorem]{Lemma}
\newtheorem{corollary*}[theorem]{Corollary}
\newtheorem{proposition*}[theorem]{Proposition}
\newtheorem{problem*}[theorem]{Problem}
\newtheorem{conjecture*}[theorem]{Conjecture}
\theoremstyle{defi}
\newtheorem{hyp}{Assumption}
\newtheorem{problem}{Problem}
\theoremstyle{remexample}
\newtheorem{remark}[theorem]{Remark}
\theoremstyle{ass}
\begin{document}

\title[A random-supply MFG price model]{A random-supply Mean Field Game price model}

\author{Diogo Gomes}\thanks{King Abdullah University of Science and Technology (KAUST), CEMSE Division, Thuwal 23955-6900.
Saudi Arabia.
e-mail: diogo.gomes@kaust.edu.sa.}%
\author{Julian Gutierrez}\thanks{King Abdullah University of Science and Technology (KAUST), CEMSE Division, Thuwal 23955-6900.
Saudi Arabia.
e-mail: julian.gutierrezpineda@kaust.edu.sa.}%
\author{Ricardo Ribeiro}\thanks{King Abdullah University of Science and Technology (KAUST), CEMSE Division, Thuwal 23955-6900.
Saudi Arabia.
e-mail: ricardo.ribeiro@kaust.edu.sa.}

%\affil[1]{King Abdullah University of Science and Technology - KAUST}%
%\affil[2]{Affiliation not available}%
%\affil[3]{Kwame Nkrumah University of Science and Technology}%

%\author{Diogo A.
%  Gomes}
%\address[D.~A.~Gomes]{
%        King Abdullah University of Science and Technology (KAUST), CEMSE Division, Thuwal 23955-6900.
% Saudi Arabia, and  
%        KAUST SRI, Center for Uncertainty Quantification in Computational Science and Engineering.}
%\email{diogo.gomes@kaust.edu.sa}
%\author{John Doe}
%\address[J.~Doe]{
%        The University Address}
%\email{doe@doe.tv}

\keywords{Mean Field Games; Price formation; Common noise, Lagrange multiplier}
%\subjclass[2010]{
%        35J47, %Second order elliptic systems
%        35A01} %Existence problems: global existence, local existence, non-existence

\thanks{
      The authors were partially supported by KAUST baseline funds and 
 KAUST OSR-CRG2017-3452.
}
\date{\today}

\begin{abstract}
We consider a market where a finite number of players trade an asset whose supply is a stochastic process. The price formation problem consists of finding a price process that ensures that when agents act optimally to minimize their trading costs, the market clears, and supply meets demand. This problem arises in market economies, including electricity generation from renewable sources in smart grids. Our model includes noise on the supply side, which is counterbalanced on the consumption side by storing energy or reducing the demand according to a dynamic price process. By solving a constrained minimization problem, we prove that the Lagrange multiplier corresponding to the market-clearing condition defines the solution of the price formation problem. For the linear-quadratic structure, we characterize the price process of a continuum population using optimal control techniques. We include numerical schemes for the price computation in the finite and infinite games, and we illustrate the model using real data.
\end{abstract}

\maketitle

\section{introduction}
Mean-field game theory (MFG) is an approach to study the evolution of a population of competitive rational players.
Each player solves an optimal control problem that depends on statistical features of the population rather than one-to-one interactions.
The statistical features inform the objective of each agent, determining their dynamics. Adopting a MFG approach, the authors in \cite{gomes2018mean} addressed a deterministic price formation model with a market-clearing condition in which the objectives of a continuum of agents are coupled to the price.
In this paper, we study a price formation model where $N$ agents interact in a market via the price, $\varpi$, of the commodity they trade and whose supply is random.
The agents meet a balance condition that guarantees the supply, $Q$, of the commodity equals its demand.
The novelty of our model consists of considering a random supply, such as electricity generation from sustainable sources.

The randomness in price formation has potential applications in renewable energy production on smart grids.
Small devices in the grid can store energy that can be sold back to the grid. Changes in weather conditions and network load cause fluctuations in the available supply. Because the agents can sell the surplus of power, they can benefit from load-adaptive pricing (\cite{MK14},  \cite{ATM19}).
%One approach to load-adaptive pricing is price formation.

To model price formation, there are two different approaches. One approach assumes that the price is a function of the variables in the model.  In this setting, \cite{BS10} compared different pricing policies under partially incomplete, complete, and totally complete information. Their model consisted of a reverse Stackelberg game with non-linear dependence on the price, and price formation is obtained by optimizing the producer's revenue. The work \cite{TBD20} presented a Cournot model  that specified the log-price dynamics, including a Brownian motion and a jump process as common noise. In \cite{ATM19}, the spot price is given as a strictly increasing function of the exogenous demand function and the mean energy trading rates. The optimal trading rates were determined by solving a forward-backward system that characterizes the mean-field equilibria. The same authors extended this model in \cite{alasseur2021mfg} 
to include penalty terms at random jump times in the state variables. The spot price is an inverse demand function of the expected consumption. They used forward-backward and Riccati equations with jumps to characterize the mean-field equilibrium. The work \cite{AidDumitrescuTankov2021} considered a MFG of optimal stopping to model the switch between traditional and renewable means of energy production. They considered a MFG where the market price couples the agents dynamics, and it is prescribed as a function of a price cap, the exogenous demand, and the supply of both the conventional and the renewable means of production. In their model, the market price is prescribed as a function of a price cap, the exogenous demand, and the supply of both the conventional and the renewable means of production. Recent works have examined the case of intraday electricity markets. \cite{FTT20} studied a linear-quadratic model in the presence of a major player. They distinguished the fundamental price (with no market impact) from the market price. The market price has an explicit form in terms of  the average position of the agents, the position of the major agent, and the fundamental price, which is an exogenous variable for the model. The same approach was taken in their consecutive work \cite{feron2021price}, where the market price depends on the fundamental price and the average position of the agents. They derived a MFG  formulation using conditional expectations w.r.t. the common noise and presented a convergence result between the finite model equilibrium to the mean-field model equilibrium as the number of players goes to infinity. They illustrated their results for the EPEX intraday electricity market. 

Our work follows a second approach, which was first introduced in \cite{Gomes20164693} and \cite{gomes2018mean}. In this approach, the price is unknown and determined by a balance condition. For instance, \cite{BS02} proposed a Stackelberg game for revenue-maximization with a linear dependence on the price. The price is obtained using the first-order conditions for the optimization problem. A model for Solar Renewable Energy Certificate Markets (SREC) was presented in \cite{JSF20}, where the supply of the energy being priced is controlled. They obtained the SREC price using a market clearing condition and a first-order optimality condition for the optimal planned generation and energy trading. \cite{FT20} obtained the equilibrium price using a market clearing condition and a forward-backward system of the McKean-Vlasov type characterizing the optimal trading rate for the agents. The same authors studied in \cite{fujii2021equilibrium} a further extension that considers a Major player in the market. The market price is characterized by the solution to a forward-backward stochastic differential equation (SDE) system and a market-clearing condition. In \cite{aid2020equilibrium}, the authors presented a model of $N$ agents with demand forecasts subjected to common noise. In their model, agents meet the demand by selecting controls on their production and trading rate, satisfying an equilibrium condition. The price is obtained using the existence result for a forward-backward coupled system. \cite{Evangelista2022} studied the convergence of a finite-population game to a MFG for a model where traders control their turnover rates with noise in the inventory. They considered a market clearing condition between the aggregated inventory and the supply. The price was obtained by characterizing the Nash equilibrium of the finite-population game using a forward-backward SDE. They illustrated their results using real high-frequency data.

Because the works \cite{JSF20}, \cite{FT20}, and \cite{aid2020equilibrium} deal with a model similar to the one we consider, let us emphasize the novelty in our work. The model in \cite{JSF20} is specialized in the SREC markets, which provides further structure to the model formulation, such as a quadratic cost structure. They used a forward-backward system and variational techniques to formulate a fixed-point problem to prove the existence of a mean-field distribution, from which they get the price. In contrast, we deal with a general convex cost, illustrate our results for the quadratic case, and prove existence using a variational approach. In \cite{FT20}, the authors approximated the equilibrium price by conditioning a stochastic process to the filtration induced by the common noise. They showed that this approximation satisfies the market clearing condition when the number of players increases. In distinction, we obtain a price for which the balance condition for the $N$-players hold. Lastly, \cite{aid2020equilibrium} derived the price equilibrium using the existence and uniqueness result for a forward-backward coupled system. In contrast, our existence results rely on the calculus of variations approach, whereas we derive forward-backward systems as necessary conditions for such existence. These conditions allow us to identify the price as the Lagrange multiplier for a $N$-agent minimization problem with constraints in which the price is no longer present.

The case of a finite number of players with a deterministic supply was addressed in \cite{SummerCamp2019}, where only existence and uniqueness were proved, and no numerical approximation scheme was considered. The model we present here generalizes the deterministic supply case. In \cite{GoGuRi2021}, we addressed the stochastic supply case from the optimal control perspective, and we provided numerical results for a quadratic Lagrangian depending on the trading rate only. The main contribution of this paper is the proof of existence and uniqueness of solutions for the price formation model with a finite number of players in the stochastic case under a general cost function. We adopt a variational approach to obtain our results, and we elaborate on the numerical approximation of solutions.

Next, we introduce our model. Let $T>0$ be the time horizon.
In the following, we fix  a complete filtered probability space $(\Omega,\Ff,\mathds{F},\mathds{P})$; that is, $\mathds{F}=(\Ff_t)_{0\leq t \leq T}$ is the standard filtration generated by $t\mapsto W_t$, a Brownian motion in $\Rr$ (see \cite{MR2001996}, Definition 3.1.3, and \cite{BackwardSDE1997}, Section 2, for additional details).
Here, $W$ plays the role of the common noise in the sense that the supply follows the stochastic differential equation (SDE)
\begin{equation}\label{eq: supply dynamics intro}
dQ_t=b^S(Q_t,t)dt+\sigma^S(Q_t,t)dW_t.
\end{equation}

%(\cite{karatzas1991brownian}, Corollary 3.4).
%, that is, $\Ff_s\subset \Ff_t \subset \Ff$ for $0\leq s < t\leq T$ and $W_t$ is adapted to $\Ff_t$, which is a complete $\sigma$-algebra of $\Omega$ 

In our model, the agent's interaction determines the market equilibrium price of the commodity.
All of this commodity produced is consumed entirely.
Let $N$ be the number of agents and let the state variable $X^i_t$ account for the quantity of the commodity held by agent $i$ at time $t$.
Each agent controls its trading rate according to
\begin{equation}\label{eq: Agent dynamics}
dX_t=v_t dt, ~ t\in[0,T],
\end{equation}
where $v:[0,T]\times \Omega \to \Rr$, the control variable, is progressively measurable with respect to $\mathds{F}$.
The optimization problem we consider reads:
\begin{problem}\label{problem: problem}
Let $N$ be the number of agents.
Let the supply, $Q$, be a stochastic process adapted to $\mathds{F}$ solving \eqref{eq: supply dynamics intro}.
Let $L \in C^1(\Rr^2;\Rr)$ be a non-negative Lagrangian, and $\Psi \in C^1(\Rr)$ be a non-negative terminal cost.
Assume that at time $t=0$, each agent $i$ owns a quantity $x_0^i\in\Rr$ of the commodity.

Find a price process $\varpi$ and control processes $v^i$, all adapted to $\mathds{F}$, such that for each $i$, with $1\leq i \leq  N$, $X^i$ solves \eqref{eq: Agent dynamics} with the initial condition $X^i_0=x^i_0$, and minimizes the cost functional
\begin{equation}\label{eq: Functional per agent}
\Ee\left[ \int_0^T L(X^i_t,v^i_t) + \varpi_t v_t^i~ dt + \Psi(X^i_T) \right],
\end{equation}
subject to the balance condition
\begin{equation}\label{eq:Balance condition}
\frac{1}{N} \sum_{i=1}^N v_t^i = Q_t, \quad \mbox{\emph{ for }} 0\leq t \leq T.
\end{equation}
\end{problem}

The functional \eqref{eq: Functional per agent} represents the expected cost for a representative agent on $[0,T]$.
This cost consists of three parts: the trading at the current price through the linear term $\varpi v$, the charges related to storage or market impact encoded in $L$, and the terminal cost; the terminal cost reflects the preferences of players at the terminal time.
The balance condition \eqref{eq:Balance condition} guarantees demand consumes all supply.
For this problem, we obtain the following result:

\begin{theorem}\label{thm:MainThm}
Let\footnote{See Section \ref{sec:AssumptionsNotation} for notation and assumptions.} $Q\in \mathds{H}_{\mathds{F}}$ and suppose that Assumptions \ref{hyp: L jconvex}-%\ref{hyp: Psi convex}, \ref{hyp: Psi growth}, \ref{hyp: L Lx Lv},  and 
\ref{hyp: Coercivity} hold.
Then, there exists control processes ${v^*}^i$, for $1\leq i \leq N$, and a price process $\varpi$ that solve Problem \ref{problem: problem}.
Furthermore, under Assumption \ref{hyp: L uniformly convex}, the price $\varpi$ and the control processes ${v^*}^i$, for $1\leq i \leq N$, solving Problem \ref{problem: problem}, are unique.
\end{theorem}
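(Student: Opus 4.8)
The plan is to recast Problem~\ref{problem: problem} as a single constrained minimization over the tuple $\mathbf{v}=(v^1,\dots,v^N)$ and then recover the price as the Lagrange multiplier of the balance constraint \eqref{eq:Balance condition}. Concretely, I would introduce the aggregate functional
\begin{equation*}
J(\mathbf{v}) \;=\; \sum_{i=1}^N \Ee\!\left[\int_0^T L(X^i_t,v^i_t)\,dt + \Psi(X^i_T)\right],
\end{equation*}
where $X^i$ solves \eqref{eq: Agent dynamics} with $X^i_0=x^i_0$, and minimize $J$ over the closed affine subspace $\mathcal{K}\subset (\mathds{H}_{\mathds{F}})^N$ cut out by $\frac1N\sum_i v^i_t = Q_t$. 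The crucial observation (exactly as in \cite{gomes2018mean}) is that on $\mathcal{K}$ the troublesome price term disappears: $\sum_i \Ee\int_0^T \varpi_t v^i_t\,dt = N\,\Ee\int_0^T \varpi_t Q_t\,dt$ is the \emph{same constant} for every admissible $\mathbf{v}\in\mathcal{K}$, so minimizing \eqref{eq: Functional per agent} summed over $i$ subject to \eqref{eq:Balance condition} is equivalent to minimizing $J$ over $\mathcal{K}$, with $\varpi$ no longer appearing.

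\emph{Existence.} I would run the direct method of the calculus of variations. First check $\mathcal{K}\neq\emptyset$ (the choice $v^i\equiv Q$ for all $i$ works, using $Q\in\mathds{H}_{\mathds{F}}$). By the coercivity assumption (Assumption~\ref{hyp: Coercivity}) the sublevel sets of $J$ are bounded in $(\mathds{H}_{\mathds{F}})^N$, so a minimizing sequence $\mathbf{v}^n$ admits a weakly convergent subsequence $\mathbf{v}^n\rightharpoonup \mathbf{v}^*$; since $\mathcal{K}$ is convex and closed it is weakly closed, hence $\mathbf{v}^*\in\mathcal{K}$. The map $v^i\mapsto X^i$ is affine and continuous, so $X^{i,n}_T\rightharpoonup X^{i,*}_T$ as well. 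Convexity of $L$ and $\Psi$ in the relevant variables (Assumption~\ref{hyp: L jconvex} and the nonnegativity/convexity hypotheses) gives weak lower semicontinuity of $J$ by Tonelli-type arguments, so $\mathbf{v}^*$ is a minimizer. This produces the optimal controls ${v^*}^i$.

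\emph{Recovering the price.} To extract $\varpi$ I would write first-order (Euler--Lagrange/KKT) conditions for the constrained problem. The constraint is linear with a surjective (onto $\mathds{H}_{\mathds{F}}$) constraint map, so a Lagrange multiplier $\varpi\in\mathds{H}_{\mathds{F}}$ exists; the stationarity condition of the Lagrangian $J(\mathbf{v}) + \Ee\int_0^T \varpi_t(\frac1N\sum_i v^i_t - Q_t)\,dt$ is precisely the Euler--Lagrange equation for each agent's \emph{unconstrained} problem with running cost $L(X^i,v^i)+\varpi v^i$, i.e.\ $\mathbf{v}^*$ together with this $\varpi$ solves Problem~\ref{problem: problem}. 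The main technical obstacle here is the justification of the multiplier in the infinite-dimensional, stochastic setting — one must verify the constraint qualification (surjectivity of $\mathbf{v}\mapsto \frac1N\sum v^i - Q$ as a bounded linear operator on the Hilbert space $(\mathds{H}_{\mathds{F}})^N$, which is immediate) and argue that the abstract multiplier, a priori only an element of the dual, is represented by a genuine $\mathds{F}$-adapted process in $\mathds{H}_{\mathds{F}}$; reflexivity/Hilbert structure of $\mathds{H}_{\mathds{F}}$ and the Riesz representation make this routine but it is the step deserving care.

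\emph{Uniqueness.} Under the uniform convexity of $L$ (Assumption~\ref{hyp: L uniformly convex}), $J$ is strictly convex (in fact strongly convex) on the affine space $\mathcal{K}$, because $v^i\mapsto X^i$ is injective, so the minimizer $\mathbf{v}^*=({v^*}^1,\dots,{v^*}^N)$ is unique. Uniqueness of $\varpi$ then follows from the first-order condition: any two admissible price processes yield, via the agents' optimality, the same ${v^*}^i$, and subtracting the two Euler--Lagrange relations forces $\varpi_t v = \varpi'_t v$ for the common optimal controls; more directly, the difference of two multipliers annihilates the (dense) range of the constraint operator's adjoint, hence is zero in $\mathds{H}_{\mathds{F}}$. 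I would present the uniqueness of $\varpi$ as a short corollary of strong convexity plus the non-degeneracy $\partial_v L > 0$ built into Assumption~\ref{hyp: L uniformly convex}.
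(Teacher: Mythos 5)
Your proposal is correct and follows the paper's overall strategy: substitute the balance condition to eliminate the price, minimize the aggregate functional over the closed convex constraint set by the direct method, and recover $\varpi$ as the Lagrange multiplier of the balance constraint. The genuine divergence is in how the multiplier is produced. You invoke the abstract Lagrange multiplier theorem: the constraint map $\bv\mapsto \frac{1}{N}\sum_i v^i$ is linear, bounded and surjective onto $\mathds{H}_{\mathds{F}}$, so a multiplier exists in the dual, identified with $\mathds{H}_{\mathds{F}}$ by Riesz. The paper instead constructs it by hand: it introduces adjoint pairs $(P^i,Z^i)$ solving the BSDEs \eqref{eq: N Backward P}, integrates by parts to rewrite the first variation of $I_N$ over zero-sum perturbations as $\langle \bP+L_v(\bX^*,\bv^*),\delta\bv\rangle_{\mathds{H}_{\mathds{F}}^N}=0$, and then applies Lemma \ref{lemma: Orthogonal to zero mean} (the orthogonal complement of $\{\bw:\sum_i w^i=0\}$ consists of vectors constant across agents) to conclude that $P^i+L_v({X^*}^i,{v^*}^i)$ is one and the same process $\Pi$ for every $i$. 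Your route is shorter and equally valid for existence and uniqueness of $\varpi$ (your uniqueness argument is the right one, though the difference of two multipliers lies in the kernel of the adjoint, i.e.\ is orthogonal to the range of the constraint map, rather than ``annihilating the range of the adjoint''); the paper's concrete construction buys the representation formula \eqref{eq:MultiplierFormula}, $\Pi=\frac{1}{N}\sum_i(P^i+L_v)$, which is what powers the explicit price SDEs in the linear-quadratic section. Two points to make explicit in a write-up: the G\^ateaux differentiability of $J$ needed for stationarity rests on the growth Assumptions \ref{hyp: Psi growth} and \ref{hyp: L Lx Lv} (differentiation under the expectation), and passing from the per-agent Euler--Lagrange condition back to optimality in Problem \ref{problem: problem} uses that the first-order condition is sufficient by convexity of $L$ and $\Psi$.
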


We prove this theorem in Section \ref{sec: NAgent problem}, where we formulate a problem independent of the price, but the constraint imposed by the balance condition is still present. Existence for this problem is obtained by the direct method in the calculus of variations, and we obtain a forward-backward characterization of optimizers, which allows identifying the price as the Lagrange multiplier corresponding to the balance constraint. 

The outline of the paper is as follows: In Section \ref{sec:AssumptionsNotation}, we introduce the main assumptions for the model as well as the notation for the function spaces. In Section \ref{sec: single agent problem}, we study the optimization problem that a representative agent solves under the assumption that the price is known, which corresponds to the optimization problem that all agents solve simultaneously in the the $N$-agent problem. Using the representative agent result, we prove the existence of a solution to the $N$ agent price formation problem, Problem \ref{problem: problem}, in Section \ref{sec: NAgent problem}. We specialize our results for a linear-quadratic structure of the model in Section \ref{Sec: LQ model}. Using optimal control techniques and an extended-state space approach, we obtain semi-explicit expressions for the price with finite $N$ agents and infinite agents. We discuss the convergence as $N\to \infty$ of the former to the latter. The general case is beyond the scope of this paper. The numerical computation of the price is discussed in Section \ref{sec:numerical}, where we present numerical results for a generic model of Section \ref{Sec: LQ model} and a calibrated model based on real data from the electricity grid in Spain.

\section{Assumptions and notation}\label{sec:AssumptionsNotation}
We consider natural assumptions in the context of the calculus of variations (see \cite{Dacorogna2}).
The following conditions are used to prove the existence of minimizers of \eqref{eq: Single agent min problem}, \eqref{eq: Nagent min problem}, and \eqref{eq: reduced Nagent min problem}.
In the following, we suppose the Lagrangian $L$ is non-negative.
%\begin{hyp}\label{hyp: L strictly convex}
%$L$ is strictly convex in the second component, that is, for all $x\in \Rr$
%\begin{equation}\label{hyp eq: L strictly convex}
%L(x,\lambda v + (1-\lambda) \tv) < \lambda L(x,v)+(1-\lambda) L(x,\tv) \quad \mbox{ for all } \lambda \in (0,1).
%\end{equation}
%\end{hyp}
\begin{hyp}\label{hyp: L jconvex}
The Lagrangian $L \in C^1(\Rr^2;\Rr^+\cup\{0\})$ is convex in $(x,v)$; that is, $(x,v)\mapsto L(x,v)$ is convex.
\end{hyp}

\begin{hyp}\label{hyp: Psi convex}
The terminal cost $\Psi \in C^1(\Rr;\Rr^+\cup\{0\})$ is convex.
\end{hyp}

Because we consider integrals w.r.t.
measure spaces, we require compositions of processes with functions to remain in the same class where the process is taken.
The following growth conditions guarantee this.
\begin{hyp}\label{hyp: Psi growth}
$\Psi \in C^1(\Rr;\Rr^+\cup\{0\})$ satisfies, for some $C>0$,
\[
	\Psi(x) \leq C(1+|x|^2), \quad \mbox{for all } x\in\Rr.
\]
Moreover, its derivative, which we denote by $\Psi'$, satisfies, for some $\tilde{C}>0$,
\[
	|\Psi'(x)| \leq \tilde{C} (1+|x|).
\]
\end{hyp}

\begin{hyp}\label{hyp: L Lx Lv}
$L \in C^1(\Rr^2;\Rr^+\cup\{0\})$, and there exists $ \tilde{\beta},C>0$ such that
\begin{align*}
	L(x,v) &\leq \tilde{\beta} (1+|v|^2), \quad \mbox{ for all }x \in \Rr,
	\\
	|L_x(x,v)|,|L_v(x,v)| &\leq C (1+|v|), \quad \mbox{for all } (x,v)\in\Rr^2.
\end{align*}
\end{hyp}

In convex optimization, a natural assumption to obtain the existence of minimizers is the coercivity condition.
\begin{hyp}\label{hyp: Coercivity}
(Coercivity) For some $\alpha>0$ and $\beta\geq 0$
\begin{equation*}
	\alpha |v|^2 - \beta \leq L(x,v), \quad \mbox{ for all } x,v \in \Rr.
\end{equation*}
\end{hyp}

To guarantee the uniqueness of minimizers, we consider next a strong form of convexity.
In turn, this assumption implies the coercivity condition (\cite{bauschke2017convex}, Corollary 11.17).

\begin{hyp}\label{hyp: L uniformly convex}
(Uniform convexity) For some $\theta>0$
\begin{equation*}
v \mapsto L(x,v)-\frac{\theta}{2} |v|^2 \quad \mbox{ is convex for all } x\in \Rr.
\end{equation*}
\end{hyp}

We introduce the Hamiltonian, $H$, the Legendre transform of $L$, by
\begin{equation}\label{def: Legendre trans}
	H(x,p)=\sup_{v\in\Rr}\left\{-pv-L(x,v)\right\}.
\end{equation}
Recall that when the map $v\mapsto L(x,v)$ is convex, $H(x,p)$ is well defined.
Furthermore, if $v\mapsto L(x,v)$ is strictly convex, $L\in C^2(\Rr^2;\Rr)$, and Assumption \ref{hyp: Coercivity} holds, there exists a unique value $v^*$ where the supremum is attained.
In addition,
\begin{equation}\label{eq: Aux Legendre T  v p}
	v^*=-H_p(x,p) ~ \mbox{if and only if} ~ p=-L(x,v^*), \mbox{ and hence }H(x,p)=-pv^*-L(x,v^*).
\end{equation}
See \cite{cannarsa}, Theorem A.
2.5, for the proof of the previous results.
For the Hamiltonian, we additionally require no more than linear growth of the gradient in the $p$ component, as we state next.

\begin{hyp}\label{hyp: H_p growth}
The Hamiltonian $H$ satisfies, for some $C>0$,
\[
	|H_p(x,p)| \leq C(1+|p|), \quad \mbox{for all } (x,p)\in\Rr^2.
\]
\end{hyp}

Now, we set up the notation.
Define the space $\mathds{H}_{\mathds{F}}$ as the set of processes $v:[0,T]\times \Omega \to \Rr$, that are measurable and adapted w.r.t.
$\mathds{F}$, and satisfy $\|v\|_{\mathds{H}_{\mathds{F}}}^2 < \infty$, where
\[
	\langle v , w \rangle_{\mathds{H}_{\mathds{F}}} := \Ee\left[ \int_0^T v_t w_t dt \right], \quad \|v\|_{\mathds{H}_{\mathds{F}}}^2:= \langle v , v \rangle_{\mathds{H}_{\mathds{F}}}.
\]
This expectation is w.r.t.
the measure induced by the Brownian motion.
$\mathds{H}_{\mathds{F}}$ is a Hilbert space (\cite{carmona2018probabilistic}, Remark 2.2.).
Given $v\in \mathds{H}_{\mathds{F}}$, the solution to \eqref{eq: Agent dynamics} with the initial condition $x_0\in \Rr$ is
\[
	X_t = x_0 + \int_0^t v_s ds.
\] 
Notice that $X \in \mathds{H}_{\mathds{F}}$ because $	\|X\|_{\mathds{H}_{\mathds{F}}}^2 \leq 2T|x_0|^2  + 2 T^2\|v\|_{\mathds{H}_{\mathds{F}}}^2$.
For our purposes, we consider trajectories with initial condition $x_0\in \Rr$.

For $N\in \Nn$, we define $\mathds{H}_{\mathds{F}}^N$, where $\bv = (v^1,\ldots,v^N)\in \mathds{H}_{\mathds{F}}^N$ provided $v^i \in \mathds{H}_{\mathds{F}}$, and
\[
	\langle \bv , \bw \rangle_{\mathds{H}_{\mathds{F}}^N} := \sum_{i=1}^N \langle v^i , w^i \rangle_{\mathds{H}_{\mathds{F}}}, \quad \|\bv\|_{\mathds{H}_{\mathds{F}}^N}^2:= \sum_{i=1}^N \|v^i\|_{\mathds{H}_{\mathds{F}}}^2.
\]

The analysis of Problem \ref{problem: problem} relies on the results for the optimization problem faced by a representative agent, which we consider in the next section.

\section{The optimization problem for a representative agent}\label{sec: single agent problem}
In this section, we assume that a price, $\varpi$, is given.
We derive a weak formulation for the Euler-Lagrange equation associated with the optimal control problem for a representative agent. We use this result in Section \ref{sec: NAgent problem} to study how the collective actions of the agents determine the price.

Let $x_0 \in \Rr$.
Given $v\in \mathds{H}_{\mathds{F}}$, consider the dynamics for the agent
\begin{equation}\label{eq: agent dynamics with initial condition} 
\begin{cases} dX_t=v_t dt ,~ t\in[0,T]\\ X_0=x_0.\end{cases}
\end{equation}
Given a price process $\varpi \in \mathds{H}_{\mathds{F}}$, the agent selects $v\in \mathds{H}_{\mathds{F}}$ aiming to reach
\begin{gather}\label{eq: Single agent min problem}
\inf_{v\in \mathds{H}_{\mathds{F}}} \Ee\left[ \int_0^T L(X_t,v_t) +\varpi_t  v_t~ dt + \Psi(X_T) \right]
\\
\mbox{subject to }  \quad X ~ \mbox{ solves } ~\eqref{eq: agent dynamics with initial condition}.
\nonumber
\end{gather}
Let 
\[
	I[v]:=\Ee\left[ \int_0^T L(X_t,v_t) +\varpi_t  v_t~ dt + \Psi(X_T) \right],
\]
where $X$ solves \eqref{eq: agent dynamics with initial condition} for $v$. In the following, we study the existence and uniqueness of solutions to \eqref{eq: Single agent min problem}. We adopt the direct method of the calculus of variations. Hence, we begin by proving that the functional $I[\cdot]$ is weakly lower semi-continuous.
\begin{proposition}\label{prop: wlsc of I}
Let $x_0\in\Rr$ and $\varpi \in \mathds{H}_{\mathds{F}}$.
Under Assumptions \ref{hyp: L jconvex}-%, \ref{hyp: Psi convex}, \ref{hyp: Psi growth}, and 
\ref{hyp: L Lx Lv}, the functional $I[\cdot]$ is weakly lower semi-continuous in $\mathds{H}_{\mathds{F}}$.
\end{proposition}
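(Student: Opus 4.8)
The plan is to establish weak lower semi-continuity of $I[\cdot]$ on the Hilbert space $\mathds{H}_{\mathds{F}}$ by the direct method: take a sequence $v_n \rightharpoonup v$ weakly in $\mathds{H}_{\mathds{F}}$ and show $I[v]\leq\liminf_n I[v_n]$. I would split the functional into its three parts, $I[v]=\Ee\int_0^T L(X_t,v_t)\,dt + \Ee\int_0^T \varpi_t v_t\,dt + \Ee[\Psi(X_T)]$, and treat each. The linear term $v\mapsto\Ee\int_0^T\varpi_t v_t\,dt = \langle \varpi,v\rangle_{\mathds{H}_{\mathds{F}}}$ is weakly continuous since $\varpi\in\mathds{H}_{\mathds{F}}$, so it passes to the limit directly.

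For the terminal cost, I would first observe that the map $v\mapsto X$ given by $X_t=x_0+\int_0^t v_s\,ds$ is affine and bounded from $\mathds{H}_{\mathds{F}}$ to $\mathds{H}_{\mathds{F}}$ (using the estimate $\|X\|^2_{\mathds{H}_{\mathds{F}}}\leq 2T|x_0|^2+2T^2\|v\|^2_{\mathds{H}_{\mathds{F}}}$ from the excerpt), hence weakly continuous; so $X_n\rightharpoonup X$ in $\mathds{H}_{\mathds{F}}$. More is true: for fixed $t$, $X_{n,t}=x_0+\int_0^T \mathds{1}_{[0,t]}(s)v_{n,s}\,ds$ converges weakly to $X_t$ in $L^2(\Omega)$, and in fact $\Ee\int_0^T X_{n,t}\,dt$-type quantities converge. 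To handle $\Ee[\Psi(X_T)]$ I would use that $\Psi$ is convex (Assumption \ref{hyp: Psi convex}) with quadratic growth and linearly-growing derivative (Assumption \ref{hyp: Psi growth}), so $w\mapsto\Ee[\Psi(w)]$ is convex and strongly continuous on $L^2(\Omega)$, hence weakly lower semi-continuous; combined with $X_{n,T}\rightharpoonup X_T$ weakly in $L^2(\Omega)$ this gives $\Ee[\Psi(X_T)]\leq\liminf_n\Ee[\Psi(X_{n,T})]$. The weak convergence $X_{n,T}\rightharpoonup X_T$ in $L^2(\Omega)$ follows because for any $Z\in L^2(\Omega,\Ff_T)$, $\Ee[Z X_{n,T}] = |x_0|\Ee[Z] + \Ee\big[Z\int_0^T v_{n,s}\,ds\big]$ and $s\mapsto Z$ (constant in $s$) times an indicator is a legitimate test element of $\mathds{H}_{\mathds{F}}$ after conditioning — this needs a small measurability argument.

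For the running cost $\Ee\int_0^T L(X_t,v_t)\,dt$, the standard tool is the Tonelli–Serrin type theorem: $L$ is continuous, non-negative, and convex in $(x,v)$ jointly (Assumption \ref{hyp: L jconvex}), so the functional $(x,v)\mapsto\int_{[0,T]\times\Omega} L(x_t,v_t)\,dt\,d\mathds{P}$ is weakly lower semi-continuous on $\mathds{H}_{\mathds{F}}\times\mathds{H}_{\mathds{F}}$ with respect to joint weak convergence. Since $(X_n,v_n)\rightharpoonup(X,v)$ jointly (the first component by the affine continuity above, the second by hypothesis), the result applies. The growth condition in Assumption \ref{hyp: L Lx Lv}, namely $L(x,v)\leq\tilde\beta(1+|v|^2)$ and $|L_x|,|L_v|\leq C(1+|v|)$, ensures $L(X_t,v_t)$ is integrable for $v\in\mathds{H}_{\mathds{F}}$ and lets one justify the convexity inequality $L(X_{n,t},v_{n,t})\geq L(X_t,v_t)+L_x(X_t,v_t)(X_{n,t}-X_t)+L_v(X_t,v_t)(v_{n,t}-v_t)$, integrate it, and pass to the limit: the two linear-in-increment terms vanish because $L_x(X,v),L_v(X,v)\in\mathds{H}_{\mathds{F}}$ (again by the growth bound) and $X_n-X, v_n-v\rightharpoonup 0$.

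The main obstacle I anticipate is the measure-theoretic bookkeeping needed to upgrade weak convergence in $\mathds{H}_{\mathds{F}}$ (which is a statement about the joint time–probability measure) to the pointwise-in-$t$ weak convergence in $L^2(\Omega)$ needed for the terminal term, and to legitimately use test functions of the form $\mathds{1}_{[0,t]}(s)\,Z(\omega)$ respecting the adaptedness constraint built into $\mathds{H}_{\mathds{F}}$. One clean way around this is to avoid isolating $X_{n,T}$ altogether: instead, write everything in terms of $v_n$ by integrating by parts, $\Psi(X_T)$ stays as is but one notes $X_{n,T}\to X_T$ weakly via duality against $\Ff_T$-measurable random variables, using that $Z\in L^2(\Omega,\Ff_T)$ viewed as a constant-in-time process lies in $\mathds{H}_{\mathds{F}}$. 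I would present the running-cost step via the explicit convexity-subgradient inequality rather than invoking an abstract Ioffe-type theorem, since the $C^1$ regularity and linear growth of the derivatives make this self-contained and keep the proof at the level of the calculus-of-variations prerequisites cited (\cite{Dacorogna2}).
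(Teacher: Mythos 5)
Your proof is correct, but it takes a genuinely different route from the paper's. The paper does not work with weak convergence at all: it first notes that $I[\cdot]$ is convex (Assumptions \ref{hyp: L jconvex} and \ref{hyp: Psi convex}) and then proves \emph{strong} lower semi-continuity, invoking the abstract fact that a convex, strongly l.s.c.\ functional on a Hilbert space is weakly l.s.c. Under strong convergence $v^k\to v$ one has $\|X^k-X\|_{\mathds{H}_{\mathds{F}}}\to 0$, so the paper can afford to use the subgradient inequality in $v$ alone and control the remaining $x$-increment by the crude bound $|\Ee\int_0^T L(X^k,v^k)-L(X,v^k)\,dt|\leq C\|X^k-X\|_{\mathds{H}_{\mathds{F}}}(T+\|v^k\|_{\mathds{H}_{\mathds{F}}})$ — a step that would fail under weak convergence and that your argument must avoid. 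You avoid it by using the \emph{joint} subgradient inequality of $L$ in $(x,v)$ at the limit point, so that every increment enters linearly and vanishes by weak convergence of $v_n$ and of $X_n$ (the latter via weak continuity of the bounded affine map $v\mapsto X$). What your approach buys is a self-contained direct proof that does not need the ``convex $+$ l.s.c.\ $\Rightarrow$ weakly l.s.c.''\ theorem; what it costs is exactly the measurability point you flag: to kill the terms $\Ee[\Psi'(X_T)(X_{n,T}-X_T)]$ and $\langle L_x(X,v),X_n-X\rangle$ you must test weak convergence of $v_n$ against processes that are adapted, and $\Psi'(X_T)$ viewed as constant in time is not. Your proposed fix is the right one and should be made explicit: write $\Ee[\Psi'(X_T)\int_0^T(v_{n,s}-v_s)\,ds]=\langle M,v_n-v\rangle_{\mathds{H}_{\mathds{F}}}$ with $M_s=\Ee[\Psi'(X_T)\mid\Ff_s]$ a square-integrable martingale, hence an element of $\mathds{H}_{\mathds{F}}$ (the term $\langle L_x(X,v),X_n-X\rangle$ needs no such care since $L_x(X,v)$ is already adapted and $X_n\rightharpoonup X$ in $\mathds{H}_{\mathds{F}}$). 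The paper sidesteps this entirely because under strong convergence Cauchy--Schwarz suffices without any adaptedness of the test process. Both arguments rest on the same assumptions and the same growth bounds, so either is acceptable; yours is marginally more work but proves the weak statement directly.
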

\begin{proof}
We will prove that $I[\cdot]$ is convex and lower semi-continuous, from which weak lower semi-continuity follows (\cite{kurdila2006convex} Theorem 7.2.5).
First, notice that, by Assumptions \ref{hyp: L jconvex} and \ref{hyp: Psi convex}, $I[\cdot]$ is convex.
To prove lower semi-continuity, let $(v^k)_{k\in\Nn}$ in $\mathds{H}_{\mathds{F}}$ be  such that $v^k$ converges to $v$.
Denote by $X^k$ and $X$ the solutions to \eqref{eq: agent dynamics with initial condition} with the controls $v^k$ and $v$, respectively.
Notice that, because the trajectories $X^k$ and $X$ have the same initial condition, we have 
\[
	\|X^k-X\|_{\mathds{H}_{\mathds{F}}}^2 \leq  2 T^2\|v^k-v\|_{\mathds{H}_{\mathds{F}}}^2.
\]
Therefore, $X^k$ converges to $X$.
The convexity in Assumptions \ref{hyp: L jconvex} and \ref{hyp: Psi convex} imply (\cite{bauschke2017convex}, Proposition 17.7)
\begin{equation}\label{eq: aux LowerSC of I 1}
	L_v(X_t,v_t)(v^k_t-v_t) + L(X_t,v_t) \leq L(X_t,v^k_t),
\end{equation}
\begin{equation}\label{eq: aux LowerSC of I 2}
	\Psi'(X_T)(X_T^k-X_T) + \Psi(X_T) \leq \Psi(X_T^k).
\end{equation}
Adding $L(X^k_t,v^k_t)-L(X_t,v^k_t)+\varpi_t v_t$ to both sides of \eqref{eq: aux LowerSC of I 1}, we get
\begin{align*}
	& L_v(X_t,v_t)(v^k_t-v_t) -L(X_t,v^k_t)+L(X^k_t,v^k_t) + L(X_t,v_t)+\varpi_t v_t
	\\
	& \leq \varpi_t (v_t-v^k_t) +L(X^k_t,v^k_t)+ \varpi_t v^k_t.
\end{align*}
Taking $\Ee[\int_0^T \cdot ~dt]$ in the previous inequality, $\Ee[\cdot]$ in \eqref{eq: aux LowerSC of I 2}, and adding both results, we obtain
\begin{align}\label{eq: aux LowerSC of I 3}
	&\langle L_v(X,v),v^k-v\rangle_{\mathds{H}_{\mathds{F}}} + \Ee\left[ \int_0^T L(X^k_t,v^k_t)-L(X_t,v^k_t)~ dt \right] + I[v] + \Ee\left[ \Psi'(X_T)(X^k_T-X_T)\right]\nonumber
	\\
	& \leq \langle \varpi, v-v^k\rangle_{\mathds{H}_{\mathds{F}}} +I[v^k].
\end{align}
By Assumption \ref{hyp: L Lx Lv}, $L_v(X,v) \in \mathds{H}_{\mathds{F}}$, hence
\begin{equation}\label{eq: aux LowerSC of I 4}
	\langle L_v(X,v),v^k-v\rangle_{\mathds{H}_{\mathds{F}}}\to 0.
\end{equation}
By Assumption \ref{hyp: Psi growth}, $\Psi'(X_T)\in \mathds{H}_{\mathds{F}}$, and using the representation $X_T^k - X_T = \int_0^T v^k_t - v_t~ dt$, we obtain 
\begin{equation}\label{eq: aux LowerSC of I 5}
	\Ee[\Psi'(X_T) (X^k_T - X_T)] \to 0.
\end{equation}
By Assumption \ref{hyp: L Lx Lv}, the Cauchy inequality, and the triangle inequality
\begin{align*}
	\left| \Ee\left[ \int_0^T L(X^k_t,v^k_t)-L(X_t,v^k_t)~ dt \right] \right| & \leq C \langle |X^k-X| , 1+|v^k|\rangle_{\mathds{H}_{\mathds{F}}}-
	\\ & \leq C \|X^k-X\|_{\mathds{H}_{\mathds{F}}}\left(T+\|v^k\|_{\mathds{H}_{\mathds{F}}}\right)\to 0.
\end{align*}
Using the previous inequality, \eqref{eq: aux LowerSC of I 4}, \eqref{eq: aux LowerSC of I 5}, and the assumption on $\varpi$, taking $\liminf$ in \eqref{eq: aux LowerSC of I 3}, we obtain 
\[
	I[v] \leq \liminf_{k \in \Nn} I[v^k].
\]
Therefore, $I[\cdot]$ is lower semi-continuous.
\end{proof}

\begin{proposition} \label{proposition: existence single agent}
Suppose that Assumptions \ref{hyp: L jconvex}-
%, \ref{hyp: Psi convex}, \ref{hyp: Psi growth}, \ref{hyp: L Lx Lv}, and 
\ref{hyp: Coercivity} hold.
Given an initial condition $x_0\in\Rr$ and a price process $\varpi \in \mathds{H}_{\mathds{F}}$, there exists an optimal control $v^* \in \mathds{H}_{\mathds{F}}$ that solves \eqref{eq: Single agent min problem}.
Furthermore, under Assumption \ref{hyp: L uniformly convex}, $v^*$ is unique.
\end{proposition}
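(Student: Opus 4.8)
The plan is to apply the direct method of the calculus of variations: Proposition~\ref{prop: wlsc of I} already supplies weak lower semicontinuity of $I[\cdot]$, so the only extra ingredient needed for existence is coercivity of $I[\cdot]$ on $\mathds{H}_{\mathds{F}}$; uniqueness will then follow from strict convexity under Assumption~\ref{hyp: L uniformly convex}.

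\emph{Existence.} First I would record that $I[v]$ is finite for every $v\in\mathds{H}_{\mathds{F}}$: the upper bounds in Assumptions~\ref{hyp: Psi growth} and~\ref{hyp: L Lx Lv}, the elementary estimate $\varpi_t v_t\leq\tfrac12(\varpi_t^2+v_t^2)$, and $\|X\|_{\mathds{H}_{\mathds{F}}}^2\leq 2T|x_0|^2+2T^2\|v\|_{\mathds{H}_{\mathds{F}}}^2$ give $I[v]<\infty$, so $m:=\inf I<\infty$. For a lower bound I would use the coercivity of $L$ (Assumption~\ref{hyp: Coercivity}), $\Psi\geq 0$, Cauchy--Schwarz and Young's inequality:
\[
I[v]\;\geq\;\alpha\|v\|_{\mathds{H}_{\mathds{F}}}^2-\beta T+\langle\varpi,v\rangle_{\mathds{H}_{\mathds{F}}}\;\geq\;\tfrac{\alpha}{2}\|v\|_{\mathds{H}_{\mathds{F}}}^2-\beta T-\tfrac{1}{2\alpha}\|\varpi\|_{\mathds{H}_{\mathds{F}}}^2 .
\]
Hence $m\in\Rr$ and $I[\cdot]$ is coercive. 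Then I would take a minimizing sequence $(v^k)$ with $I[v^k]\to m$; by the bound above it is bounded in $\mathds{H}_{\mathds{F}}$, which is a Hilbert space, hence reflexive, so along a subsequence $v^{k_j}\rightharpoonup v^*\in\mathds{H}_{\mathds{F}}$. Proposition~\ref{prop: wlsc of I} gives $I[v^*]\leq\liminf_j I[v^{k_j}]=m$, so $v^*$ solves \eqref{eq: Single agent min problem}.

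\emph{Uniqueness.} Now assume in addition Assumption~\ref{hyp: L uniformly convex}, and suppose $v^1\neq v^2$ both solve \eqref{eq: Single agent min problem}, with trajectories $X^1,X^2$ issued from the common point $x_0$. Since $I[\cdot]$ is convex (Assumptions~\ref{hyp: L jconvex},~\ref{hyp: Psi convex}), $\bar v:=\tfrac12(v^1+v^2)$ is also a minimizer; because the term $\langle\varpi,\cdot\rangle_{\mathds{H}_{\mathds{F}}}$ is additive along the segment, the convexity deficits of $L$ and of $\Psi$ must both vanish, in particular
\[
\Ee\!\left[\int_0^T\Big(\tfrac12 L(X^1_t,v^1_t)+\tfrac12 L(X^2_t,v^2_t)-L\big(\bar X_t,\bar v_t\big)\Big)\,dt\right]=0 .
\]
The integrand is nonnegative by joint convexity of $L$, hence it vanishes for a.e.\ $(t,\omega)$, i.e.\ $L$ is affine along the segment joining $(X^1_t,v^1_t)$ and $(X^2_t,v^2_t)$. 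Writing $L(x,v)=\big(L(x,v)-\tfrac{\theta}{2}v^2\big)+\tfrac{\theta}{2}v^2$ and using that $v\mapsto L(x,v)-\tfrac{\theta}{2}v^2$ is convex, together with the link $X^1_t-X^2_t=\int_0^t(v^1_s-v^2_s)\,ds$ and $X^1_0=X^2_0$, one concludes $v^1_t=v^2_t$ a.e., a contradiction. An equivalent route is to test the weak Euler--Lagrange equations (developed in this section) for $v^1$ and $v^2$ against $v^1-v^2$; after integrating the adjoint variable by parts this yields $\Ee\int_0^T\big(\nabla L(X^1_t,v^1_t)-\nabla L(X^2_t,v^2_t)\big)\cdot\big(X^1_t-X^2_t,\,v^1_t-v^2_t\big)\,dt+\Ee\big[(\Psi'(X^1_T)-\Psi'(X^2_T))(X^1_T-X^2_T)\big]=0$, both summands nonnegative, so the first integrand vanishes a.e., and uniform convexity in $v$ again forces $v^1=v^2$.

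\emph{Main obstacle.} The existence part is routine once coercivity is in hand. The subtle point is uniqueness: the hypotheses give only joint convexity of $L$ \emph{plus} strong convexity in the $v$-variable, not joint strong convexity, so one cannot simply subtract a $\tfrac{\theta}{2}\|v\|_{\mathds{H}_{\mathds{F}}}^2$ term and declare $I[\cdot]$ strictly convex. The argument must genuinely exploit the integrator dynamics $\dot X=v$ and the shared initial datum $X^1_0=X^2_0$ to transfer strong convexity in the control into strict convexity of the functional, and carrying this out with only $C^1$ regularity of $L$ (rather than $C^2$) calls for difference-quotient and monotonicity arguments in place of Hessian computations.
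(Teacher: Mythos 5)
Your proof is correct and follows essentially the same route as the paper: existence by the direct method (coercivity of $I[\cdot]$ obtained by absorbing the linear term $\langle\varpi,v\rangle_{\mathds{H}_{\mathds{F}}}$ into the quadratic lower bound from Assumption \ref{hyp: Coercivity}, then boundedness of a minimizing sequence, weak compactness of $\mathds{H}_{\mathds{F}}$, and Proposition \ref{prop: wlsc of I}), and uniqueness from strong convexity in $v$ applied to the midpoint control with the shared initial datum. The only difference is presentational: the paper dispatches uniqueness with the single quantitative estimate $I[\tfrac12(v^*+\tilde v)]\leq\tfrac12(I[v^*]+I[\tilde v])-\tfrac{\theta}{4}\|v^*-\tilde v\|_{\mathds{H}_{\mathds{F}}}^2$, so the ``affine along the segment'' analysis and the alternative Euler--Lagrange route you sketch in the uniqueness step (and the difficulties you anticipate under ``Main obstacle'') are more machinery than the argument actually requires.
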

\begin{proof}
To prove existence, we use the direct method in the calculus of variations.
By Assumption \ref{hyp: Coercivity}, we have
\begin{equation}\label{eq: aux eq Existence Single Agent}
	\alpha\left(v+\frac{\varpi}{2\alpha}\right)^2 - \frac{\varpi^2}{4\alpha}-\beta\leq L(x,v)+v\varpi.
\end{equation}
Since $\varpi \in \mathds{H}_{\mathds{F}}$, select $a$ and $b$ such that
\[
	0<a<\alpha, \quad \tfrac{1}{2(\alpha-a)}\|\varpi\|_{\mathds{H}_{\mathds{F}}}^2 \leq b.
\]
Then, for any $v \in \mathds{H}_{\mathds{F}}$, we have
\[
	0\leq (\alpha - a ) \Ee\left[\int_0^T \left(v_t+\tfrac{1}{2(\alpha-a)}\varpi_t\right)^2 dt\right] + b - \tfrac{1}{2(\alpha-a)}\|\varpi\|_{\mathds{H}_{\mathds{F}}}^2.
\]
The previous inequality, \eqref{eq: aux eq Existence Single Agent}, and $0\leq \Psi$ in Assumption \ref{hyp: Psi convex}, imply
\[
	a \|v\|_{\mathds{H}_{\mathds{F}}}^2 - b - \beta T \leq \Ee\left[ \int_0^T \alpha (v_t)^2 +\varpi_t v_t - \beta ~ dt \right]\leq I[v]
\]	
for all $v \in \mathds{H}_{\mathds{F}}$.
Therefore, $v \mapsto I[v]$ is coercive, and in particular, the infimum in \eqref{eq: Single agent min problem} is finite.
Let $(v^k)_{k\in\Nn}$ in $\mathds{H}_{\mathds{F}}$ be a minimizing sequence; that is,
\[
	\lim_{k\to+\infty} I[v^k]=\inf_{v\in \mathds{H}_{\mathds{F}}} I[v].
\]
By the coercivity of $I[\cdot]$, $(v^k)_{k\in\Nn}$ is bounded in $\mathds{H}_{\mathds{F}}$.
Recall that $\mathds{H}_{\mathds{F}}$ is a Hilbert space, so it is reflexive and, therefore, weakly precompact (\cite{E6}, Appendix D, Theorem 3).
Hence, there exists a subsequence, still denoted by $v^k$, that weakly converges to $v^*\in \mathds{H}_{\mathds{F}}$; that is, for all $w \in \mathds{H}_{\mathds{F}}$
\[
	\langle v^k , w \rangle_{\mathds{H}_{\mathds{F}}} \to \langle v^* , w \rangle_{\mathds{H}_{\mathds{F}}}.
\]
By Proposition \ref{prop: wlsc of I} 
\[
	I[v^*] \leq \liminf_{k\to+\infty} I[v_k] = \lim_{k\to+\infty} I[v_k] =\inf_{v\in \mathds{H}_{\mathds{F}}} I[v].
\]
Therefore, $v^*$ is a minimizer.

To prove uniqueness, denote by $X^*$ the solution of \eqref{eq: agent dynamics with initial condition} with the control variable $v^*$.
Assume that $\tilde{v} \in  \mathds{H}_{\mathds{F}}$ is a minimizer of \eqref{eq: Single agent min problem}, with trajectory $	\tilde{X}$ solving \eqref{eq: agent dynamics with initial condition} for $\tilde{v}$.
Set $Y=\tfrac{1}{2}(X^*+\tilde{X}) $, so that $Y$ satisfies \eqref{eq: agent dynamics with initial condition} for the control $\tfrac{1}{2}(v^*+\tilde{v})$.
Then, by Assumptions \ref{hyp: L jconvex} and \ref{hyp: L uniformly convex}, 
\[
	I\left[\tfrac{1}{2}(v^*+\tilde{v})\right] \leq \tfrac{1}{2} \left(I\left[v^*\right]+I\left[\tilde{v}\right]\right)-\tfrac{\theta}{4} \|v^* - \tilde{v}\|_{\mathds{H}_{\mathds{F}}}^2.
\]
It follows that $\tilde{v}=v^*$ in $\mathds{H}_{\mathds{F}}$, which implies that $\tilde{X}=X^*$.
\end{proof}
The following result provides a characterization of minimizers of $I[\cdot]$.
This condition is a weak form of the Euler-Lagrange equation.

\begin{proposition}\label{Prop: Weak EL Single agent}
Suppose that Assumptions \ref{hyp: Psi growth} and \ref{hyp: L Lx Lv}  hold.
Let $v^*\in \mathds{H}_{\mathds{F}}$ solve \eqref{eq: Single agent min problem}, with the corresponding trajectory $X^*$ solving \eqref{eq: agent dynamics with initial condition}.
Then $(X^*,v^*)$ satisfies
\begin{equation}\label{eq: EL Single agent}
\Ee\left[ \int_0^T \Big(L_x(X^*_t,v^*_t)\delta X_t  + \left(L_v(X^*_t,v^*_t) + \varpi_t\right) \delta v_t \Big)dt + \Psi'(X_T^*) \delta X_T\right]=0
\end{equation}
for all $\delta v \in \mathds{H}_{\mathds{F}}$, where 
\begin{equation}\label{eq: deltaX}
\delta X_t = \int_0^t \delta v_s ds.
\end{equation}
\end{proposition}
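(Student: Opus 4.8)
The plan is to derive \eqref{eq: EL Single agent} as the first-order optimality condition for the minimizer $v^*$ by performing a standard variation in the admissible class $\mathds{H}_{\mathds{F}}$. Fix an arbitrary $\delta v \in \mathds{H}_{\mathds{F}}$ and, for $\epsi \in \Rr$, consider the perturbed control $v^* + \epsi\, \delta v$, which again lies in $\mathds{H}_{\mathds{F}}$ since the space is linear. The associated trajectory is $X^* + \epsi\, \delta X$, where $\delta X$ is given by \eqref{eq: deltaX}, because the dynamics \eqref{eq: agent dynamics with initial condition} are linear in the control and the initial condition is unperturbed. Define $g(\epsi) := I[v^* + \epsi\, \delta v]$. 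Since $v^*$ is a minimizer, if $g$ is differentiable at $\epsi = 0$ then $g'(0) = 0$, and a direct computation (differentiating under the expectation and the time integral) yields exactly the left-hand side of \eqref{eq: EL Single agent}. So the whole content of the proof is the justification of differentiating $g$ at $0$ and the interchange of $\frac{d}{d\epsi}$ with $\Ee[\int_0^T \cdot\, dt]$.

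The key steps, in order, are: (i) write
\[
	\frac{g(\epsi) - g(0)}{\epsi} = \Ee\left[ \int_0^T \frac{L(X^*_t + \epsi\,\delta X_t, v^*_t + \epsi\,\delta v_t) - L(X^*_t, v^*_t)}{\epsi} + \varpi_t\,\delta v_t\, dt + \frac{\Psi(X^*_T + \epsi\,\delta X_T) - \Psi(X^*_T)}{\epsi}\right];
\]
(ii) apply the mean value theorem to the $L$ and $\Psi$ difference quotients, so that the integrand equals $L_x(X^*_t + s_t^\epsi \delta X_t, v^*_t + s_t^\epsi \delta v_t)\,\delta X_t + L_v(\cdots)\,\delta v_t + \varpi_t \delta v_t$ for some $s_t^\epsi \in (0,1)$, and similarly for $\Psi'$; (iii) as $\epsi \to 0$ the integrand converges pointwise (a.e.\ $t, \omega$) to the integrand of \eqref{eq: EL Single agent} by continuity of $L_x, L_v, \Psi'$ (Assumptions \ref{hyp: Psi growth}, \ref{hyp: L Lx Lv}); (iv) produce an $\epsi$-uniform dominating function: using the growth bounds $|L_x|, |L_v| \leq C(1 + |v|)$ and $|\Psi'(x)| \leq \tilde C(1+|x|)$, for $|\epsi| \leq 1$ the integrand is bounded in absolute value by $C(1 + |v^*_t| + |\delta v_t|)(|\delta X_t| + |\delta v_t|) + |\varpi_t \delta v_t| + \tilde C(1 + |X^*_T| + |\delta X_T|)|\delta X_T|$, and since $X^*, v^*, \delta X, \delta v, \varpi$ all lie in $\mathds{H}_{\mathds{F}}$ (recall $X \in \mathds{H}_{\mathds{F}}$ whenever $v \in \mathds{H}_{\mathds{F}}$, with $\|\delta X_T\|_{L^2(\Omega)}^2 \leq T\|\delta v\|_{\mathds{H}_{\mathds{F}}}^2$), this bound is integrable against $\Ee[\int_0^T \cdot\, dt]$ by Cauchy--Schwarz; (v) invoke dominated convergence to pass the limit inside, obtaining $g'(0) = $ the left side of \eqref{eq: EL Single agent}, and conclude $g'(0) = 0$ since $0$ is an interior minimum of $g$ on $\Rr$.

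The main obstacle is step (iv): making the dominating function genuinely independent of $\epsi$ and confirming it is in $L^1(dt\otimes d\mathds{P})$. The $L_v$ term is the delicate one, since a factor like $(1 + |v^*_t| + |\delta v_t|)|\delta v_t|$ is only $L^1$ in $(t,\omega)$ — precisely what Cauchy--Schwarz in $\mathds{H}_{\mathds{F}}$ gives — so one must be careful that the mean-value argument does not introduce an unbounded-in-$\epsi$ factor; bounding the intermediate point $v^*_t + s^\epsi_t \delta v_t$ by $|v^*_t| + |\delta v_t|$ for all $|\epsi|\le 1$ handles this cleanly. Once the dominating function is in place the rest is routine. (Alternatively, one can avoid dominated convergence by noting $I$ is convex and finite, hence continuous and Gateaux-differentiable on the Hilbert space, with the Gateaux derivative computed exactly as above; but the direct dominated-convergence argument is the most self-contained given the hypotheses already in force.)
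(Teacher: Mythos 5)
Your proposal is correct and follows essentially the same route as the paper: perturb the minimizer along $v^*+\epsi\,\delta v$, note the scalar function $\epsi\mapsto I[v^*+\epsi\,\delta v]$ has an interior minimum at $0$, and justify differentiation under $\Ee[\int_0^T\cdot\,dt]$ via the growth bounds of Assumptions \ref{hyp: Psi growth} and \ref{hyp: L Lx Lv}. The only difference is cosmetic: the paper delegates the interchange of limit and integral to a cited differentiation-under-the-integral theorem, whereas you spell out the mean-value-theorem plus dominated-convergence argument with an explicit $\epsi$-uniform dominating function.
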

\begin{proof}
Let $\epsilon \in\Rr $ and $\delta v\in \mathds{H}_{\mathds{F}}$.
Consider the control $v^*+\epsilon \delta v$ in \eqref{eq: agent dynamics with initial condition}.
The corresponding trajectory is $X^\epsilon_t = X^*_t + \epsilon \delta X_t$.
Because $v^*$ is a minimizer of $I[\cdot]$, the function
\[
	\epsilon \mapsto \Ee\left[\int_0^T\Big( L(X^\epsilon_t ,v^*_t+\epsilon \delta v_t) + \varpi_t (v^*_t+\epsilon \delta v_t)\Big) dt + \Psi(X^\epsilon_T) \right]
\]
has a minimum at $\epsilon=0$; that is,
\begin{align}\label{eq: D=0 single Agent}
	\left.\frac{d}{d\epsilon}\Ee\left[\int_0^T \Big(L(X^\epsilon_t ,v^*_t+\epsilon \delta v_t) + \varpi_t (v^*_t+\epsilon \delta v_t) \Big)dt + \Psi(X^\epsilon_T) \right]\right\rvert_{\epsilon=0}=0.
\end{align}
By Assumption \ref{hyp: L Lx Lv}, the partial derivatives of $L$ evaluated at $(X^\epsilon_t ,v^*_t+\epsilon \delta v_t)$ are integrable w.r.t.
$\Ee[\int_0^T \cdot ~dt]$.
From Assumption \ref{hyp: L Lx Lv} and Young's inequality, we have that
\[
	L(X^\epsilon_t ,v^*_t+\epsilon \delta v_t) + \varpi_t (v^*_t+\epsilon \delta v_t) \leq \tilde{\beta}+(\tilde{\beta}+\frac{1}{2})|v^*_t+\epsilon \delta v_t|^2 + \frac{1}{2}|\varpi_t|^2.
\]
In the same way, Assumption \ref{hyp: Psi growth} guarantees analogous conditions for $\Psi$ at $X^\varepsilon_T$.
Hence, we can differentiate under the integral sign in \eqref{eq: D=0 single Agent} (\cite{Billi}, Theorem 16.8), from which the result follows.
\end{proof}
The formulation presented in Proposition \ref{Prop: Weak EL Single agent} corresponds to the classical second-order characterization of minimizers given by the Euler-Lagrange equations.
As in Hamiltonian mechanics, this second-order characterization has an equivalent first-order formulation.
For this first-order characterization, we use the adjoint equation (see \eqref{eq: backward SDE for P}).

\begin{proposition}\label{proposition: Hamilton System}
Suppose $L\in C^1(\Rr^2;\Rr)$ and Assumptions \ref{hyp: Psi growth} and \ref{hyp: L uniformly convex} hold.
%Suppose that $H_p$ and $H_x$ are uniformly Lipschitz in $(x,p)$.
Given $x_0\in\Rr$, assume that $(v^*,X^*)$ solves \eqref{eq: EL Single agent}, where $v^*\in \mathds{H}_{\mathds{F}}$, and $X^*$ solves \eqref{eq: agent dynamics with initial condition} for $v^*$.
Then, the backward SDE
\begin{equation}\label{eq: backward SDE for P}
\begin{cases}
dP_t = - L_x(X^*_t,v_t^*) dt + Z_t dW_t
\\
P_T=\Psi'(X^*_T)
\end{cases}
\end{equation}
has a unique solution $(P,Z)$ on $[0,T]$, where $P,~ Z\in \mathds{H}_{\mathds{F}}$.
Furthermore, 
\begin{equation}\label{eq: Implicit control from H system}
	P = - L_v(X^*,v^*)-\varpi,
\end{equation}
and $(X^*,P,Z)$  solves, on $[0,T]$, the forward-backward SDE system
\begin{equation}\label{eq: Hamiltonian System}
	\begin{cases} 
	dX_t = -H_p(X_t,P_t+\varpi_t) dt
	\\
	X_0=x_0
	\\
	dP_t = H_x(X_t,P_t+\varpi_t)dt  + Z_t dW_t
	\\
	P_T=\Psi'(X_T).
	\end{cases}
\end{equation}
\end{proposition}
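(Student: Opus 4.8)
The plan is to first establish well-posedness of the backward SDE \eqref{eq: backward SDE for P}, then derive \eqref{eq: Implicit control from H system} from the weak Euler--Lagrange equation \eqref{eq: EL Single agent}, and finally recast the pair of equations in Hamiltonian form using the Legendre duality \eqref{eq: Aux Legendre T  v p}. For the first step, I would note that Assumption \ref{hyp: L Lx Lv} gives $|L_x(X^*_t,v^*_t)|\leq C(1+|v^*_t|)$, so the driver $t\mapsto L_x(X^*_t,v^*_t)$ belongs to $\mathds{H}_{\mathds{F}}$ since $v^*\in\mathds{H}_{\mathds{F}}$; likewise Assumption \ref{hyp: Psi growth} and $X^*\in\mathds{H}_{\mathds{F}}$ give $\Psi'(X^*_T)\in L^2(\Omega,\Ff_T)$. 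Since the driver here has no dependence on the unknowns $(P,Z)$, existence and uniqueness of a solution $(P,Z)\in\mathds{H}_{\mathds{F}}\times\mathds{H}_{\mathds{F}}$ follows from the standard linear (indeed, trivial-generator) BSDE theory, e.g.\ the martingale representation theorem applied to $\Ee\big[\Psi'(X^*_T)+\int_0^T L_x(X^*_s,v^*_s)\,ds\,\big|\,\Ff_t\big]$.

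Next I would prove \eqref{eq: Implicit control from H system}. Apply It\^o's formula to $t\mapsto P_t\,\delta X_t$, where $\delta X_t=\int_0^t\delta v_s\,ds$ is of bounded variation, to obtain
\[
\Ee\big[\Psi'(X^*_T)\,\delta X_T\big]
= \Ee\Big[\int_0^T P_t\,\delta v_t\,dt - \int_0^T L_x(X^*_t,v^*_t)\,\delta X_t\,dt\Big].
\]
Substituting this into \eqref{eq: EL Single agent}, the $L_x\,\delta X$ terms cancel and we are left with
\[
\Ee\Big[\int_0^T \big(L_v(X^*_t,v^*_t)+\varpi_t+P_t\big)\,\delta v_t\,dt\Big]=0
\quad\text{for all }\delta v\in\mathds{H}_{\mathds{F}}.
\]
Since $L_v(X^*,v^*)+\varpi+P\in\mathds{H}_{\mathds{F}}$ by Assumption \ref{hyp: L Lx Lv} and the assumption on $\varpi$, and $\delta v$ ranges over all of $\mathds{H}_{\mathds{F}}$, the fundamental lemma of the calculus of variations (nondegeneracy of the $\mathds{H}_{\mathds{F}}$ inner product) forces $P=-L_v(X^*,v^*)-\varpi$, which is \eqref{eq: Implicit control from H system}.

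Finally I would translate into Hamiltonian form. Assumption \ref{hyp: L uniformly convex} makes $v\mapsto L(x,v)$ strictly (uniformly) convex, so by \eqref{eq: Aux Legendre T  v p} the relation $P_t+\varpi_t=-L_v(X^*_t,v^*_t)$ is equivalent to $v^*_t=-H_p(X^*_t,P_t+\varpi_t)$, which combined with \eqref{eq: agent dynamics with initial condition} yields the forward equation $dX_t=-H_p(X_t,P_t+\varpi_t)\,dt$. For the backward equation, differentiating the identity $H(x,p)=-p\,(-H_p(x,p))-L(x,-H_p(x,p))$ in $x$ and using $p=-L_v(x,-H_p(x,p))$ gives the envelope relation $H_x(x,p)=-L_x(x,-H_p(x,p))$; evaluating at $(X^*_t,P_t+\varpi_t)$ gives $H_x(X^*_t,P_t+\varpi_t)=-L_x(X^*_t,v^*_t)$, so \eqref{eq: backward SDE for P} becomes $dP_t=H_x(X_t,P_t+\varpi_t)\,dt+Z_t\,dW_t$ with terminal condition $P_T=\Psi'(X^*_T)$. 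Collecting these gives \eqref{eq: Hamiltonian System}. The main technical point to be careful about is the It\^o expansion of $P_t\,\delta X_t$ and the justification that the stochastic integral term has zero expectation — this needs $P\in\mathds{H}_{\mathds{F}}$ together with $\delta X$ bounded on $[0,T]$ (which holds since $\delta v\in\mathds{H}_{\mathds{F}}$ implies $\delta X_T\in L^2$), and I expect this integrability bookkeeping, rather than any conceptual difficulty, to be where the real work lies; the passage through the Legendre transform is routine once \eqref{eq: Implicit control from H system} is in hand.
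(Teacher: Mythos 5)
Your proposal is correct and follows essentially the same route as the paper: existence and uniqueness of $(P,Z)$ from standard BSDE theory with a generator independent of $(P,Z)$, It\^o's product rule applied to $P_t\,\delta X_t$ and substituted into \eqref{eq: EL Single agent} to isolate $\langle L_v(X^*,v^*)+\varpi+P,\delta v\rangle_{\mathds{H}_{\mathds{F}}}=0$ for all $\delta v$, and the Legendre relations \eqref{eq: Aux Legendre T  v p} to pass to the Hamiltonian system. The only cosmetic difference is that you invoke Assumption \ref{hyp: L Lx Lv} to get square-integrability of the driver $L_x(X^*,v^*)$, which is not formally listed among the proposition's hypotheses, although the paper's own proof implicitly requires the same integrability to apply the cited BSDE existence theorem.
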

\begin{proof}
Assumption \ref{hyp: Psi growth} implies that $\Psi'(X^*_T)\in \mathds{L}^2_T(\Rr)$, and the continuity of $L_x$ guarantees the adaptability of $L_x(X^*_t,v_t^*)$ w.r.t. $\mathds{F}$. Notice that this term is independent of $P$ and $Z$.
Hence, Theorem 2.1 in \cite{BackwardSDE1997} guarantees the existence and uniqueness of $(P,Z)$ solving  \eqref{eq: backward SDE for P}.

Let $\delta v\in \mathds{H}_{\mathds{F}}$ and $\delta X$ according to \eqref{eq: deltaX}.
Then, because $\delta X_0 =0$, using \eqref{eq: backward SDE for P}, we have
\begin{align*}
&\Ee[\Psi'(X_T^*) \delta X_T] =\Ee[P_T\delta X_T] =\Ee\left[ \int_0^T d\left(P_t \delta X_t\right) \right]
\\
&=\Ee\left[ \int_0^T dP_t \delta X_t + P_t \delta v_t dt \right] =\Ee\left[ \int_0^T  -L_x(X^*_t,v_t^*)\delta X_t dt + P_t \delta v_t dt  + Z_t\delta X_t   dW_t \right].
\end{align*}
From the previous identity and \eqref{eq: EL Single agent}, we get
\begin{align*}
&\Ee[\Psi'(X_T^*) \delta X_T] =\Ee\left[ \int_0^T  (L_v(X_t^*,v_t^*)+\varpi_t + P_t)\delta v_t dt + Z_t\delta X_t   dW_t + \Psi'(X_T^*)\delta X_T \right].
\end{align*}
Recall that $Z,\delta X \in \mathds{H}_{\mathds{F}}$, which implies that $\Ee\left[ \int_0^T  Z_t\delta X_t   dW_t \right]=0$.
Hence, we conclude that, for all $\delta v \in \mathds{H}_{\mathds{F}}$,
\[
	\Ee\left[ \int_0^T  (L_v(X_t^*,v_t^*)+\varpi_t + P_t)\delta v_t dt\right]=\langle L_v(X^*,v^*)+\varpi + P , \delta v \rangle_{\mathds{H}_{\mathds{F}}}= 0.
\]	
Therefore, $P_t = - L_v(X_t^*,v_t^*)-\varpi_t$, from which Assumption \ref{hyp: L uniformly convex} and \eqref{eq: Aux Legendre T  v p} imply that $(X^*,P,Z)$ solves \eqref{eq: Hamiltonian System}.
\end{proof}
\begin{remark}
Notice that \eqref{eq: Hamiltonian System} is independent of the optimal control $v^*$.
Hence, if \eqref{eq: Hamiltonian System} has a unique solution and \eqref{eq: Implicit control from H system} is invertible, we obtain explicit expressions for the optimal control.
This is the case, for instance, when $L$ and $\Psi$ are quadratic, as we illustrate in Section \ref{Sec: LQ model}.
%The conclusion of Proposition \ref{proposition: Hamilton System} gives an implicit formula for the optimal control $v$.
% If $v\mapsto L_v(x,v)$ is invertible, for instance, when $L$ is quadratic on $v$, this relation is explicit.
% Hence, the solution $P$ of \eqref{eq: Hamiltonian System} provides the optimal control in the feedback form $v=v(P,X,\varpi)$.
\end{remark}
Next, we give conditions for the converse of Proposition \ref{proposition: Hamilton System} to hold.
\begin{proposition}\label{proposition: Hamilton System to EL}
Assume that $L\in C^2(\Rr^2;\Rr)$ is strictly convex in $v$, $\Psi \in C^1(\Rr)$, and Assumptions \ref{hyp: Coercivity}, and \ref{hyp: H_p growth} hold.
Let $(X^*,P,Z)$ solve \eqref{eq: Hamiltonian System}, where $X^*,~P,~Z\in  \mathds{H}_{\mathds{F}}$.
Then, $v^*:=-H_p(X^*,P+\varpi)$ and $X^*$ satisfy   \eqref{eq: EL Single agent}.
Furthermore,  $P = - L_v(X^*,v^*)-\varpi$.
\end{proposition}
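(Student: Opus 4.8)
The plan is to reverse the reasoning of Proposition~\ref{proposition: Hamilton System}. First I would set $v^*:=-H_p(X^*,P+\varpi)$ and check that $v^*\in\mathds{H}_{\mathds{F}}$: adaptedness is inherited from $X^*,P,\varpi$ through the continuous map $H_p$, while by Assumption~\ref{hyp: H_p growth} one has $|v^*_t|\leq C(1+|P_t|+|\varpi_t|)$, so that $v^*$ is square-integrable since $X^*,P,\varpi\in\mathds{H}_{\mathds{F}}$. Because $L\in C^2(\Rr^2;\Rr)$ is strictly convex in $v$ and Assumption~\ref{hyp: Coercivity} holds, the Legendre duality \eqref{eq: Aux Legendre T  v p} applies pointwise: $v^*_t=-H_p(X^*_t,P_t+\varpi_t)$ is equivalent to $P_t+\varpi_t=-L_v(X^*_t,v^*_t)$, which already establishes the ``furthermore'' claim $P=-L_v(X^*,v^*)-\varpi$.

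Next I would record the envelope identity. Using $L_{vv}>0$, the implicit function theorem makes $(x,p)\mapsto -H_p(x,p)$ a $C^1$ map, and differentiating $H(x,p)=-p\bigl(-H_p(x,p)\bigr)-L\bigl(x,-H_p(x,p)\bigr)$ with respect to $x$, together with $p=-L_v(x,-H_p(x,p))$, yields $H_x(x,p)=-L_x\bigl(x,-H_p(x,p)\bigr)$. Consequently the forward equation in \eqref{eq: Hamiltonian System} reads $dX^*_t=v^*_t\,dt$, $X^*_0=x_0$, so $X^*$ solves \eqref{eq: agent dynamics with initial condition} for $v^*$, and the backward equation becomes $dP_t=-L_x(X^*_t,v^*_t)\,dt+Z_t\,dW_t$, $P_T=\Psi'(X^*_T)$, i.e.\ exactly \eqref{eq: backward SDE for P}.

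To obtain \eqref{eq: EL Single agent}, I would then fix $\delta v\in\mathds{H}_{\mathds{F}}$, set $\delta X_t=\int_0^t\delta v_s\,ds$ as in \eqref{eq: deltaX} (so $\delta X_0=0$ and $\delta X\in\mathds{H}_{\mathds{F}}$), and apply the Itô product rule, noting $\delta X$ has no martingale part:
\[
d(P_t\delta X_t)=\bigl(-L_x(X^*_t,v^*_t)\delta X_t+P_t\delta v_t\bigr)\,dt+Z_t\delta X_t\,dW_t.
\]
Integrating over $[0,T]$, taking expectations, using $\delta X_0=0$, $P_T=\Psi'(X^*_T)$, and the fact that $Z,\delta X\in\mathds{H}_{\mathds{F}}$ to kill the stochastic integral (as in the proof of Proposition~\ref{proposition: Hamilton System}), gives
\[
\Ee[\Psi'(X^*_T)\delta X_T]=\Ee\Bigl[\int_0^T\bigl(-L_x(X^*_t,v^*_t)\delta X_t+P_t\delta v_t\bigr)\,dt\Bigr].
\]
Substituting $P_t=-L_v(X^*_t,v^*_t)-\varpi_t$ and rearranging produces precisely \eqref{eq: EL Single agent}.

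The integration by parts is routine; the points requiring care are (i) the verification $v^*\in\mathds{H}_{\mathds{F}}$, which is exactly where Assumption~\ref{hyp: H_p growth} is used, and (ii) the envelope identity $H_x=-L_x\circ(\mathrm{id},-H_p)$, which relies on the full strength of the $C^2$, strict-convexity-in-$v$ and coercivity hypotheses. I expect the main obstacle to be purely bookkeeping: keeping the Legendre sign conventions straight so that the forward and backward parts of \eqref{eq: Hamiltonian System} match the data of \eqref{eq: backward SDE for P} and \eqref{eq: agent dynamics with initial condition} term by term; once that alignment is in place, the argument closes as above.
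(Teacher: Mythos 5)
Your proposal is correct and follows essentially the same route as the paper's proof: use Assumption \ref{hyp: H_p growth} to place $v^*$ in $\mathds{H}_{\mathds{F}}$, invoke the Legendre duality \eqref{eq: Aux Legendre T  v p} to obtain $P=-L_v(X^*,v^*)-\varpi$ and $H_x=-L_x$, and then apply the It\^o product rule to $P_t\,\delta X_t$ together with the vanishing of the stochastic integral to recover \eqref{eq: EL Single agent}. The only difference is cosmetic (you substitute $P=-L_v-\varpi$ at the end, while the paper substitutes the two identities before integrating), so no further comment is needed.
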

\begin{proof}
From Assumption \ref{hyp: H_p growth}, we have $v^* \in \mathds{H}_{\mathds{F}}$. The first equation in \eqref{eq: Hamiltonian System} states that $X^*$ solves \eqref{eq: agent dynamics with initial condition} for the control $v^*$.
Then, by the strict convexity of $L$ in $v$ and Assumption \ref{hyp: Coercivity}, \eqref{eq: Aux Legendre T  v p} gives that $P = - L_v(X^*,v^*)-\varpi$ and $L_x(X^*,v^*) = -H_x(X^*,P+\varpi)$.
Take $\delta v \in \mathds{H}_{\mathds{F}}$ and $\delta X$, as in \eqref{eq: deltaX}, and multiply the previous identities to obtain
\begin{equation}\label{eq: aux eq 1}
	L_x(X^*,v^*)\delta X = -H_x(X^*,P+\varpi) \delta X, \quad \mbox{and} \quad  (L_v(X^*,v^*)+\varpi) \delta v = -P \delta v.
\end{equation}
Integrating on $[0,T]$ the relation $d\Big( P_t \delta X_t \Big) = dP_t \delta X_t + P_t \delta v_t dt$, recalling that $\delta X_0=0$, and replacing \eqref{eq: aux eq 1}, we have
\[
	P_T \delta X_T = \int_0^T dP_t \delta X_t -(L_v(X^*_t,v^*_t)+\varpi_t)\delta v_t dt.
\]
Using the third equation in \eqref{eq: Hamiltonian System}, the previous expression becomes
\[
	P_T \delta X_T = \int_0^T H_x(X^*_t,P_t+\varpi_t) \delta X_tdt +Z_t\delta X_t dW_t -(L_v(X^*_t,v^*_t)+\varpi_t)\delta v_t dt.
\]
Replacing the terminal condition for $P$ in \eqref{eq: Hamiltonian System}, using \eqref{eq: Aux Legendre T  v p}, taking expectation and recalling that $\Ee\left[ \int_0^T  Z_t\delta X_t   dW_t \right]=0$, we obtain
\begin{align*}
&\Ee\left[ \int_0^T L_x(X_t^*,v_t^*)\delta X_t + (L_v(X_t^*,v_t^*)+\varpi_t) \delta v_t ~ dt + \Psi'(X_T^*)\delta X_T \right] =0.
\end{align*}
Because $\delta v \in \mathds{H}_{\mathds{F}}$ is arbitrary, $(X^*,v^*)$ solves \eqref{eq: EL Single agent}.
\end{proof}
Notice that Proposition \ref{proposition: Hamilton System} guarantees the existence of solutions to the system \eqref{eq: backward SDE for P} and \eqref{eq: Hamiltonian System}, but it only states the uniqueness of solutions to the system \eqref{eq: backward SDE for P}.
The following proposition states a uniqueness result for \eqref{eq: Hamiltonian System}.
\begin{proposition}
Suppose that Assumptions \ref{hyp: L jconvex}, \ref{hyp: Psi convex}, \ref{hyp: Psi growth}, \ref{hyp: Coercivity} and \ref{hyp: H_p growth} hold.
Assume that $L$ is strictly convex in $v$.
Let $H$ be given by \eqref{def: Legendre trans}.
Suppose $H\in C^1(\Rr^2;\Rr)$,
%is concave in $x$
 and either 
\[
	\begin{array}{lcl}
	L \mbox{ is strictly convex in } (x,v) & \mbox{ or } & ~\Psi \mbox{ is strictly convex, and } H_p,~ H_x\\
	 &  &   \mbox{ are uniformly Lipschitz in } (x,p).
	\end{array}
\]
%{\color{red} $H$ is strictly convex in $p$, with $H_p$ Lipschitz in $x$, 
%
%or 
%
%$\Psi$ is strictly convex, $H_p$ and $H_x$ Lipschitz in $(x,p)$.}
Then, the solution to \eqref{eq: Hamiltonian System} is unique.
\end{proposition}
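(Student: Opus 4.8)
The plan is to prove uniqueness by a convexity/monotonicity argument that exploits the equivalence established in Propositions \ref{proposition: Hamilton System} and \ref{proposition: Hamilton System to EL} between solutions of \eqref{eq: Hamiltonian System} and critical points of the functional $I[\cdot]$. First I would observe that, under the stated hypotheses, Proposition \ref{proposition: Hamilton System to EL} shows that any solution $(X^*,P,Z)$ of \eqref{eq: Hamiltonian System} yields a pair $(X^*,v^*)$ with $v^*=-H_p(X^*,P+\varpi)$ solving the weak Euler--Lagrange equation \eqref{eq: EL Single agent}; moreover $v^*\in\mathds{H}_{\mathds{F}}$. So it suffices to show two things: (i) any $v^*$ solving \eqref{eq: EL Single agent} is in fact a minimizer of $I[\cdot]$ over $\mathds{H}_{\mathds{F}}$, and (ii) the minimizer, together with the adjoint data, is unique. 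For (i), convexity of $I[\cdot]$ (from Assumptions \ref{hyp: L jconvex} and \ref{hyp: Psi convex}, as in Proposition \ref{prop: wlsc of I}) means that a critical point of a convex functional is a global minimizer: for any $w\in\mathds{H}_{\mathds{F}}$, writing $\delta v = w - v^*$ and using the subgradient inequality for the convex integrand together with \eqref{eq: EL Single agent} gives $I[w]\ge I[v^*]$.

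Next I would address uniqueness of the minimizer in each of the two alternative cases. In the first case, $L$ strictly convex in $(x,v)$: suppose $v_1^*$ and $v_2^*$ both solve \eqref{eq: Hamiltonian System} (with trajectories $X_1^*,X_2^*$); by step (i) both minimize $I[\cdot]$, so $I[v_1^*]=I[v_2^*]=m$. Taking the midpoint $\bar v = \tfrac12(v_1^*+v_2^*)$ with trajectory $\bar X=\tfrac12(X_1^*+X_2^*)$, strict convexity of $(x,v)\mapsto L(x,v)$ forces $I[\bar v]<m$ unless $(X_1^*,v_1^*)=(X_2^*,v_2^*)$ a.e.\ in $[0,T]\times\Omega$; since $I[\bar v]\ge m$ we conclude $v_1^*=v_2^*$ and $X_1^*=X_2^*$. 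Then $P$ is determined by the BSDE \eqref{eq: backward SDE for P}, whose solution $(P,Z)$ is unique by Theorem 2.1 of \cite{BackwardSDE1997} (its driver $-L_x(X^*,v^*)$ and terminal datum $\Psi'(X^*_T)$ are now fixed), so the whole triple $(X^*,P,Z)$ is unique. In the second case, $\Psi$ strictly convex and $H_p, H_x$ uniformly Lipschitz, strict convexity of $L$ in $(x,v)$ is replaced by the observation that the midpoint argument still gives $\Psi(X_{1,T}^*)=\Psi(X_{2,T}^*)$ on the event where the strict-convexity gap in $\Psi$ would otherwise be positive, hence $X_{1,T}^*=X_{2,T}^*$ a.s.; combined with the (merely) convex integrand one gets that $L(X_1^*,v_1^*)=L(X_2^*,v_2^*)$ and, with strict convexity of $L$ in $v$ (assumed throughout) applied to the midpoint, that $v_1^*=v_2^*$, hence $X_1^*=X_2^*$; then uniqueness of $(P,Z)$ follows as before. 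The Lipschitz hypothesis on $H_p,H_x$ is what makes the reduction to the variational problem legitimate here: it guarantees $v^*=-H_p(X^*,P+\varpi)\in\mathds{H}_{\mathds{F}}$ and that the correspondence in Proposition \ref{proposition: Hamilton System to EL} applies.

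I expect the main obstacle to be the second case, where $L$ is not assumed strictly convex in $x$. There the midpoint/strict-convexity trick only directly pins down $v^*$ pointwise where the controls differ, and one must carefully chain: strict convexity of $L$ in $v$ gives equality of controls a.e., but to close the loop one needs that two minimizers sharing the same terminal state and the same running cost actually coincide; the clean way is to note that if $v_1^*\ne v_2^*$ on a set of positive measure, strict convexity of $v\mapsto L(x,v)$ makes $I[\bar v]<\tfrac12(I[v_1^*]+I[v_2^*])=m$, contradicting $I[\bar v]\ge m$ — so in fact strict convexity of $L$ in $v$ alone already forces $v_1^*=v_2^*$, and the role of the second alternative's extra hypotheses ($\Psi$ strictly convex, $H_p,H_x$ Lipschitz) is solely to ensure the solutions of \eqref{eq: Hamiltonian System} are genuinely minimizers in $\mathds{H}_{\mathds{F}}$ (via Proposition \ref{proposition: Hamilton System to EL} and step (i)) rather than to distinguish them. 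A secondary subtlety is verifying integrability so that all the expectations and the stochastic-integral-has-zero-expectation steps are valid — but these are exactly the growth bounds in Assumptions \ref{hyp: Psi growth} and \ref{hyp: H_p growth}, already used in the preceding propositions, so I would invoke them rather than redo the estimates.
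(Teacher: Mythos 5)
Your strategy is genuinely different from the paper's: you route everything through the variational problem (solutions of \eqref{eq: Hamiltonian System} are global minimizers of $I[\cdot]$ by convexity, then uniqueness of minimizers via a midpoint argument), whereas the paper works directly on the forward--backward system, applying It\^o's product rule to $(P_t-\tilde P_t)(X_t-\tilde X_t)$ and using monotonicity of $\nabla L$ and of $\Psi'$ to force the relevant expectations to vanish. For the first alternative ($L$ strictly convex in $(x,v)$) your argument is correct and is essentially the variational dual of the paper's computation: joint strict convexity makes the midpoint gap positive wherever $(X_1^*,v_1^*)\neq(X_2^*,v_2^*)$, so the trajectories and controls coincide, and then $(P,Z)$ is pinned down by $P=-L_v(X^*,v^*)-\varpi$ or by uniqueness for the decoupled BSDE.

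The second alternative, however, contains a genuine gap. Your central claim --- that ``strict convexity of $L$ in $v$ alone already forces $v_1^*=v_2^*$'' --- is false once the trajectories are allowed to differ. Equality in the midpoint inequality only says that $L$ is affine along the segment joining $(X_1^*,v_1^*)$ to $(X_2^*,v_2^*)$ (pointwise, a.e.), and strict convexity of $v\mapsto L(x,v)$ for each \emph{fixed} $x$ does not exclude such a flat segment when both coordinates vary: $L(x,v)=(v-x)^2$ is strictly convex in $v$ for every $x$ yet identically zero along the diagonal, and variants such as $L(x,v)=\alpha(v-x)^2+\psi(x+v)$ with $\psi$ convex, affine on an interval and quadratic at infinity are in addition coercive in the sense of Assumption \ref{hyp: Coercivity}. (Note also that Assumption \ref{hyp: L uniformly convex} is not among the hypotheses here, and even it only concerns convexity in $v$ for fixed $x$, so it would not close this hole.) Consequently, in the second case your argument delivers only $X_{1,T}^*=X_{2,T}^*$ from strict convexity of $\Psi$, and then stalls: you can conclude neither $v_1^*=v_2^*$ nor $X_1^*=X_2^*$ on $[0,T]$. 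This is precisely the point at which the paper invokes the uniform Lipschitz continuity of $H_p$ and $H_x$: once the terminal states agree, both triples solve a system in which \emph{both} components carry prescribed terminal data, and Theorem~2.1 of \cite{BackwardSDE1997} yields $(X,P,Z)=(\tilde X,\tilde P,\tilde Z)$. Your closing assertion that the Lipschitz hypotheses serve ``solely'' to legitimize the reduction to the variational problem therefore misreads their role --- they are the engine of the uniqueness argument in the second alternative, and your proof needs that FBSDE step (or an equivalent Gronwall/contraction argument) to be complete.
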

\begin{proof}
Let $(X,P,Z)$ and $(\tX,\tP,\tZ)$ solve \eqref{eq: Hamiltonian System}.
Let 
\begin{align*}
v=-H_p(X,P+\varpi), \quad 
H_x=H_x(X_t,P_t+\varpi_t), \quad
L_x=L_x(X,v), \quad 
L_v=L_v(X,v).
\\
\tilde{v}=-H_p(\tX,\tP+\varpi), \quad 
\tH_x=H_x(\tX_t,\tP_t+\varpi_t), \quad
\tilde{L}_x=L_x(\tilde{X},\tilde{v}), \quad
\tilde{K}_v=L_v(\tilde{X},\tilde{v}).
\end{align*}
By Assumption \ref{hyp: H_p growth}, $v,\tilde{v} \in \mathds{H}_{\mathds{F}}$.
Because of the strict convexity of $L$ in $v$ and Assumption \ref{hyp: Coercivity}, using \eqref{eq: Aux Legendre T  v p}, we have
\begin{equation}\label{eq: aux Uniqueness H system 0}
	L_v = -(P+\varpi), ~ \tilde{L}_v=-(\tP+\varpi), ~ H_x=-L_x, ~ \tilde{H}_x = -\tilde{L}_x.
\end{equation}
By Assumption \ref{hyp: Psi growth}, $\Psi'(X_T),\Psi'(\tilde{X}_T) \in \mathds{L}^2_T(\Rr)$ (Theorem 2.1, \cite{BackwardSDE1997}). Hence, using \eqref{eq: Hamiltonian System} and It\^o's product rule, we get
\begin{align}\label{eq: aux Uniqueness H system 1}
&\Ee \left[\Big(\Psi'(X_T)-\Psi'(\tX_T)\Big)\Big(X_T-\tX_T\Big)\right]
\\
&=\Ee \left[\int_0^T d\left( (P_t-\tP_t)(X_t-\tX_t)\right)\right] \nonumber
\\
&=\Ee \left[\int_0^T (H_x - \tH_x)(X_t-\tX_t)dt + (Z_t-\tZ_t)(X_t-\tX_t)dW_t - (P_t-\tP_t)(\tilde{v}_t-v_t)dt\right].\nonumber
\end{align}
Recalling that $\Ee \left[\int_0^T (Z_t-\tZ_t)(X_t-\tX_t)dW_t \right]=0$ and using \eqref{eq: aux Uniqueness H system 0}, we obtain
\begin{align*}
&\Ee \left[\int_0^T (H_x - \tH_x)(X_t-\tX_t)dt + (Z_t-\tZ_t)(X_t-\tX_t)dW_t - (P_t-\tP_t)(\tilde{v}_t-v_t)dt\right]
\\
&=\Ee \left[\int_0^T (\tilde{L}_x-L_x)(X_t-\tX_t)dt  - (\tilde{L}_v-L_v)(\tilde{v}_t-v_t)dt\right],
\end{align*}
and by Assumption \ref{hyp: L jconvex} (\cite{bauschke2017convex}, Proposition 17.7)
\begin{equation}\label{eq: aux Uniqueness H system 2}
	\Ee \left[\int_0^T (\tilde{L}_x-L_x)(X_t-\tX_t)dt  - (\tilde{L}_v-L_v)(\tilde{v}_t-v_t)dt\right]\leq 0.
\end{equation}
In the same way, the convexity of $\Psi$ (see Assumption \ref{hyp: Psi convex}) implies
\begin{align}\label{eq: aux Uniqueness H system 3}
0 \leq \Ee \left[\Big(\Psi'(X_T)-\Psi'(\tX_T)\Big)\Big(X_T-\tX_T\Big)\right].
\end{align}
Hence, from \eqref{eq: aux Uniqueness H system 1}, \eqref{eq: aux Uniqueness H system 2} and \eqref{eq: aux Uniqueness H system 3}, it follows that 
\begin{align}\label{eq:Aux Uniqueness Hamiltonian System sol}
	0 &\leq \Ee \left[\Big(\Psi'(X_T)-\Psi'(\tX_T)\Big)\Big(X_T-\tX_T\Big)\right] 
	\\
	&\leq \Ee \left[\int_0^T (\tilde{L}_x-L_x)(X_t-\tX_t)dt  - (\tilde{L}_v-L_v)(\tilde{v}_t-v_t)dt\right]\leq 0.\nonumber
\end{align}
Now we use a characterization of strict convexity provided in \cite{bauschke2017convex}, Proposition 17.10:  If $L$ is strictly convex in $(x,v)$, \eqref{eq:Aux Uniqueness Hamiltonian System sol} implies $X=\tX$ and $v=\tilde{v}$, from which \eqref{eq: aux Uniqueness H system 0} implies $P=\tP$ and $Z=\tZ$ follows.
%By the uniqueness result in Theorem \ref{Thm: PardouxPeng} for the BSDE 
%\[
%	\begin{cases} 
%	dP_t = H_x(X_t,P_t+\varpi_t)dt  + Z_t dW_t
%	\\
%	P_T=\Psi'(X_T) 
%	\end{cases}
%\]
%on $[0,T]$, we conclude that $Z=\tZ$.
On the other hand, if $\Psi$ is strictly convex, \eqref{eq:Aux Uniqueness Hamiltonian System sol} implies $\tX_T=X_T$.
Therefore, both $(X,P,Z)$ and $(\tX,\tP,\tZ)$ solve the BSDE
\begin{equation*}
	\begin{cases} 
	dX_t = -H_p(X_t,P_t+\varpi_t) dt
	\\
	X_T=\tX_T,
	\\
	dP_t = H_x(X_t,P_t+\varpi_t) dt + Z_t dW_t
	\\
	P_T=\Psi'(\tX_T)
	\end{cases}
\end{equation*}
for all $t\in[0,T]$.
The Lipschitz condition in both $H_p$ and $H_x$ allows us to use Theorem 2.1 in \cite{BackwardSDE1997} to conclude that $(X,P,Z)=(\tX,\tP,\tZ)$.
\end{proof}
Propositions \ref{Prop: Weak EL Single agent} and \ref{proposition: Hamilton System} show that the existence of solutions to \eqref{eq: Hamiltonian System} is a necessary condition for the existence of solutions to \eqref{eq: Single agent min problem}.
In the next result, we consider conditions for \eqref{eq: Hamiltonian System} to be sufficient.

\begin{proposition}%\label{proposition: Hamiltonian System solution is minimizer}
Suppose that Assumptions \ref{hyp: L jconvex}, \ref{hyp: Psi convex}, \ref{hyp: Coercivity}, and \ref{hyp: H_p growth} hold. Assume further that $L$ is strictly convex in $v$. Let $(X^*,P,Z)$ solve \eqref{eq: Hamiltonian System} and define $v^*=-H_p(X^*,P+\varpi)$.
Then $v^*$ solves \eqref{eq: Single agent min problem}.
\end{proposition}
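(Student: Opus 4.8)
The statement asserts that a solution of the Hamiltonian system \eqref{eq: Hamiltonian System} yields, via $v^*=-H_p(X^*,P+\varpi)$, a minimizer of the single-agent functional $I[\cdot]$. The natural strategy is to combine the convexity of $I[\cdot]$ (already established in Proposition \ref{prop: wlsc of I} under Assumptions \ref{hyp: L jconvex} and \ref{hyp: Psi convex}) with the first-order characterization from Proposition \ref{proposition: Hamilton System to EL}. First I would invoke Proposition \ref{proposition: Hamilton System to EL}: since $L\in C^2$ is strictly convex in $v$ (strict convexity in $v$ is implied here, as $L$ strictly convex in $v$ is assumed), $\Psi\in C^1$, and Assumptions \ref{hyp: Coercivity} and \ref{hyp: H_p growth} hold, that proposition gives that $v^*=-H_p(X^*,P+\varpi)\in\mathds{H}_{\mathds{F}}$, that $X^*$ solves \eqref{eq: agent dynamics with initial condition} for the control $v^*$, and that the pair $(X^*,v^*)$ satisfies the weak Euler--Lagrange identity \eqref{eq: EL Single agent}.

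The remaining task is to upgrade the first-order condition \eqref{eq: EL Single agent} to global optimality, which is exactly where convexity enters. Given any competitor $v\in\mathds{H}_{\mathds{F}}$ with trajectory $X$ solving \eqref{eq: agent dynamics with initial condition}, set $\delta v = v - v^*$ and $\delta X = X - X^*$, which is consistent with \eqref{eq: deltaX} since both trajectories share the initial condition $x_0$. By convexity of $L$ in $(x,v)$ (Assumption \ref{hyp: L jconvex}) and of $\Psi$ (Assumption \ref{hyp: Psi convex}), together with the subgradient inequality (\cite{bauschke2017convex}, Proposition 17.7),
\[
L(X_t,v_t) \geq L(X^*_t,v^*_t) + L_x(X^*_t,v^*_t)\,\delta X_t + L_v(X^*_t,v^*_t)\,\delta v_t,
\]
\[
\Psi(X_T) \geq \Psi(X^*_T) + \Psi'(X^*_T)\,\delta X_T.
\]
Adding the linear term $\varpi_t v_t = \varpi_t v^*_t + \varpi_t \delta v_t$, integrating, taking expectations, and using \eqref{eq: EL Single agent} to cancel the first-order terms, one obtains $I[v]\geq I[v^*]$. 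Since $v$ was arbitrary, $v^*$ is a minimizer and solves \eqref{eq: Single agent min problem}.

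**Main obstacle.** The only subtle point is integrability: one must check that the products $L_x(X^*_t,v^*_t)\,\delta X_t$, $L_v(X^*_t,v^*_t)\,\delta v_t$, and $\Psi'(X^*_T)\,\delta X_T$ are integrable with respect to $\Ee[\int_0^T\cdot\,dt]$ so that the convexity inequalities can be integrated and \eqref{eq: EL Single agent} applied. This follows from Assumption \ref{hyp: L Lx Lv}, which gives $L_x(X^*,v^*),L_v(X^*,v^*)\in\mathds{H}_{\mathds{F}}$ (since $v^*\in\mathds{H}_{\mathds{F}}$), from Assumption \ref{hyp: Psi growth}, which gives $\Psi'(X^*_T)\in\mathds{H}_{\mathds{F}}$, and from $\delta X,\delta v\in\mathds{H}_{\mathds{F}}$; the Cauchy--Schwarz inequality in $\mathds{H}_{\mathds{F}}$ then closes the argument. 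One should note that Assumptions \ref{hyp: L Lx Lv} and \ref{hyp: Psi growth} are needed here even though they are not in the hypothesis list as stated, so either they are implied by the standing assumptions or the argument should be arranged to use only the subgradient inequality pointwise plus a truncation/monotone-convergence step; I expect the cleanest route is simply to note that $v^*\in\mathds{H}_{\mathds{F}}$ and $X^*\in\mathds{H}_{\mathds{F}}$ already place us in the setting of Proposition \ref{Prop: Weak EL Single agent}, whose proof supplies exactly the required integrability.
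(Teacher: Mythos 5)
Your proposal is correct and follows essentially the same route as the paper: a convexity (verification) argument in which the subgradient inequalities for $L$ and $\Psi$ are combined with the first-order optimality condition coming from the Hamiltonian system. The only cosmetic difference is that you invoke Proposition \ref{proposition: Hamilton System to EL} to obtain the Euler--Lagrange identity \eqref{eq: EL Single agent} and cancel the first-order terms, whereas the paper inlines the same It\^o product rule computation on $P_t(X_t-X^*_t)$; your remark about the integrability hypotheses (Assumptions \ref{hyp: Psi growth} and \ref{hyp: L Lx Lv}) being implicitly used is a fair observation that applies equally to the paper's own argument.
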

\begin{proof}
By Assumption \ref{hyp: H_p growth}, $v^* \in \mathds{H}_{\mathds{F}}$.
Let $v \in \mathds{H}_{\mathds{F}}$ and $X$ solve \eqref{eq: agent dynamics with initial condition} for $v$.
From Proposition \ref{proposition: Hamilton System to EL}, we have $L_v(X^*,v^*) = - P - \varpi$.
By the convexity of $L$ in $(x,v)$, we have 
\[
		L(X^*_t,v^*_t) + L_x(X^*_t,v^*_t)(X_t-X^*_t)+L_v(X^*_t,v^*_t)(v_t-v^*_t) \leq L(X_t,v_t)
\]
By Assumption \ref{hyp: Coercivity} and the strict convexity of $L$ in $v$, from \eqref{eq: Aux Legendre T  v p}, we get
\begin{align*}
	&L_x(X^*_t,v^*_t)(X_t-X^*_t)+L_v(X^*_t,v^*_t)(v_t-v^*_t)
	\\
	&=-H_x(X^*_t,P_t+\varpi_t) (X_t-X^*_t)-(P_t+\varpi_t)(v_t-v^*_t).
\end{align*}
Hence,
\begin{equation}\label{eq: Aux Hamiltonian is Sufficient}
	L(X^*_t,v^*_t)+\varpi v_t^*	-H_x(X^*_t,P_t+\varpi_t)(X_t-X^*_t)-(P_t)(v_t-v^*_t)\leq L(X_t,v_t)+\varpi_t v_t.
\end{equation}
Using \eqref{eq: agent dynamics with initial condition} and \eqref{eq: Hamiltonian System}, we compute
\begin{align*}
	d\left(P_t (X_t-X^*_t)\right)&=dP_t(X_t-X^*_t) + P_t (dX_t-dX^*_t)
	\\
	&=H_x(X^*_t,P_t+\varpi_t)(X_t-X^*_t)dt  + Z_t(X_t-X^*_t) dW_t+P_t (v_t-v^*_t).
\end{align*}
Taking $\Ee[\int_0^T \cdot ~ dt]$ in the previous identity, recalling that $
\Ee\left[\int_0^T Z_t(X_t-X^*_t) dW_t\right]=0$, and using the terminal condition for $P$ in \eqref{eq: Hamiltonian System} and the initial condition for $X$ and $X^*$ in \eqref{eq: agent dynamics with initial condition}, we get
\begin{align*}
	\Ee\left[ P_T (X_T-X^*_T)\right]=\Ee\left[ \int_0^T H_x(X^*_t,P_t+\varpi_t)(X_t-X^*_t)dt+P_t (v_t-v^*_t)dt\right].
\end{align*}
From the previous identity, \eqref{eq: Aux Hamiltonian is Sufficient}, and the convexity of $\Psi$ (see Assumption \ref{hyp: Psi convex}), we conclude that 
\[
	I[v^*]\leq I[v]
\]
for arbitrary $v$.
The result follows.
\end{proof}
%----------------------------------------------------------------------------------------------------------------------------------

\section{The N-agent problem}\label{sec: NAgent problem}
Here, we introduce a minimization problem that aggregates the costs of all agents considered in the previous section and is constrained by the total supply.
By aggregating the costs of all agents, we obtain an equivalent variational problem independent of the price.
We show the existence of minimizers and obtain the price as the Lagrange multiplier for the supply constraint.

Let $\bx_0 \in \Rr^N$ be the initial configuration of $N$ agents.
Given the controls $\bv \in \mathds{H}_{\mathds{F}}^N$, consider the dynamics for $N$ agents
\begin{equation}\label{eq: N agents dynamics with initial condition}
\begin{cases} d\bX_t=\bv_t dt ,~ t\in[0,T]\\ \bX_0=\bx_0,\end{cases}
\end{equation}
where $\bX=(X^1,\ldots,X^N)$.
If the price is known, the functional of a representative agent in the minimization problem \eqref{eq: Single agent min problem} depends on the actions of other agents through the price.
To solve \eqref{eq: Single agent min problem}, each agent looks for its optimal control $v^*$, and this control is coupled with the control of other agents through the balance condition \eqref{eq:Balance condition}.
Hence, as long as the balance condition is satisfied, the vector $\bv^*:=({v^*}^1,\ldots,{v^*}^i)$, consisting of the optimal controls for each agent, is an optimal control for the following minimization problem
\begin{gather*}
\inf_{\bv\in \mathds{H}_{\mathds{F}}^N} \frac{1}{N} \sum_{i=1}^N \Ee\left[\int_0^T L(X_t^i,v^i_t) + \varpi_t v_t^i~ dt + \Psi(X^i_T) \right]
\\
\mbox{subject to }  \quad \bX ~ \mbox{ solves } ~\eqref{eq: N agents dynamics with initial condition}.
\nonumber
\end{gather*}
Reciprocally, as long as the balance condition is satisfied, any optimal control $\bv^*$ of the previous minimization problem provides, through its components ${v^*}^i$, for $1\leq i \leq N$, an optimal control for \eqref{eq: Single agent min problem}.
Therefore, Problem \ref{problem: problem} is equivalent to the following
\begin{gather}\label{eq: Nagent min problem}
\inf_{\bv\in \mathds{H}_{\mathds{F}}^N} \frac{1}{N} \sum_{i=1}^N \Ee\left[\int_0^T L(X_t^i,v^i_t) + \varpi_t v_t^i~ dt + \Psi(X^i_T) \right]
\\
\mbox{subject to }  \quad \frac{1}{N}\sum\limits_{i=1}^N v^i=Q, ~\mbox{ and } ~ \bX ~ \mbox{ solves } ~\eqref{eq: N agents dynamics with initial condition}.
\nonumber
\end{gather}
Substituting the balance condition into the expression to minimize in \eqref{eq: Nagent min problem}, we get 
\begin{equation}\label{eq: Aux N Agent reduced}
\frac{1}{N} \sum_{i=1}^N \Ee\left[\int_0^T L(X_t^i,v^i_t) + \varpi_t Q_t ~ dt + \Psi(X^i_T) \right].
\end{equation}
Let
\begin{equation*}
%\label{def: N agent functional}
	I_N[\bv]:=\frac{1}{N} \sum_{i=1}^N \Ee\left[\int_0^T L(X_t^i,v^i_t) dt + \Psi(X^i_T) \right].
\end{equation*}
Since the expression $\langle \varpi, Q\rangle_{\mathds{H}_{\mathds{F}}}$ in \eqref{eq: Aux N Agent reduced} is independent of $\bv$, we can drop this term and obtain that \eqref{eq: Nagent min problem} is equivalent to the following problem

\begin{problem}\label{problem:problem no price}
%Let $N$ be the number of agents.
% Let the supply, $Q$, be a stochastic process adapted to $\mathds{F}$.
% Let $L \in C^1(\Rr^2;\Rr)$ be a non-negative Lagrangian, and $\Psi \in C^1(\Rr)$ be a non-negative terminal cost.
%Assume that at time $t=0$, each agent $i$ owns a quantity $x_0^i\in\Rr$ of the commodity.

Find a vector of control processes $\bv^*\in\mathds{H}_{\mathds{F}}^N$ that attains the following
\begin{gather}\label{eq: reduced Nagent min problem}
\inf_{\bv\in \mathds{H}_{\mathds{F}}^N}~ I_N[\bv]
\\
\mbox{subject to }  \quad \frac{1}{N}\sum\limits_{i=1}^N v^i=Q, ~ \mbox{ and } ~ \bX ~ \mbox{ solves } ~\eqref{eq: N agents dynamics with initial condition}.\nonumber
\end{gather}

\end{problem}

%\begin{gather}\label{eq: reduced Nagent min problem}
%I_N[\bv^*]=\inf_{\bv\in \mathds{H}_{\mathds{F}}^N}~ I_N[\bv]
%\\
%\mbox{subject to }  \quad \frac{1}{N}\sum\limits_{i=1}^N v^i=Q, ~ \mbox{ and } ~ \bX ~ \mbox{ solves } ~\eqref{eq: N agents dynamics with initial condition}.\nonumber
%\end{gather}
The next proposition shows that this problem has a solution; that is, there exists $\bv^*\in \mathds{H}_{\mathds{F}}^N$ such that $I_N[\bv^*]$ attains the infimum in \eqref{eq: reduced Nagent min problem} and satisfies the constraints.

\begin{proposition}\label{Prop: Existence N constrained}
Let $Q\in \mathds{H}_{\mathds{F}}$.
Suppose that Assumptions \ref{hyp: L jconvex}, \ref{hyp: Psi convex}, \ref{hyp: Psi growth}, \ref{hyp: L Lx Lv}, and \ref{hyp: Coercivity} hold.
Given an initial condition $\bx_0 \in \Rr^N$, there exists an optimal control $\bv^* \in \mathds{H}_{\mathds{F}}^N$ that solves Problem \ref{problem:problem no price}.
Furthermore, under Assumption \ref{hyp: L uniformly convex}, $\bv^* $ is unique.
\end{proposition}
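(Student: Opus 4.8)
The plan is to run the direct method of the calculus of variations on the Hilbert space $\mathds{H}_{\mathds{F}}^N$, restricted to the feasible set
\[
\mathcal{K}:=\Big\{\bv\in\mathds{H}_{\mathds{F}}^N:\ \tfrac1N\textstyle\sum_{i=1}^N v^i=Q\Big\},
\]
paralleling the proof of Proposition~\ref{proposition: existence single agent}, the only genuinely new ingredient being the affine constraint. First I would record the structural facts about $\mathcal{K}$: it is nonempty, since $v^i\equiv Q\in\mathds{H}_{\mathds{F}}$ for every $i$ is feasible; and since $\bv\mapsto\frac1N\sum_i v^i$ is a bounded linear map $\mathds{H}_{\mathds{F}}^N\to\mathds{H}_{\mathds{F}}$, the set $\mathcal{K}$ is a closed affine subspace, hence convex and weakly closed. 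Using the growth bounds in Assumptions~\ref{hyp: Psi growth} and~\ref{hyp: L Lx Lv} together with $X^i\in\mathds{H}_{\mathds{F}}$ whenever $v^i\in\mathds{H}_{\mathds{F}}$, one checks $I_N[(Q,\dots,Q)]<\infty$, so the infimum in \eqref{eq: reduced Nagent min problem} is not $+\infty$.

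The two analytic properties needed for the direct method transfer from the single-agent setting because $I_N$ decouples across agents: writing $J[v]:=\Ee[\int_0^T L(X_t,v_t)\,dt+\Psi(X_T)]$ for the single-agent functional with price $\varpi\equiv0\in\mathds{H}_{\mathds{F}}$, we have $I_N[\bv]=\frac1N\sum_{i=1}^N J[v^i]$. Coercivity: Assumption~\ref{hyp: Coercivity} and $\Psi\geq0$ (Assumption~\ref{hyp: Psi convex}) give $J[v]\geq\alpha\|v\|_{\mathds{H}_{\mathds{F}}}^2-\beta T$, hence $I_N[\bv]\geq\frac\alpha N\|\bv\|_{\mathds{H}_{\mathds{F}}^N}^2-\beta T$; so $I_N$ is coercive on $\mathds{H}_{\mathds{F}}^N$ (here, unlike the single-agent case, there is no price term to absorb), and in particular $\inf_{\mathcal{K}}I_N$ is finite. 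Weak lower semicontinuity: Proposition~\ref{prop: wlsc of I} (applied with $\varpi\equiv0$) shows each $J$ is weakly lower semicontinuous on $\mathds{H}_{\mathds{F}}$; since weak convergence in $\mathds{H}_{\mathds{F}}^N$ entails weak convergence of each component, $\bv\mapsto J[v^i]$ is weakly lower semicontinuous, and a finite sum of such functionals is too. Now take a minimizing sequence $\bv^k\in\mathcal{K}$; coercivity makes it bounded, reflexivity of $\mathds{H}_{\mathds{F}}^N$ yields a weakly convergent subsequence $\bv^k\rightharpoonup\bv^*$, weak closedness of $\mathcal{K}$ gives $\bv^*\in\mathcal{K}$, and weak lower semicontinuity gives $I_N[\bv^*]\leq\liminf_k I_N[\bv^k]=\inf_{\mathcal{K}}I_N$; thus $\bv^*$ solves Problem~\ref{problem:problem no price}.

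For uniqueness under Assumption~\ref{hyp: L uniformly convex}, I would use the midpoint argument from the proof of Proposition~\ref{proposition: existence single agent}: there one obtains $J[\frac12(v+w)]\leq\frac12(J[v]+J[w])-\frac\theta4\|v-w\|_{\mathds{H}_{\mathds{F}}}^2$ (the linear price term is irrelevant to this inequality), hence, summing over agents,
\[
I_N\big[\tfrac12(\bv+\bw)\big]\leq\tfrac12\big(I_N[\bv]+I_N[\bw]\big)-\tfrac{\theta}{4N}\|\bv-\bw\|_{\mathds{H}_{\mathds{F}}^N}^2 .
\]
If $\bv^*$ and $\tilde{\bv}$ both solve Problem~\ref{problem:problem no price}, their midpoint still lies in the convex set $\mathcal{K}$, and plugging it into the inequality forces $\|\bv^*-\tilde{\bv}\|_{\mathds{H}_{\mathds{F}}^N}=0$, that is, $\bv^*=\tilde{\bv}$ (and then the common trajectory is unique as well). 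The argument is essentially routine; the only point requiring care beyond the single-agent case is verifying that $\mathcal{K}$ is weakly closed and that the whole minimization is performed inside it, so that the weak limit of the minimizing sequence inherits feasibility — which is precisely why it is convenient to view $\mathcal{K}$ as the preimage of a point under a bounded linear map.
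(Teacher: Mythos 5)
Your proposal is correct and follows essentially the same route as the paper: the direct method on the nonempty, convex, closed (hence weakly closed) constraint set, coercivity from Assumption \ref{hyp: Coercivity}, weak lower semicontinuity transferred from Proposition \ref{prop: wlsc of I} (with $\varpi\equiv 0$), and the midpoint argument under Assumption \ref{hyp: L uniformly convex} for uniqueness. The only cosmetic difference is that you obtain weak closedness by viewing the constraint as the preimage of a point under a bounded linear map, whereas the paper verifies strong closedness directly and invokes Mazur's theorem; both are standard and equivalent here.
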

\begin{proof}
We follow the direct method in the calculus of variations to prove existence.
Define the set of admissible controls
\begin{equation*}%\label{def: Admissible trajectories}
	\mathcal{C}=\left\{ \bv \in \mathds{H}_{\mathds{F}}^N:~  \frac{1}{N}\sum_{i=1}^N v^i=Q,\right\}.
\end{equation*}
Notice that $\mathcal{C}$ is a convex set.
Also, this set is not empty because $v^i=Q$ for $1\leq i \leq N$ is an element of $\mathcal{C}$.
The set $\mathcal{C}$ is also closed because any sequence $(\bv^k)_{k\in\Nn}$ in $\mathcal{C}$ that converges to $\bv$ in $\mathds{H}_{\mathds{F}}^N$ satisfies
\begin{align*}
	\left\Vert \frac{1}{N} \sum_{i=1}^N v^i - Q \right\Vert^2_{\mathds{H}_{\mathds{F}}} \leq \frac{2}{N} \sum_{i=1}^N \left\Vert v^i - v^{i,k} \right\Vert^2_{\mathds{H}_{\mathds{F}}} \to 0.
\end{align*}
By a similar argument to that used in the proof of Proposition \ref{proposition: existence single agent}, Assumption \ref{hyp: Coercivity} implies that
\[
	\frac{\alpha}{N} \|\bv\|^2_{\mathds{H}_{\mathds{F}}^N} - \beta T \leq I_N[{\bv}],
\]
for all $\bv\in \mathds{H}_{\mathds{F}}^N$.
Therefore, $\bv \mapsto I_N[\bv]$ is coercive and bounded from below.
In particular, the infimum in \eqref{eq: reduced Nagent min problem} is finite.
Let $(\bv^k)_{k\in\Nn}$ in $\mathcal{C}$ be a minimizing sequence of \eqref{eq: reduced Nagent min problem}; that is,
\[
	\lim_{k\to+\infty} I_N[\bv^k]=\inf_{\bv\in \mathcal{C}} I_N[\bv].
\]
By coercivity of $I_N[\cdot]$, $(\bv^k)_{k\in\Nn}$ is bounded in $\mathds{H}_{\mathds{F}}^N$.
Because $\mathds{H}_{\mathds{F}}$ is a Hilbert space, $\mathds{H}_{\mathds{F}}^N$ is also a Hilbert space.
Hence, let $\bv^*\in \mathds{H}_{\mathds{F}}^N$ be a control for which there is a subsequence, still denoted by $\bv^k$, that weakly converges to $\bv^*$.
Since $\mathcal{C}$ is convex and closed, by Mazur's theorem (\cite{kurdila2006convex}, Theorem 7.2.4), it is weakly closed.
Therefore, $\bv^*\in \mathcal{C}$.
Arguing as in Proposition \ref{prop: wlsc of I} using Assumptions \ref{hyp: L jconvex}, \ref{hyp: Psi convex}, \ref{hyp: Psi growth}, and \ref{hyp: L Lx Lv}, we have that $I_N$ is weakly lower semi-continuous.
Hence,
\[
	I_N[\bv^*] \leq \liminf_{k\to+\infty} I_N[\bv^k] = \lim_{k\to+\infty} I_N[\bv^k] =\inf_{\bv\in\mathcal{C}} I_N[\bv].
\]
Accordingly, $\bv^*$ is a minimizer.
The uniqueness of $\bv^*$  follows from Assumption \ref{hyp: L uniformly convex} and a similar argument to the one in the proof of Proposition \ref{proposition: existence single agent}.
%To prove it is unique, assume there exists another minimizer $\tilde{\bv} \in  \mathds{H}_{\mathds{F}}^N$.
%Set $\bY=\tfrac{1}{2}(\bX+\tilde{\bX}) $, so $	d\bY_t = \tfrac{1}{2}(\bv_t+\tilde{\bv}_t)dt$ and 
%\[
%	I_N\left[\tfrac{1}{2}(\bv+\tilde{\bv})\right] \leq \tfrac{1}{2} \left(I_N\left[\bv\right]+I_N\left[\tilde{\bv}\right]\right)-\tfrac{\theta}{4} \|\bv - \tilde{\bv}\|_{\mathds{H}_{\mathds{F}}^N}^{2/N}
%\]
%so it follows that $\bv^*=\tilde{\bv}^*$ in $\mathds{H}_{\mathds{F}}^N$, which implies that $\bX^*=\tilde{\bX}^*$.
\end{proof}
The following lemma characterizes the orthogonal complement of the elements in $ \mathds{H}_{\mathds{F}}^N$, whose entries add to zero.
We will use this lemma to prove the existence of a Lagrange multiplier.
\begin{lemma}\label{lemma: Orthogonal to zero mean}
Let  $\mathcal{Z}=\left\{ \bw\in \mathds{H}_{\mathds{F}}^N:~  \sum_{i=1}^N w^i =0 \right\}$.
Denote by $\mathcal{Z}^\perp$ the orthogonal complement of the set $\mathcal{Z}$ with respect to $\langle \cdot , \cdot \rangle_{\mathds{H}_{\mathds{F}}^N} $.
Then, $\mathcal{Z}^\perp=\left\{ \bv\in \mathds{H}_{\mathds{F}}^N:~ \bv=\bar{v} \mathds{1}_N,~ \bar{v} \in \mathds{H}_{\mathds{F}} \right\}$, where $\mathds{1}_N=(1,\ldots,1)\in\Rr^N$.
%
%$\bv \in \mathds{H}_{\mathds{F}}^N$ be such that 
%\[
%	\langle \bv , \delta \bv \rangle_{\mathds{H}_{\mathds{F}}^N} =0 ~ \mbox{ for all } ~\delta \bv\in  \mathds{H}_{\mathds{F}}^N ~\mbox{ satisfying } ~\frac{1}{N}\sum_{i=1}^N \delta v^i =0.
%\]
%Then, there exists a unique $\bar{v} \in \mathds{H}_{\mathds{F}}$ such that $v^i=\bar{v}$ for $1 \leq i \leq N$.
\end{lemma}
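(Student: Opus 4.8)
The plan is to establish the set equality by proving the two inclusions, exploiting that $\mathcal{Z}$ is the ``sum-zero'' subspace of the product Hilbert space $\mathds{H}_{\mathds{F}}^N$ and that $\mathds{1}_N$ spans, over $\mathds{H}_{\mathds{F}}$, its orthogonal complement, exactly as in the finite-dimensional identity $\{w\in\Rr^N:\sum_i w^i=0\}^\perp=\spn\{(1,\dots,1)\}$.

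First I would prove $\{\bar v\mathds{1}_N:\bar v\in\mathds{H}_{\mathds{F}}\}\subseteq\mathcal{Z}^\perp$. Given $\bar v\in\mathds{H}_{\mathds{F}}$, the vector $\bar v\mathds{1}_N$ has each component equal to $\bar v\in\mathds{H}_{\mathds{F}}$, so it belongs to $\mathds{H}_{\mathds{F}}^N$. For any $\bw\in\mathcal{Z}$ one computes
\[
	\langle \bar v\mathds{1}_N,\bw\rangle_{\mathds{H}_{\mathds{F}}^N}=\sum_{i=1}^N\langle \bar v,w^i\rangle_{\mathds{H}_{\mathds{F}}}=\Big\langle \bar v,\sum_{i=1}^N w^i\Big\rangle_{\mathds{H}_{\mathds{F}}}=0,
\]
using bilinearity of $\langle\cdot,\cdot\rangle_{\mathds{H}_{\mathds{F}}}$ together with $\sum_i w^i=0$. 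Hence $\bar v\mathds{1}_N\in\mathcal{Z}^\perp$.

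Next I would prove the reverse inclusion $\mathcal{Z}^\perp\subseteq\{\bar v\mathds{1}_N:\bar v\in\mathds{H}_{\mathds{F}}\}$. Let $\bv=(v^1,\dots,v^N)\in\mathcal{Z}^\perp$. The key idea is to test against ``difference'' vectors: for fixed indices $i\neq j$ and an arbitrary $\phi\in\mathds{H}_{\mathds{F}}$, let $\bw\in\mathds{H}_{\mathds{F}}^N$ be the vector whose $i$-th component is $\phi$, whose $j$-th component is $-\phi$, and whose other components are $0$. Then $\sum_{k=1}^N w^k=\phi-\phi=0$, so $\bw\in\mathcal{Z}$, and therefore
\[
	0=\langle\bv,\bw\rangle_{\mathds{H}_{\mathds{F}}^N}=\langle v^i,\phi\rangle_{\mathds{H}_{\mathds{F}}}-\langle v^j,\phi\rangle_{\mathds{H}_{\mathds{F}}}=\langle v^i-v^j,\phi\rangle_{\mathds{H}_{\mathds{F}}}.
\]
Since $\phi\in\mathds{H}_{\mathds{F}}$ is arbitrary, $v^i=v^j$ in $\mathds{H}_{\mathds{F}}$; as $i,j$ were arbitrary, all components of $\bv$ coincide. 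Setting $\bar v:=v^1\in\mathds{H}_{\mathds{F}}$ yields $\bv=\bar v\mathds{1}_N$, which closes the argument.

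There is no substantive obstacle here: the statement is the Hilbert-space-valued analogue of an elementary linear-algebra fact, and the only points needing (minor) care are checking that the test vectors actually lie in $\mathds{H}_{\mathds{F}}^N$ (immediate, since real finite linear combinations of elements of $\mathds{H}_{\mathds{F}}$ stay in $\mathds{H}_{\mathds{F}}$) and that being orthogonal to every $\phi\in\mathds{H}_{\mathds{F}}$ forces an element to vanish, which is just non-degeneracy of the inner product on the Hilbert space $\mathds{H}_{\mathds{F}}$.
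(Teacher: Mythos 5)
Your proof is correct and takes essentially the same route as the paper: both inclusions are established by direct orthogonality testing, with the easy inclusion handled identically. The only (cosmetic) difference is in the choice of test elements of $\mathcal{Z}$ for the reverse inclusion --- you use pairwise difference vectors (with $\phi$ and $-\phi$ in two slots and zeros elsewhere) to conclude $v^i=v^j$ for all $i,j$, whereas the paper tests against the mean-zero projection $\delta\bw-\bigl(\tfrac{1}{N}\sum_{i}\delta w^i\bigr)\mathds{1}_N$ of an arbitrary $\delta\bw\in\mathds{H}_{\mathds{F}}^N$ to identify each $v^i$ with the average $\tfrac{1}{N}\sum_{k}v^k$ --- and the two are interchangeable.
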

\begin{proof}
Let $\bv \in \mathcal{Z}^\perp$.
For $\delta \bw \in \mathds{H}_{\mathds{F}}^N$, define $ \bw = \delta \bw - \left(\frac{1}{N} \sum\limits_{i=1}^N \delta w^i \right)\mathds{1}_N$.
Then, $\bw \in \mathcal{Z}$, which implies that $\langle \bv , \bw \rangle_{\mathds{H}_{\mathds{F}}^N} =0.$ Writing
\begin{align*}
	&\sum_{i=1}^N \bigg\langle \delta w^i  , v^i - \frac{1}{N} \sum_{k=1}^N v^k \bigg\rangle_{\mathds{H}_{\mathds{F}}}=\Ee\left[ \int_0^T \sum_{i=1}^N \delta w_t^i \left( v_t^i - \frac{1}{N} \sum_{k=1}^N v_t^k \right) \right] 
	\\
	&= \Ee\left[ \int_0^T \sum_{i=1}^N v_t^i \left( \delta w_t^i - \frac{1}{N} \sum_{k=1}^N \delta w_t^k \right) \right] =\Ee\left[ \int_0^T \sum_{i=1}^N v_t^i  w_t^i dt \right] =\langle \bv ,  \bw \rangle_{\mathds{H}_{\mathds{F}}^N},
\end{align*}
the orthogonality between $\bv$ and $\bw$ implies that
\begin{align*}
	&\sum_{i=1}^N \bigg\langle \delta w^i  , v^i - \frac{1}{N} \sum_{k=1}^N v^k \bigg\rangle_{\mathds{H}_{\mathds{F}}}=0.
\end{align*}
Because in the previous identity $\delta \bw$ is arbitrary, we conclude that $\bar{v}:=\frac{1}{N} \sum_{k=1}^N v^k \in \mathds{H}_{\mathds{F}}$ satisfies $v^i = \bar{v} $, for  $1\leq i \leq N$; that is, $\bv = \bar{v}\mathds{1}_N$, where $\bar{v} \in \mathds{H}_{\mathds{F}}$.
On the other hand, let $\bv = \bar{v}\mathds{1}_N$, where $\bar{v} \in \mathds{H}_{\mathds{F}}$, and let $\bw \in \mathcal{Z}$.
Then,
\[
	\langle \bv , \bw \rangle_{\mathds{H}_{\mathds{F}}^N} = \sum_{i=1}^N \langle v^i , w^i \rangle_{\mathds{H}_{\mathds{F}}} =\sum_{i=1}^N \Ee \left[ \int_0^T \bar{v}_t w^i_t dt \right] = \Ee\left[\int_0^T \left( \sum_{i=1}^N w^i_t \right) \bar{v}_t dt \right]=0,
\]
which implies that $\bv \in \mathcal{Z}^\perp$.
This completes the proof.
\end{proof}
Next, we prove the existence of a Lagrange multiplier corresponding to the balance condition.
This Lagrange multiplier uniquely defines the price.

\begin{proposition}\label{Proposition: existence of the price}
Suppose that Assumptions \ref{hyp: Psi growth} and \ref{hyp: L Lx Lv} hold.
Let $\bv^* \in \mathds{H}_{\mathds{F}}^N$ solve Problem \ref{problem:problem no price} with the corresponding trajectory $\bX^*$.
For $1\leq i \leq N$, let $P^i ,  Z^i \in \mathds{H}_{\mathds{F}}$ solve, on $[0,T]$,
\begin{equation}\label{eq: N Backward P}
\begin{cases}
dP_t^i = - L_x({X^*_t}^i,{v_t^*}^i) dt + Z^i_t dW_t
\\
P_T^i=\Psi'({X^*_T}^i).
\end{cases}
\end{equation}
Then, there exists a unique $\Pi \in \mathds{H}_{\mathds{F}}$ that satisfies
\begin{equation}\label{eq: N Lagrange Multiplier}
	\Pi=P^i + L_v({X^*}^i,{v^*}^i) \quad \mbox{for }  1 \leq i \leq N.
\end{equation}
Hence, 
\begin{equation}\label{eq:MultiplierFormula}
	\Pi=\frac{1}{N} \sum_{i=1}^N P^i + L_v({X^*}^i,{v^*}^i), ~ \mbox{ and }~ \Pi_T = \frac{1}{N} \sum_{i=1}^N \Psi'({X_T^*}^i) + L_v({X_T^*}^i,{v_T^*}^i).
\end{equation}
\end{proposition}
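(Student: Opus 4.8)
The strategy is to use the first-order optimality (Euler--Lagrange) condition for the constrained minimization Problem~\ref{problem:problem no price}, combined with the characterization of the orthogonal complement $\mathcal{Z}^\perp$ given in Lemma~\ref{lemma: Orthogonal to zero mean}. First I would note that, since $\bv^*$ solves Problem~\ref{problem:problem no price} and the constraint set $\mathcal{C}$ is affine with direction space $\mathcal{Z}=\{\bw\in\mathds{H}_{\mathds{F}}^N:\sum_i w^i=0\}$, any admissible variation has the form $\bv^*+\epsilon\,\delta\bw$ with $\delta\bw\in\mathcal{Z}$. Differentiating $I_N[\bv^*+\epsilon\,\delta\bw]$ at $\epsilon=0$ exactly as in Proposition~\ref{Prop: Weak EL Single agent} (the growth conditions in Assumptions~\ref{hyp: Psi growth} and \ref{hyp: L Lx Lv} justify differentiation under the integral sign), I obtain
\[
\frac{1}{N}\sum_{i=1}^N \Ee\left[\int_0^T L_x({X^*_t}^i,{v^*_t}^i)\,\delta X_t^i + L_v({X^*_t}^i,{v^*_t}^i)\,\delta w_t^i\,dt + \Psi'({X^*_T}^i)\,\delta X_T^i\right]=0
\]
for all $\delta\bw\in\mathcal{Z}$, where $\delta X_t^i=\int_0^t\delta w_s^i\,ds$.

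Next I would introduce the adjoint processes: for each $i$, the BSDE \eqref{eq: N Backward P} has a unique solution $(P^i,Z^i)$ in $\mathds{H}_{\mathds{F}}$ by Theorem~2.1 of \cite{BackwardSDE1997}, exactly as in Proposition~\ref{proposition: Hamilton System}, since $L_x({X^*}^i,{v^*}^i)$ is adapted and $\Psi'({X^*_T}^i)\in\mathds{L}^2_T(\Rr)$ by Assumption~\ref{hyp: Psi growth}. Applying It\^o's product rule to $P_t^i\,\delta X_t^i$, using $\delta X_0^i=0$ and that $\Ee[\int_0^T Z_t^i\,\delta X_t^i\,dW_t]=0$ (both $Z^i,\delta X^i\in\mathds{H}_{\mathds{F}}$), I get
\[
\Ee\left[\Psi'({X^*_T}^i)\,\delta X_T^i\right]=\Ee\left[\int_0^T -L_x({X^*_t}^i,{v^*_t}^i)\,\delta X_t^i + P_t^i\,\delta w_t^i\,dt\right].
\]
Substituting this into the Euler--Lagrange identity makes the $L_x$ and $\Psi'$ terms cancel, leaving
\[
\frac{1}{N}\sum_{i=1}^N\Ee\left[\int_0^T\bigl(P_t^i+L_v({X^*_t}^i,{v^*_t}^i)\bigr)\,\delta w_t^i\,dt\right]=0
\qquad\text{for all }\delta\bw\in\mathcal{Z}.
\]
Writing $R^i:=P^i+L_v({X^*}^i,{v^*}^i)\in\mathds{H}_{\mathds{F}}$ (it lies in $\mathds{H}_{\mathds{F}}$ by Assumptions~\ref{hyp: L Lx Lv} and the fact that $P^i\in\mathds{H}_{\mathds{F}}$), this says precisely that the vector $\bR=(R^1,\dots,R^N)$ is orthogonal to $\mathcal{Z}$ in $\mathds{H}_{\mathds{F}}^N$, i.e. $\bR\in\mathcal{Z}^\perp$.

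By Lemma~\ref{lemma: Orthogonal to zero mean}, $\mathcal{Z}^\perp=\{\bar v\,\mathds{1}_N:\bar v\in\mathds{H}_{\mathds{F}}\}$, so there is a single $\Pi\in\mathds{H}_{\mathds{F}}$ with $R^i=\Pi$ for every $i$, which is exactly \eqref{eq: N Lagrange Multiplier}. Uniqueness of $\Pi$ is immediate: if $\Pi'$ also satisfies \eqref{eq: N Lagrange Multiplier} then $\Pi-\Pi'=R^i-R^i=0$ in $\mathds{H}_{\mathds{F}}$. Finally, \eqref{eq:MultiplierFormula} follows by averaging \eqref{eq: N Lagrange Multiplier} over $i$, and the terminal identity follows by evaluating at $t=T$ using $P_T^i=\Psi'({X^*_T}^i)$. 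The main subtlety to get right is the justification that the Euler--Lagrange identity holds for \emph{all} $\delta\bw\in\mathcal{Z}$ (admissibility of the perturbed controls and differentiation under the integral), together with keeping careful track that every stochastic-integral term has zero expectation — these are the places where the hypotheses are actually used; the rest is the orthogonality bookkeeping delivered by Lemma~\ref{lemma: Orthogonal to zero mean}.
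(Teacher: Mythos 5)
Your proposal is correct and follows essentially the same route as the paper's proof: first variation of $I_N$ along directions in $\mathcal{Z}$, introduction of the adjoint BSDEs via Theorem~2.1 of the cited reference, It\^o's product rule to eliminate the $L_x$ and $\Psi'$ terms, and then Lemma~\ref{lemma: Orthogonal to zero mean} to identify $P^i+L_v({X^*}^i,{v^*}^i)$ as a common element $\Pi$ of $\mathds{H}_{\mathds{F}}$. The only cosmetic difference is that you name the vector $\bR$ explicitly before invoking the orthogonality lemma, which the paper does implicitly.
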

\begin{proof}
%Let $\delta \bv \in \mathds{H}_{\mathds{F}}^N$ be such that $\sum_{i=1}^N \delta v^i =0$, 
Let $\delta \bv \in \mathcal{Z}$, and define $\delta \bX$ according to \eqref{eq: deltaX}.
Then, for all $\epsilon >0$, according to \eqref{eq: N agents dynamics with initial condition}, the process $\bX^\epsilon=\bX^* + \epsilon \delta \bX$ is driven by $\bv^* + \epsilon \delta \bv$. Notice that $\bv^* + \epsilon \delta \bv \in \mathcal{C}$ because $\bv^*$ satisfies the balance condition, and hence
\[
	\frac{1}{N}\sum\limits_{i=1}^N \left({v^*}^i+\epsilon \delta v^i\right) =Q.
\]
Thus, the function $	\epsilon \mapsto I_N[\bv^* + \epsilon \delta \bv]$ attains a minimum at $\epsilon=0$.
Therefore, 
\[
	\left.\frac{d}{d\epsilon}   I_N[\bv^* + \epsilon \delta \bv] \right\rvert_{\epsilon=0}=0.
\]
Proceeding as in the proof of Proposition \ref{Prop: Weak EL Single agent}, using  Assumptions \ref{hyp: Psi growth} and \ref{hyp: L Lx Lv}, we conclude that
\begin{equation}\label{eq: First Variation reduced N agent min}
	\frac{1}{N} \sum_{i=1}^N \Ee\left[ \int_0^T L_x({X_t^*}^i,{v_t^*}^i) \delta X_t^i + L_v({X_t^*}^i,{v_t^*}^i) \delta v_t^i ~ dt + \Psi'({X_T^*}^i) \delta X_T^i \right] =0.
\end{equation}
Now, for $1\leq i \leq N$, consider the following BSDE on $[0,T]$
\begin{equation}\label{eq:Aux Multiplier existence}
\begin{cases}
dP_t^i = - L_x({X^*_t}^i,{v_t^*}^i) dt + Z^i_t dW_t
\\
P_T^i=\Psi'({X^*_T}^i).
\end{cases}
\end{equation}
Assumption \ref{hyp: Psi growth} guarantees that $\Psi'({X^*_T}^i) \in \mathds{L}^2_T(\Rr)$, so we use Theorem 2.1 in \cite{BackwardSDE1997}, and we denote by $(P^i,Z^i)$ the unique solution of \eqref{eq:Aux Multiplier existence}.
By applying It\^o's product rule to $P^i \delta X^i$, we get 
\begin{equation}\label{eq:Aux Ito product rule}
	L_x({X_t^*}^i,{v_t^*}^i) \delta X_t^i dt = P_t^i \delta v_t^i dt - d\left(P_t^i \delta X_t^i\right) + \delta X_t^i Z_t^i dW_t.
\end{equation}
Because the process $s \mapsto \int_0^s Z_t^i \delta X_t^i dW_t$ is a martingale w.r.t. $\mathcal{F}_s$ and hence (\cite{MR2001996}, Corollary 3.2.6)
\begin{equation}\label{eq:Aux Martingale 0}
	\Ee\left[ \int_0^T Z_t^i \delta X_t^i dW_t\right]=0,
\end{equation}  
using \ref{eq:Aux Ito product rule}, the definition of $\delta \bX$, and the previous identity, we write \eqref{eq: First Variation reduced N agent min} as
\[
	\frac{1}{N} \sum_{i=1}^N \Ee\left[ \int_0^T \left( P_t^i + L_v({X_t^*}^i,{v_t^*}^i)\right)  \delta v_t^i ~ dt \right] = \frac{1}{N} \langle \bP + L_v({\bX^*},{\bv^*}) ,  \delta \bv \rangle_{\mathds{H}_{\mathds{F}}^N}=0.
\]
Hence, by Lemma \ref{lemma: Orthogonal to zero mean}, there exists $\Pi \in \mathds{H}_{\mathds{F}}$ such that for $1\leq i \leq N$
\begin{equation*}
	P^i + L_v({X^*}^i,{v^*}^i)=\Pi.
\end{equation*}
Thus, taking the mean over $i$ and using the terminal condition for $P^i$, we get
\[
	\Pi=\frac{1}{N} \sum_{i=1}^N P^i + L_v({X^*}^i,{v^*}^i), \quad \mbox{and} \quad \Pi_T = \frac{1}{N} \sum_{i=1}^N \Psi'({X_T^*}^i) + L_v({X_T^*}^i,{v_T^*}^i).\qedhere
\]	
\end{proof}
The following result shows that the existence of the price process follows from the existence of the Lagrange multiplier associated with the balance condition.

\begin{proof}[{\bf Proof of Theorem \ref{thm:MainThm}}]
By Proposition \ref{Prop: Existence N constrained}, let $\bv^*=({v^*}^1,\ldots,{v^*}^N)$ be a minimizer of \eqref{eq: reduced Nagent min problem}.
From Proposition \ref{Proposition: existence of the price}, let $\Pi \in \mathds{H}_{\mathds{F}}$ be the process that satisfies, for $1\leq i \leq N$,
\[
	P^i + L_v({X^*}^i,{v^*}^i)-\Pi=0.
\]
Hence, for $\delta \bv \in \mathds{H}_{\mathds{F}}^N$ and $1\leq i \leq N$, we have
\[
	\Ee\left[ \int_0^T \left( P^i_t + L_v({X_t^*}^i,{v_t^*}^i)-\Pi_t\right) \delta v^i_t ~ dt \right] =0.
\]
Applying It\^o's product rule to $P^i \delta X^i$, $\delta \bX$ as in \eqref{eq: deltaX}, and using \eqref{eq: N Backward P}, we rearrange \eqref{eq:Aux Ito product rule} to obtain
\begin{align*}
	d\left( P_t^i \delta X_t^i\right) &= -L_x({X_t^*}^i,{v_t^*}^i)\delta X_t^idt + Z_t^i \delta X_t^i dW_t + P_t^i \delta v_t^i dt.
\end{align*}
Hence, taking $\Ee\left[\int_0^T \cdot dt \right]$ on the previous identity, we get   
\begin{equation}\label{eq: Aux Existence price 0}
	\Ee\left[ \int_0^T d\left( P_t^i \delta X_t^i\right) + L_x({X_t^*}^i,{v_t^*}^i) \delta X_t^i dt - Z_t^i \delta X_t^i dW_t - P_t^i \delta v_t^i dt \right]=0.
\end{equation}
On the other hand, using the terminal condition for $P^i$ in \eqref{eq: N Backward P}, the initial condition for $\delta X^i$, and \eqref{eq:Aux Martingale 0}, together with \eqref{eq: N Lagrange Multiplier}, we get
\begin{align}\label{eq: Aux Existence price 1}
	&\Ee\left[ \int_0^T d\left( P_t^i \delta X_t^i\right) + L_x({X_t^*}^i,{v_t^*}^i) \delta X_t^i dt - Z_t^i \delta X_t^i dW_t  - P_t^i\delta v_t^i dt \right]
	\\
	&= \Ee\left[ \Psi'({X^*_T}^i) \delta X_T^i + \int_0^T L_x({X_t^*}^i,{v_t^*}^i) \delta X_t^i + \left(L_v({X_t^*}^i,{v_t^*}^i) - \Pi_t \right) \delta v_t^i ~ dt \right].\nonumber
\end{align}
From \eqref{eq: Aux Existence price 0} and \eqref{eq: Aux Existence price 1}, we obtain
\[
	\Ee\left[ \Psi'({X^*_T}^i) \delta X_T^i + \int_0^T L_x({X_t^*}^i,{v_t^*}^i) \delta X_t^i + \left(L_v({X_t^*}^i,{v_t^*}^i) - \Pi_t \right) \delta v_t^i ~ dt \right]=0,
\]
which is the necessary condition \eqref{eq: EL Single agent} for the optimal control ${v^*}^i$ of the agent $i$ in the representative agent problem (see Section \ref{sec: single agent problem}), with the price $\varpi$ equal to $-\Pi$.
Therefore, the minimizer $\bv^*$ of \eqref{eq: reduced Nagent min problem} defines, by Proposition \ref{Proposition: existence of the price}, the multiplier $\Pi$ such that $\bv^*$ also minimizes  
\begin{gather*}
\inf_{\bv\in \mathds{H}_{\mathds{F}}^N} \left( I_N[\bv] + \Ee\left[ \int_0^T \Pi_t \left( Q_t - \frac{1}{N}\sum_{i=1}^N v_t^i \right) dt \right] \right)
\\
 \mbox{subject to }  \quad \frac{1}{N}\sum\limits_{i=1}^N v^i=Q, ~\mbox{ and } ~ \bX ~ \mbox{ solves } ~\eqref{eq: N agents dynamics with initial condition}.
\end{gather*}
Furthermore, since $Q$ does not depend on $\bv$, we can drop the term $\Ee\left[ \int_0^T \Pi_t  Q_t dt \right]$ from the previous functional, and obtain that $\bv^*$ solves
\begin{gather*}
\inf_{\bv\in \mathds{H}_{\mathds{F}}^N} \frac{1}{N} \sum_{i=1}^N \Ee\left[\int_0^T L(X_t^i,v^i_t) - \Pi_t v_t^i ~ dt + \Psi(X^i_T) \right]
\\
\mbox{subject to }  \quad \frac{1}{N}\sum\limits_{i=1}^N v^i=Q, ~ \mbox{ and } ~ \bX ~ \mbox{ solves } ~\eqref{eq: N agents dynamics with initial condition}.
\end{gather*}
Hence, $\bv^*$ solves Problem \ref{problem: problem}  for $ \varpi = -\Pi$; that is, the multiplier $-\Pi$ of the constrained problem \eqref{eq: reduced Nagent min problem} is the price $\varpi$ of Problem \ref{problem: problem}.
Finally, under Assumption \ref{hyp: L uniformly convex}, the minimizer $\bv^*$ is unique, and hence, the multiplier $\Pi$ is uniquely defined by \eqref{eq:MultiplierFormula}, which in turn uniquely defines the price $\varpi$.
\end{proof}
%----------------------------------------------------------------------------------------------------------------------------------

\section{The linear-quadratic model}\label{Sec: LQ model}
In this section, we study the case of linear dynamics for the supply and quadratic cost structure. We consider the price formation problem for $N$ players and the representation formulas for the price obtained in Section \ref{sec: NAgent problem}. Then, we discuss the convergence as $N\to \infty$ to the limit problem for a continuum of players, which corresponds to a MFG with common noise, previously studied in \cite{GoGuRi2021}.

Let $\eta,\gamma\geq 0$, $c>0$ and $\kappa,~\zeta \in \Rr$. 
We assume the Lagrangian and the terminal cost to be
\begin{equation}\label{def:L Psi LQ}
	L(x,v)=\frac{\eta}{2}(x-\kappa)^2 + \frac{c}{2}v^2 \quad \mbox{and} \quad \Psi(x)=\frac{\gamma}{2}(x-\zeta)^2,
\end{equation}
respectively. The parameter $\zeta$ corresponds to the preferred final storage, and $\kappa$ is the preferred instantaneous storage. A natural assumption is $\zeta = \kappa$. For $\eta =0$, the running cost depends on the trading rate only. The associated Hamiltonian is
\begin{equation}\label{eq: Q Hamiltonian}
	H(x,p)=-\frac{\eta}{2}(x-\kappa)^2 + \frac{1}{2c} p^2.
\end{equation}

\subsection{Linear System formulation for finite players}
Here, we develop the analytic representation for the price, $\varpi^N$, that solves Problem \ref{problem: problem} for $N$ players. Using \eqref{eq: Q Hamiltonian}, the Hamiltonian system \eqref{eq: Hamiltonian System} for agent $i$ is
\begin{equation}\label{eq: Q Hamiltonian System} 
	\begin{cases} 
	dX^i_t = -\frac{1}{c}(P^i_t+\varpi_t) dt
	\\
	X^i_0=x^i_0
	\\
	dP^i_t =  - \eta (X^i_t-\kappa) dt + Z^i_t dW_t
	\\
	P^i_T=\gamma(X^i_T-\zeta),
	\end{cases}
\end{equation}
and the optimal control (see Proposition \ref{proposition: Hamilton System}) simplifies to $v^i=-\frac{1}{c}(P^i+\varpi)$.
From Proposition \ref{Proposition: existence of the price}, the price has the formula
\begin{gather}\label{eq: price-adjoint var relation}
	\varpi^N =-\frac{1}{N}\sum_{i=1}^N (P^i + c v^i) =-\left(\frac{1}{N}\sum_{i=1}^N P^i+c Q\right).
\end{gather}
Assuming that $Q$ is described by an It\^{o} differential, we take differentials in the previous and using \eqref{eq: Q Hamiltonian System}, we see that
\begin{equation}\label{eq:priceN players formula}
	d\varpi^N = \eta \left( \overline{X}_t - \kappa \right) dt -\overline{Z}_t dW_t - c dQ,	
\end{equation}
where 
\[
	\overline{X} = \frac{1}{N}\sum_{i=1}^N X^i, \quad \mbox{and }\quad  	\overline{Z} = \frac{1}{N}\sum_{i=1}^N Z^i.
\]
From the balance condition \eqref{eq:Balance condition}, we have that $d\overline{X}_t = Q_t dt$; that is,
\begin{equation*}%\label{eq:Xbar N Players}
	\overline{X}_t = \overline{x}_0 + \int_0^T Q_s ds, \quad t \in [0,T],	
\end{equation*}
where $\overline{x}_0$ is the mean of the initial positions $x_0^i$ of the agents. Therefore, we obtain a representation formula for the dynamics of $\varpi^N$ once the It\^{o} dynamics of $Q$ are given. Yet, this representation involves the processes $Z^i$. To gain insight into the computation of the process $\overline{Z}$, we eliminate the dependence of \eqref{eq: Q Hamiltonian System} on the price using \eqref{eq: price-adjoint var relation}. We obtain
\begin{equation*}
	\begin{cases} 
	dX^i_t = -\frac{1}{c N}\sum\limits_{j=1}^N (P^i_t-P^j_t) +Q_t ~ dt
	\\
	X_0^i=x_0^i
	\\
	dP^i_t =  - \eta (X^i_t - \kappa) dt + Z^i_t dW_t
	\\
	P^i_T=\gamma(X^i_T-\zeta),
	\end{cases}
\end{equation*}
which corresponds to the following linear system for the $N$ players 
\[
	\begin{cases} 
	d\bX_t = (B \bP_t +Q_t \mathds{1}_N) ~ dt
	\\
	\bX_0=\bx_0
	\\
	d\bP_t =  - \eta( \bX_t - \kappa \mathds{1}_N )dt + \bZ_t dW_t
	\\
	\bP_T=\gamma(\bX_T-\zeta\mathds{1}_N),
	\end{cases}
\]
where
\[
	 \quad B=\frac{1}{c N}\begin{bmatrix}
	1-N & 1 & \ldots & 1\\
	1 & 1-N & \ldots & 1\\
	\vdots & \vdots & \ddots & \vdots\\
	1 & 1 & \ldots & 1-N
	\end{bmatrix}, \quad \mathds{1}_N=\begin{bmatrix}
	1 \\
	1\\
	\vdots\\
	1
	\end{bmatrix}.
\]
The previous is a linear forward-backward SDE system. For the solvability of such systems, two main approaches have been proposed: the Four Step Scheme and the Method of Continuation (see \cite{FBSDEbook2007}, Chapters 4 and 6). In the former, the coefficients are required to be deterministic, which is not the case due to the dependence on $Q$. The latter admits systems with random coefficients, but the method relies on the existence of a so-called bridge, which transforms the given system into one whose solution is required to be known. Moreover, the construction of such bridges has proven to be useful for one-dimensional problems, but it is not trivial for high-dimensional systems. Other techniques to reduce the previous system include the variation of constants formula for $(\bX,\bP)$ in terms of the process $\bZ$, and the use of Riccati-type equations (see \cite{FBSDEbook2007}, Chapter 2). The reduction techniques have no trivial extension to the case of random coefficients (see \cite{YongRandomCoeff}).

Alternatively, we can consider the dynamics of the mean processes $\overline{X}$ and $\overline{Z}$, which, according to \eqref{eq: Q Hamiltonian System} and \eqref{eq: price-adjoint var relation}, follow
\[
	\begin{cases}
	d\overline{X}_t = Q_t dt
	\\
	\overline{X}_0=\overline{x}_0
	\\
	dQ_t=b^S(Q_t,t)dt+ \sigma^S(Q_t,t)dW_t
	\\
	Q_0=q_0
	\\
	d\overline{P}_t = - \eta \left(\overline{X}_t-\kappa \right) dt + \overline{Z}_t dW_t
	\\
	\overline{P}_T = \gamma \left( \overline{X}_T - \zeta\right).
	\end{cases}
\]
Following the standing assumption in the formulation of the Four Step Scheme, we assume that $\overline{P}=\theta\left( \overline{X},Q,t\right)$ for some $\theta: \Rr^2 \times [0,T] \to \Rr$. Then, we look for a parabolic PDE characterizing $\theta$ by considering the relation between the drift and volatility in the previous system derived from It\^{o} formula applied to $\theta(\overline{X},Q,T)$. A difficulty is the degeneracy of the forward component $(\overline{X},Q)$ because it does not depend on $\overline{Z}$. Therefore, we can not recover a consistency condition that completely determines the function $\theta$.  Yet, \eqref{eq:priceN players formula} provides a useful representation to study the limit as $N\to \infty$. In Section \ref{sec:numerical}, we consider a discrete representation of the noise to approximate numerically the price $\varpi^N$ that solves the $N$ players game. 

\subsection{Optimal control formulation for infinite players}

Here, we adopt an optimal control approach in an extended state space to compute the price, $\varpi^{\infty}$, that solves the analogous of Problem \ref{problem: problem} for a continuum of players. We obtain explicit formulas for the price up to the solution of an ODE system. Using the explicit representation, we consider the convergence of $\varpi^N$ to $\varpi^\infty$ as $N\to \infty$. 

The linear-quadratic price formation MFG problem was studied in \cite{GoGuRi2021}. Here, we focus on the explicit solution representation for the linear-quadratic case for $\eta \neq 0$. Assume the supply $Q$ follows the SDE
\begin{equation}\label{eq: supply dynamics}
dQ_t=b^S(Q_t,t)dt+\sigma^S(Q_t,t)dW_t,
\end{equation}
where $b^S:\Rr\times[0,T] \to \Rr$ is the drift and $\sigma^S:\Rr\times[0,T] \to \Rr$ is the volatility, which are measurable smooth functions that satisfy
\begin{equation*}
%\label{eq:Lipschitz cond supply}
\begin{aligned}
|b^S(q,t)-b^S(p,t)|+|\sigma^S(q,t)-\sigma^S(p,t)| \leq C |q-p|\\
|b^S(q,t)|+|\sigma^S(q,t)| \leq D(1+|q|)
\end{aligned}
\quad\quad\mbox{for all } q \in \Rr,~ t\in [0,T]
\end{equation*}
for some constants $C,D>0$.
These conditions guarantee the existence of $Q$ (see \cite{MR2001996}, Theorem 5.2.1 for further details). The standing assumption is that the price follows
\[
	d\varpi^\infty_t = b^P(Q_t,\varpi^\infty_t,t) dt + \sigma^P(Q_t,\varpi^\infty_t,t)dW_t,
\]
where the drift $b^P$, the volatility $\sigma^P$, and the initial condition $w_0 \in \Rr$ are to be determined. 
%the MFG problem corresponds to find $u:\Rr^3 \times [0,T] \to \Rr$, $m \in C([0,T] \times \Omega; \Pp(\Rr^3))$, $w_0 \in \Rr$, and $b^P,\sigma^P: \Rr^2 \times [0,T] \to \Rr$ solving
%\[
%	\begin{cases}
%	-u_t + \tfrac{1}{2c}(w+u_x)^2 = b^S u_q + b^P u_w + \tfrac{1}{2}(\sigma^S)^2 u_{qq} + \sigma^S \sigma^P u_{qw} + \tfrac{1}{2}(\sigma^P)^2 u_{ww}
%	\\
%	u(x,q,w,T)=\tfrac{\gamma}{2}(x-\zeta)^2
%	\\
%	dm_t = \left(\left(\tfrac{m (\sigma^S)^2}{2}\right)_{qq} + \left( m \sigma^S \sigma^P \right)_{qw} + \left(\tfrac{m (\sigma^P)^2}{2}\right)_{ww} - \div(m \bb) \right)dt - \div(m \bsigma)dW_t
%	\\
%	m_0 = \tilde{m}_0 \times \delta_{q_0} \times \delta_{w_0}
%	\\
%	\int_{\Rr^3} q + \tfrac{1}{c}(w+u_x(x,q,w,t))m_t(dx\times dq \times dw) =0,~ 0\leq t \leq T,
%	\end{cases}
%\]	
%where $\bb=\left(-\tfrac{1}{c}(w+u_s),b^S,b^P\right)$, $\bsigma = (0,\sigma^S,\sigma^P)$, and the divergence is w.r.t. $(x,q,w)$.
The approach presented in \cite{GoGuRi2021} considered the case $\eta=0$. Here, we extend that approach to include the case $\eta \neq 0$. 
In this setting, we can characterize the price as the unique solution of an SDE.
Let $x_0,\overline{x}_0,q_0 \in\Rr$.
We consider the following dynamics
\begin{equation}\label{eq: LQ Augmented dynamics}
	\begin{cases} 
	d X_t = v_t ~ dt
	\\
	X_0=x_0
	\\
	d \overline{X}_t^\infty = Q_t ~ dt
	\\
	\overline{X}_0^\infty=\overline{x}_0,
	\\
	dQ_t =	b^S(Q_t,t)dt+\sigma^S(Q_t,t)dW_t
	\\
	Q_0=q_0
	\\
	d\varpi_t^\infty =	b^P(X_t,\overline{X}_t ,Q_t,\varpi_t,t)dt+\sigma^P(X_t,\overline{X}_t,Q_t,\varpi_t,t)dW_t
	\\
	\varpi_0^\infty=w_0.
	\end{cases}
\end{equation}
In the previous, $w_0\in \Rr$ and the coefficients $b^P$ and $\sigma^P$ are unknown.
We make the key assumption that the coefficients of the SDE driving the price have the form in \eqref{eq: LQ Augmented dynamics}.
As we will see in \eqref{eq: Q Coeffiecients relation price supply}, this is the case if the supply's coefficients $b^S$ and $\sigma^S$ are linear.

From the standard optimal control theory, define the value function $u:\Rr^4 \times [0,T] \to \Rr$ by
\begin{equation*}
u(x,\overline{x},q,w,t)=\inf_{v\in L^2([t,T]\times\Omega)} \Ee\left[\int_t^T L(X_s,v_s) + \varpi_s v_s ~ ds + \Psi(X_T) \right],
\end{equation*}
where $(X,\overline{X},Q,\varpi)$ solves \eqref{eq: LQ Augmented dynamics} for $t \leq s \leq T$ and initial condition $(x,\overline{x},q,w)$ at $t$. 
%\begin{equation*}
%	\begin{cases} 
%	d X_s = v_s ~ ds
%	\\
%	X_t=x
%	\\
%	d \overline{X}_s = Q_s ~ ds
%	\\
%	\overline{X}_t=\overline{x}
%	\\
%	dQ_s =	b^S(Q_s,s)ds+\sigma^S(Q_s,s)dW_s
%	\\
%	Q_t=q
%	\\
%	d\varpi_s =	b^P(X_s,\overline{X}_s ,Q_s,\varpi_s,s)ds+\sigma^P(X_s,\overline{X}_s ,Q_s,\varpi_s,s)dW_s
%	\\
%	\varpi_t=w.
%	\end{cases}
%\end{equation*}
The corresponding Hamilton-Jacobi-Bellman equation is
\begin{equation}\label{eq: HJB quadratic}
	\begin{cases} 
	-u_t + H(x,w+u_x) = q u_{\overline{x}} + b^S u_q + b^P u_w + \frac{1}{2}(\sigma^S)^2 u_{qq}+\sigma^S \sigma^P u_{qw} +\frac{1}{2}(\sigma^P)^2 u_{ww}
 	\\
	u_T=\Psi,
	\end{cases}
\end{equation}
where all functions are evaluated at $(x,\overline{x},q,w,t)$.
Whenever $u$ is smooth enough, the optimal control in feedback form is
\[
	v^*(s) = -H_p(X_s,\varpi_s + u_x(X_s,\overline{X}_s,Q_s,\varpi_s,s))=-\frac{1}{c}(\varpi_s + u_x(X_s,\overline{X}_s,Q_s,\varpi_s,s)).
\]
Given $\tilde{m}_0 \in \Pp(\Rr)$, the balance condition corresponds to
\begin{equation}\label{eq:Balance condition infty}
	\int_{\Rr} -\frac{1}{c}(\varpi_t + u_x(X_t,\overline{X}_t,Q_t,\varpi_t,t))\tilde{m}_0(x)dx = Q_t, \quad 0\leq t \leq T,
\end{equation}
where $(X,\overline{X}^\infty,Q,\varpi)$ is the solution of \eqref{eq: LQ Augmented dynamics} with initial condition $(x,\mu_0,q_0,w_0)$, where $\mu_0$ denotes the mean of $\tilde{m}_0$. Under linear dynamics, the coefficients $b^P$ and $\sigma^P$ in \eqref{eq: HJB quadratic} have an explicit representation, as we show next.
\subsubsection{Linear dynamics and quadratic solutions}%\label{subsec LQ}
We further assume the dynamics of the supply have a linear structure
\begin{equation}\label{eq:Q linear dynamics}
	dQ_t = \left(b^S_{1}(t) Q_t +  b^S_{0}(t)\right)dt +\left(\sigma^S_{1}(t) Q_t +  \sigma^S_{0}(t)\right)dW_t.
\end{equation}
Hence, assume that $u$ is a second-degree polynomial in $x,\overline{x},q$ and $w$; that is,
\begin{align}\label{eq: Q u statement}
	u(x,\overline{x},q,w,t)=&a_0(t)+a_1^1(t) x+a_1^2(t) \overline{x}+ a_1^3(t) q + a_1^4(t) w
	\\
	 &+ a_2^1(t) x^2+ a_2^2(t) x\overline{x} + a_2^3(t) xq + a_2^4 (t) xw +  a_2^5(t) \overline{x}^2 + a_2^6(t) \overline{x}q + a_2^7(t) \overline{x}w \nonumber
	 \\
	 & + a_2^8(t) q^2 + a_2^9(t) qw + a_2^{10}(t) w^2, \nonumber
\end{align}
where $a_i^j:[0,T]\to \Rr$. 
Differentiating \eqref{eq: HJB quadratic} w.r.t. $x$ and applying the It\^o differential rule to the balance condition \eqref{eq:Balance condition infty}, 
%\[
%	du_x(X^i_t, \overline{X}_t, Q_t, \varpi_t,t) = - \eta (X_t^i - \kappa) dt + (u_{xq} \sigma^S + u_{xw} \sigma^P) dW_t.
%\]
%We use the previous identity to compute the differential of \eqref{eq: price-adjoint var relation}; that is,
%\begin{align*}
%	d \varpi_t &= - \left( \frac{1}{N} \sum_{i=1}^N du_x(X^i_t, \overline{X}_t, Q_t, \varpi_t,t) + c dQ_t \right)
%	\\
%		& = \left( \eta (\overline{X}_t - \kappa) - c b^S\right) dt - \left( (u_{xq} + c)\sigma^S + u_{xw}  \sigma^P \right) dW_t
%	\\
%		& = \left( \eta (\overline{X}_t - \kappa) - c b^S\right) dt - \left( (a_2^3 + c)\sigma^S + a_2^4  \sigma^P \right) dW_t.
%\end{align*}
we obtain that the drift and the volatility in \eqref{eq: HJB quadratic} are
\begin{align}\label{eq: Q Coeffiecients relation price supply}
	\begin{split}
	b^P(x,\overline{x},q,w,t) &=  \eta \left( \overline{x} - \kappa\right) - c b^S(q,t),
	\\
	\sigma^P(x,\overline{x},q,w,t) &= -\frac{a_2^3(t) + c}{a_2^4(t) +1}\sigma^S(q,t).
	\end{split}
\end{align}

The previous coefficients exhibit fundamental properties of the quadratic cost structure \eqref{def:L Psi LQ}. For instance, one term in the drift is proportional to the difference between the time-average supply, represented by $\overline{x}$, and the preferred running state $\kappa$, and the second term in the drift is the opposite behavior of the supply dynamics, proportional to the coefficient $c$ in the running cost. For the volatility, we observe a linear dependence on the supply's volatility, proportional to the running cost. We observe that the supply dynamics entirely determined the price dynamics. For instance, assuming mean-reverting dynamics for the supply
\begin{equation}\label{eq:Q SDE mean reverting}
	dQ_t = \left(\overline{Q}(t) - Q_t \right)dt +\sigma_s dW_t,
\end{equation}
where $\overline{Q}:[0,T] \to \Rr$ and $\sigma_s \in \Rr$, and replacing \eqref{eq: Q Coeffiecients relation price supply} and \eqref{eq: Q u statement} in \eqref{eq: HJB quadratic}, we obtain the following ODE system for the $a_i^j$ functions
{\footnotesize
\begin{equation*}
\begin{aligned}[c]
\dot{a}_0 &=a_1^4 (c \overline{Q}+\eta  \kappa )-\overline{Q} a_1^3+\frac{\sigma_s^2 a_2^9 \left(a_2^3+c\right)}{a_2^4+1}
\\ 
& \quad -\frac{\sigma_s^2 a_2^{10} \left(a_2^3+c\right)^2}{\left(a_2^4+1\right)^2}+\frac{\left(a_1^1\right)^2}{2 c}-\sigma_s^2 a_2^8-\frac{\eta  \kappa ^2}{2}
\\
\dot{a}_1^1 &=\overline{Q} \left(c a_2^4-a_2^3\right)+\frac{2 a_1^1 a_2^1}{c}+\eta  \kappa  \left(a_2^4+1\right)
\\ 
\dot{a}_1^2 &=c \overline{Q} a_2^7-\overline{Q} a_2^6+\frac{a_1^1 a_2^2}{c}+\eta  \kappa  a_2^7-\eta  a_1^4
\\ 
\dot{a}_1^3 &=c \overline{Q} a_2^9-2 \overline{Q} a_2^8-c a_1^4+\frac{a_1^1 a_2^3}{c}+\eta  \kappa  a_2^9-a_1^2+a_1^3
\\ 
\dot{a}_1^4 &=-\overline{Q} \left(a_2^9-2 c a_2^{10}\right)+\frac{a_1^1 \left(a_2^4+1\right)}{c}+2 \eta  \kappa  a_2^{10}
\\ 
\dot{a}_2^1 &=\frac{2 \left(a_2^1\right)^2}{c}-\frac{\eta }{2}
\\ 
\dot{a}_2^2 &=\frac{2 a_2^1 a_2^2}{c}-\eta  a_2^4
\end{aligned}
\hskip0.3cm
\begin{aligned}[c] 
\dot{a}_2^3 &=\frac{a_2^3 \left(2 a_2^1+c\right)}{c}-c a_2^4-a_2^2
\\
\dot{a}_2^4 &=\frac{2 a_2^1 \left(a_2^4+1\right)}{c}
\\ 
\dot{a}_2^5 &=\frac{\left(a_2^2\right)^2}{2 c}-\eta  a_2^7
\\ 
\dot{a}_2^6 &=\frac{a_2^2 a_2^3}{c}-c a_2^7-\eta  a_2^9-2 a_2^5+a_2^6
\\ 
\dot{a}_2^7 &=\frac{a_2^2 \left(a_2^4+1\right)}{c}-2 \eta  a_2^{10}
\\ 
\dot{a}_2^8 &=\frac{\left(a_2^3\right)^2}{2 c}-c a_2^9-a_2^6+2 a_2^8
\\ 
\dot{a}_2^9 &=\frac{a_2^3 \left(a_2^4+1\right)}{c}-2 c a_2^{10}-a_2^7+a_2^9
\\ 
\dot{a}_2^{10} &=\frac{\left(a_2^4+1\right)^2}{2 c}
\end{aligned}
\end{equation*}
}with the terminal conditions $a_ 0(T)=\frac{\gamma  \zeta ^2}{2}$, $a_1^1(T)=-\gamma  \zeta$, $a_2^1(T)=\frac{\gamma }{2}$, and zero for all other variables.

Hence, the price $\varpi^\infty$ is obtained as part of the solution to the following SDE system
\begin{equation}\label{eq:System price LQ}
	\begin{cases} 
	d \overline{X}_t^{\infty} = Q_t ~ dt
	\\
	\overline{X}_0^{\infty}=\mu_0,
	\\
	dQ_t =	(\overline{Q}(t)-Q_t) dt +\sigma_s dW_t
	\\
	Q_0=q_0
	\\
	d\varpi_t^\infty =	\left(\eta(\overline{X}_t^{\infty} - \kappa) - c (\overline{Q}(t)-Q_t)\right) dt  - \frac{a_2^3(t) + c}{a_2^4(t) +1} \sigma_s dW_t
	\\
	\varpi_0^\infty=w_0,
	\end{cases}
\end{equation}
where the initial condition for the price, $w_0$, is given by \eqref{eq:Balance condition infty} as
%{\color{blue}
%\begin{gather*}
%\varpi_0 =-\left(\frac{1}{N}\sum_{i=1}^N P^i_0+\beta Q_0\right)
%\\ =-\left(\frac{1}{N}\sum_{i=1}^N u_x(X^i_0,\overline{X}_0,Q_0,\varpi_0,0) +\beta Q_0\right)	 
%\\=-\left(a_1^1(0)+(2  a_2^1(0) + a_2^2(0))\overline{x}_0 + (a_2^3(0) +\beta ) q_0 + \varpi_0 a_2^4(0) \right)
%\end{gather*}
%}
\begin{gather}\label{eq:Initial price LQ}
	w_0 =-\frac{\mu_0 \left(2 a_ 2^1(0)+a_ 2^2(0)\right)+q_0\left(  a_ 2^3(0)+c \right) +a_ 1^1(0)}{a_ 2^4(0)+1}.
\end{gather}
The initial price relates linearly to the initial density, with a coefficient that depends implicitly on the parameters $\eta$, $\gamma$ and $c$,  and linearly to the initial supply, with an explicit coefficient $c$, inherited from the running cost. In this case, the functions $a_2^1$,$a_2^2$,$a_2^3$, and $a_2^4$ form a sub-system of ODEs that is independent of the other $a_i^j$ functions.
This sub-system has the analytic solutions
{\footnotesize
%\begin{align*}
%	a_2^3(t)&=\sqrt{\tfrac{c\eta}{c\eta - \gamma^2}}	\left[\left( \left(e^{t-T}-1\right)
%  \gamma +c+\eta 
%   \left(T-t-1+e^{t-T}\right)\right)+\sqrt{\tfrac{c}{\eta}} \gamma   \sinh
%   \left(\tfrac{\sqrt{\eta } (t-T)}{\sqrt{c}}\right) \right.
%   \\
%  & \quad  \left.- c  \cosh
%   \left(\tfrac{\sqrt{\eta } (t-T)}{\sqrt{c}}\right)\right] \text{sech}\left(\sqrt{\tfrac{\eta}{c}}
%   (t-T)-\tanh ^{-1}\left(\tfrac{\gamma }{\sqrt{c\eta }}\right)\right)   
%\end{align*}
\begin{align*}
   a_2^1(t)&= \tfrac{\sqrt{c\eta }}{2} \tanh \left(\tanh ^{-1}\left(\tfrac{\gamma
   }{\sqrt{c\eta }}\right)+\sqrt{\tfrac{\eta}{c} } (T-t)\right)
	\\
		a_2^4(t)&=\sqrt{\tfrac{c\eta}{c\eta - \gamma^2}}\text{sech}\left(\sqrt{\tfrac{\eta}{c}}(t-T)-\tanh
   ^{-1}\left(\tfrac{\gamma }{\sqrt{c\eta }}\right)\right)-1
   \\
	a_2^2(t)&= \left[\sqrt{c \eta } \sinh \left(\sqrt{\tfrac{\eta}{c} }
   (t-T)\right)+\eta( T-t)-\gamma-\gamma  \cosh \left(\sqrt{\tfrac{\eta}{c} }
   (t-T)\right)\right]\left(a_2^4(t)+1\right)
  	\\
   	a_2^3(t)&=	\left[\left( \left(e^{t-T}-1\right)
  \gamma +c+\eta 
   \left(T-t-1+e^{t-T}\right)\right)+\sqrt{\tfrac{c}{\eta}} \gamma   \sinh
   \left(\sqrt{\tfrac{\eta}{c}}(t-T)\right) \right.
   \\
  & \quad  \left.- c  \cosh
   \left(\sqrt{\tfrac{\eta}{c} } (t-T)\right)\right]  \left(a_2^4(t)+1\right),
\end{align*}
}for $\eta>0$ and $c\eta - \gamma^2 >0$, and
{\footnotesize
\begin{equation*}
   a_2^1(t)= \frac{c \gamma }{2 c-2 \gamma  t+2 \gamma  T}, \quad
   a_2^4(t)=\frac{\gamma  (t-T)}{c+\gamma  (T-t)}, \quad
   a_2^2(t)= 0, \quad
   a_2^3(t)=	-\frac{c \gamma \left( T-t-1+e^{t-T}\right)}{c+\gamma  (T-t)},
\end{equation*}
}for $\eta=0$. 

Notice that the right-hand side of the SDE for the price in \eqref{eq:System price LQ} does not include $\varpi$. Therefore, using the previous formulas, the price is explicitly given in \eqref{eq:System price LQ}-\eqref{eq:Initial price LQ} by the initial conditions $\tilde{m}_0 \in \Pp(\Rr)$, $q_0\in \Rr$, the supply process $Q$, and the parameters $T,~\eta,~\gamma,~c,~\kappa$, and $\zeta$. Moreover, we can compute measures of variability between price and supply, such as the covariance
\begin{align*}
	\mbox{Cov}\left(Q_t , \varpi_t^\infty\right) = & -\tfrac{\sigma_s^2}{2}  \left(e^t-1\right) e^{-2 t-T} \left(c \left(e^t+1\right)
   e^T+\gamma  e^t \left(e^t-2 e^T+1\right) \right.
   \\
   & \left. +\eta  \left(e^{t+T} (-2 t+2 T-1)+e^t+e^{2
   t}-e^T\right)\right),
\end{align*}
for $\eta>0$ and $c\eta - \gamma^2 >0$, and
\begin{align}\label{eq:LQCorrelation0}
	\mbox{Cov}\left(Q_t , \varpi_t^\infty\right) = - \tfrac{\sigma_s^2}{2}  \left(e^t-1\right) e^{-2 t-T} \left(c\left(e^t+1\right) e^T-\gamma  e^t \left(e^t-2 e^T+1\right)\right)
\end{align}
for $\eta=0$. The previous formulas verify that the intuitive negative correlation between price and supply holds in our model. For instance, in the case $\eta =0$ , from \eqref{eq:LQCorrelation0} we have
 \[
	\tfrac{d^2}{dt^2}\mbox{Cov}\left(Q_t , \varpi_t^\infty\right) = \tfrac{\sigma_s^2}{2}  e^{-2 t-T} \left(4 c e^{T} +2 \gamma e^{T+t} + \gamma e^t\left( e^{2t}-1\right)\right) \geq 0,
\]
$\mbox{Cov}\left(Q_0 , \varpi_0^\infty\right) = 0$, and $\mbox{Cov}\left(Q_T , \varpi_T^\infty\right)  = -\tfrac{\sigma_s^2}{2} c \left( 1 - e^{-2T} \right)<0$; that is, \eqref{eq:LQCorrelation0} is a convex function which is $0$ at $t=0$, negative at $t=T$, and thus negative on $(0,T]$. We use the previous measures of joint variability between supply and price in Section \ref{sec:numerical}.

%{\tiny
%\[
%\dot{a}_1^1(t)=\frac{\sqrt{\eta } \left(\bar{Q} \left(c^{3/2} 
%   \left(e^T-e^t\right)\tfrac{\gamma}{\sqrt{c}}-c \eta  \left(e^T (-t+T-1)+e^t\right)\right)+e^T \left(a_1^1(t) \left(\gamma  \cosh
%   \left(\frac{\sqrt{\eta } (t-T)}{\sqrt{c}}\right)-\sqrt{c} \sqrt{\eta } \sinh
%   \left(\frac{\sqrt{\eta } (t-T)}{\sqrt{c}}\right)\right)+c \eta  \kappa \right)\right)}{c
%   \sqrt{\eta } e^T \cosh \left(\frac{\sqrt{\eta } (t-T)}{\sqrt{c}}\right)-\sqrt{c} \gamma  e^T
%   \sinh \left(\frac{\sqrt{\eta } (t-T)}{\sqrt{c}}\right)}
%\]}
\subsubsection{Convergence of the finite game to the continuum game} For the linear-quadratic structure, \eqref{eq: price-adjoint var relation}, \eqref{eq:priceN players formula} and \eqref{eq: supply dynamics} show that $\varpi^N$ is given by the SDE system
\begin{equation}\label{eq:SDE N players}
	\begin{cases} 
	d \overline{X}_t = Q_t ~ dt
	\\
	\overline{X}_0=\overline{x}_0
	\\
	dQ_t =	b^S(Q_t,t) dt +\sigma^S(Q_t,t) dW_t
	\\
	Q_0=q_0
	\\
	d\varpi_t^N =	\left(\eta(\overline{X}_t - \kappa) - c b^S(Q_t,t) \right) dt  - \left(\overline{Z}_t + c \sigma^S(Q_t,t)\right) dW_t
	\\
	\varpi_0^N=-(\overline{P}_0 + cq_0),
	\end{cases}
\end{equation}
and, by \eqref{eq: Q Coeffiecients relation price supply}, $\varpi^\infty$ is given by the SDE system
\begin{equation*}
	\begin{cases} 
	d \overline{X}_t^\infty = Q_t ~ dt
	\\
	\overline{X}_0^\infty=\mu_0
	\\
	dQ_t =	b^S(Q_t,t) dt +\sigma^S(Q_t,t) dW_t
	\\
	Q_0=q_0
	\\
	d\varpi_t^\infty =	\left(\eta(\overline{X}_t^\infty - \kappa) - c b^S(Q_t,t) \right) dt  - \tfrac{a_2^3(t)+c}{a_2^4(t)+1}\sigma^S(Q_t,t) dW_t
	\\
	\varpi_0^\infty =-\left(\mu_0 \left(2 a_ 2^1(0)+a_ 2^2(0)\right)+q_0\left(  a_ 2^3(0)+c \right) +a_ 1^1(0)\right)(a_ 2^4(0)+1)^{-1}.
	\end{cases}
\end{equation*}
The previous two systems show that the convergence of $\varpi^N$ to $\varpi^\infty$ as $N\to \infty$ in $\mathbb{H}_{\mathbb{F}}$ (which corresponds to having two It\^{o} processes described by the same SDE) relies on the convergence of $\overline{x}_0$ to $\mu_0$ as $N \to \infty$, which is guaranteed by the law of large numbers when the initial states of the $N$ players, $x_0^i$, are sampled independently and with identical distribution $\tilde{m}_0$. 

%For a general cost and dynamics structure, we require a careful examination of the convergence of the random empirical measure
%\[
%	\hat{m}^N_t := \frac{1}{N} \sum_{i=1}^N \delta_{X_t^i}
%\]
%to the random measure $m^{\infty}$ on $\Rr^4 \times \Omega$ defined by
%\begin{align*}
%	& \int_{\Rr^4} f(x,\overline{x},q,w) m^{\infty}_t (dx\times d\overline{x} \times dq \times dw) 
%	\\
%	& = \int_{\Rr^4} f(X_t,\overline{X}^{\infty}_t,Q_t,\varpi_t^\infty) \tilde{m}_0(dx) \times \delta_{\mu_0}(d\overline{x}) \times \delta_{q_0}(dq) \times \delta_{w_0^\infty}(dw),
%\end{align*}
%for $f \in C_b(\Rr^4)$, and where $(X,\overline{X}^\infty,Q,\varpi^\infty)$ satisfies \eqref{eq: LQ Augmented dynamics}. A main challenge is that the previous are not probability measures but random probability measures due to the underlying dependence on the common noise. Therefore, the use of Wasserstein metrics (which is employed in the deterministic setting) to describe a notion of convergence becomes more intricate due to the lack of compactness of the space of random measures (see \cite{carmona2018probabilistic}).

\section{Numerical Results and Real Data}\label{sec:numerical}

Here, we address the numerical computation of the price both for the finite and the continuum number of players. In the finite case, we discretize  the minimization problem \eqref{eq: Nagent min problem} using a Binomial Tree representation of the noise. The computation of the price reduces to a finite high-dimensional optimization problem. We illustrate this method with the linear-quadratic model of Section \ref{Sec: LQ model}, and we show the convergence, as the number of players grows, to the solution of the continuum model. Then, we specialize the models to simulate the price obtained using real data from the electricity grid in Spain.

\subsection{Numerical approximation of the finite players model} In this section, we numerically approximate the price $\varpi^N$ solving Problem \ref{problem: problem} for $N$ players using a discrete approximation of the minimization problem \eqref{eq: Nagent min problem}. Our formulation admits a general structure on the supply dynamics and cost functions, including the linear-quadratic model of Section \ref{Sec: LQ model} as a particular case. Our approach relies on a discrete representation of the common noise using a Binomial Tree.

\subsubsection{Binomial Tree approximation}

In our model, the common noise corresponds to the Brownian Motion in \eqref{eq: supply dynamics}, which specifies the supply dynamics. Thus, every realization of the Brownian motion path determines a realization for both supply and price. For instance, \eqref{eq:SDE N players} provides the supply and price paths for any realization of the noise, which is a feature of the linear-quadratic model. However, for general dynamics on the supply and non-quadratic cost, even if the supply process can be exactly simulated, there is no guarantee that the price process can be explicitly solved. Therefore, we consider a finite-dimensional approximation of the noise process. This implies that both supply and price become finite-dimensional objects as well. The advantage of this numerical approach is that our model becomes a finite-dimensional convex optimization problem, which can be solved using standard methods. We adopt a Binomial Tree representation of the Brownian motion. The convergence results for schemes similar to the one presented here are studied in \cite{Touzi2013},  Chapter 12.

Let $T>0$ be the time horizon and $M\in\mathbb{N}$ be the number of time steps.
Let $h=T/M$, and $t_k = kh$ for $k=0,\ldots,M$.
We use the Forward-Euler discretization for the supply 
\begin{equation}\label{eq:Qdiscrete Forward Euler}
Q_{k+1}=Q_{k}+b^S(Q_k,k)h+\sigma^S(Q_k,k)\Delta W_k, \quad k=0,\ldots,M-1,
\end{equation}
where $\Delta W_0=0$, and $\Delta W_k$, for $k=1,\ldots,{M-1}$, are the discrete approximation of the Brownian motion.
We select $\Delta W_k=\sqrt{h}\xi_k$, where $\xi_k$ are i.i.d.
(binomial) random variables taking the values $\pm 1$ with the same probability. % (see Figure \ref{image:BTDiagram}).
Hence, at time level $k$, $Q_k\in \left\{ Q_{1,k},\ldots,Q_{2^k,k}\right\}$ (see Figure \ref{image:BTDiagram Q}).
The discrete $\sigma$-algebras are $\mathcal{F}_0=\left\{\emptyset,\Omega\right\}$, and $\mathcal{F}_k=\sigma\left(\Delta W_{j}:~ 0\leq j \leq k\right)$ for $k=1,\ldots,M$.
Let $\mathbf{v}^i=(\mathrm{v}^i_0,\ldots,\mathrm{v}^i_{M-1})$ denote the discrete approximation of the control for agent $i$ obtained from the Binomial Tree.
The measurability condition w.r.t.
$\mathcal{F}_k$ means that $\mathrm{v}^i_k \in \left\{ \mathrm{v}^i_{1,k},\ldots,\mathrm{v}^i_{2^k,k}\right\}$ for $0 \leq k \leq M-1$, where the variables $\mathrm{v}^i_{j,k}$ are the decision variables for the discrete optimization problem.
Notice that at time level $k$, the expectation operator becomes an average over $2^k$ values.
We compute $X^i_{k+1}$, the position of the agent $i$ at time $t_{k+1}$, using the Forward-Euler formula in \eqref{eq: Agent dynamics}; that is,
\[
	X^i_{k+1}=X^i_{k}+h \mathrm{v}^i_{k}  , \quad k=0,\ldots,M-1,
\] 
where $X^i_0=x_0^i$.
Because the initial condition $\bx_0=(x_0^1,\ldots,x_0^N)\in \Rr^N$ is given, the positions $X^i_{k+1}$, for $1\leq i \leq N$ and $0\leq k \leq M-1$, depend only on the velocity variables.
\begin{remark}
Because the random variables $\xi_k$ are binomial, the discrete noise process $\Delta W$ has $2^{M}$ realizations. %, as illustrated in Figure \ref{image:BTDiagram} for $M=2$ time steps.
Accordingly, as shown in Figure \ref{image:BTDiagram Q}, each realization of the noise process determines one realization of the supply process. For ease of notation, we do not index the realization to which the variable $Q_{j,k}$ corresponds.
Likewise, we denote by $X^i_{j,k}$ the position of agent $i$ at time level $k$ computed using the velocity variable $\mathrm{v}^i_{j,k}$, where both variables correspond to the same realization of the noise.

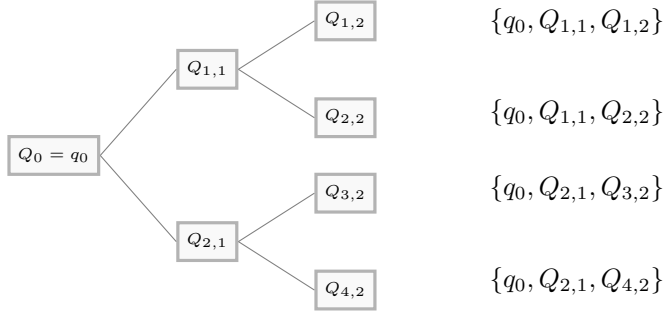
\begin{figure}[ht]
\centering
\begin{tikzpicture}[
roundnode/.style={circle, draw=gray!60, fill=gray!5, very thick, minimum size=6mm},
squarednode/.style={rectangle, draw=gray!60, fill=gray!5, very thick, minimum size=5mm},
]
	\node[squarednode] (Q0) {{\tiny $Q_0 =q_0$}};
	
	\node[squarednode] (Q1p1)[above right=0.6cm and 1cm of Q0]{{\tiny $Q_{1,1}$}};
	\node[squarednode] (Q1m1)[below right=0.6cm and 1cm of Q0]{{\tiny $Q_{2,1}$}};
	
	\node[squarednode] (Q1p1p1)[above right=0.1cm and 1cm of Q1p1]{{\tiny $Q_{1,2}$}};
	\node[squarednode] (Q1p1m1)[below right=0.1cm and 1cm of Q1p1]{{\tiny $Q_{2,2}$}};
	\node[squarednode] (Q1m1p1)[above right=0.1cm and 1cm of Q1m1]{{\tiny $Q_{3,2}$}};
	\node[squarednode] (Q1m1m1)[below right=0.1cm and 1cm of Q1m1]{{\tiny $Q_{4,2}$}};
  				
	\draw[-,draw=gray] (Q0.east) .. controls +(up:0mm) and +(down:0mm) ..  (Q1p1.west);
	\draw[-,draw=gray] (Q0.east) .. controls +(up:0mm) and +(down:0mm) ..  (Q1m1.west);	
	
	\draw[-,draw=gray] (Q1p1.east) .. controls +(up:0mm) and +(down:0mm) ..  (Q1p1p1.west);
	\draw[-,draw=gray] (Q1p1.east) .. controls +(up:0mm) and +(down:0mm) ..  (Q1p1m1.west);	
	
	\draw[-,draw=gray] (Q1m1.east) .. controls +(up:0mm) and +(down:0mm) ..  (Q1m1p1.west);
	\draw[-,draw=gray] (Q1m1.east) .. controls +(up:0mm) and +(down:0mm) ..  (Q1m1m1.west);			
	
	\node[text width=6cm] (T1)[above right=1.2cm and 5cm of Q0]
    {$\{q_0,Q_{1,1},Q_{1,2}\}$};
	\node[text width=6cm] (T2)[below=0.6cm of T1]
    {$\{q_0,Q_{1,1},Q_{2,2}\}$};
	\node[text width=6cm] (T3)[below=0.4cm of T2]
    {$\{q_0,Q_{2,1},Q_{3,2}\}$};
	\node[text width=6cm] [below=0.6cm of T3]
    {$\{q_0,Q_{2,1},Q_{4,2}\}$};
\end{tikzpicture}
\caption{Binomial tree diagram of the supply for $M=2$ time steps.}
\label{image:BTDiagram Q}
\end{figure}
\end{remark}

At time $t_k$, the discrete price process $\varpi$ takes the value $\varpi_k $, and the measurability condition w.r.t.
$\mathcal{F}_k$ means that $\varpi_k \in \left\{ \varpi_{1,k},\ldots,\varpi_{2^k,k}\right\}$, where the values $\varpi_{j,k}$ are unknown.
The discrete version of the optimal control problem \eqref{eq: Nagent min problem} reads

\begin{align}\label{eq: Discrete Nagent min problem}
\inf_{\substack{\mathbf{v}=(\mathbf{v}^1,\ldots,\mathbf{v}^N)\\ \mathrm{v}^i_k \in L^2_{\mathcal{F}_k}}}  & \frac{1}{N} \sum_{i=1}^N \left( \sum_{k=0}^{M-1} \frac{1}{2^k}\sum_{j=1}^{2^k} h\left(L(X_{j,k}^i,\mathrm{v}^i_{j,k}) + \varpi_{j,k} \mathrm{v}_{j,k}^i\right) + \frac{1}{2^M}\sum_{j=1}^{2^M} \Psi(X^i_{j,M}) \right) \nonumber
\\
\mbox{subject to} & \quad \frac{1}{N}\sum\limits_{i=1}^N \mathrm{v}_{j,k}^i=Q_{j,k} ~\mbox{ and } ~ X^i_{j,k}=X^i_{j,k-1} + h \mathrm{v}^i_{j,k-1} \nonumber
\\
& \quad \mbox{for }1 \leq j \leq 2^k, ~ 0\leq k\leq M-1, ~ 1\leq i \leq N.
\end{align}
\begin{remark}\label{Remark:UptoT}
Because we consider the Forward-Euler discretization of the stochastic processes $Q$ and $X$, the discrete approximation in \eqref{eq: Discrete Nagent min problem} of the integral \eqref{eq: Functional per agent} does not contain values at terminal time.
Moreover, since the terminal position $X^i_{j,M}$ is a function of %$(X^i_{j,k},v^i_{j,k})$ for $0 \leq k \leq M-1$
previous positions and velocities, the balance condition up to time-step $M-1$ ultimately determines the solution of \eqref{eq: Discrete Nagent min problem} up to time-step $M-1$; that is, the processes $\bv$ and $\varpi$ are not computed at terminal time $T$.
In contrast, the Hamilton-Jacobi approach adopted in Section \ref{Sec: LQ model} provides the values for both $\bv$ and $\varpi$ up to terminal time. Therefore, we consider the trajectories up to time step $M-1$.
\end{remark}

As in Section \ref{sec: NAgent problem}, we formulate a problem equivalent to \eqref{eq: Discrete Nagent min problem} for which the price corresponds to the Lagrange multiplier associated with the balance condition.
Using the discrete balance condition in \eqref{eq: Discrete Nagent min problem}, we write
\begin{gather*}
\frac{1}{N} \sum_{i=1}^N \sum_{k=0}^{M-1} \sum_{j=1}^{2^k} \frac{1}{2^k} h\varpi_{j,k} \mathrm{v}_{j,k}^i=\sum_{k=0}^{M-1} \sum_{j=1}^{2^k}  \frac{1}{2^k} h\varpi_{j,k} Q_{j,k}.
\end{gather*}
Replacing the left-hand side of the previous equation in the functional to minimize in \eqref{eq: Discrete Nagent min problem}, we get 
\[
	\frac{1}{N} \sum_{i=1}^N \left( \sum_{k=0}^{M-1} \frac{1}{2^k}\sum_{j=1}^{2^k} h L(X_{j,k}^i,\mathrm{v}^i_{j,k})  + \frac{1}{2^M}\sum_{j=1}^{2^M} \Psi(X^i_{j,M}) \right) +\sum_{k=0}^{M-1} \sum_{j=1}^{2^k}  \frac{1}{2^k} h\varpi_{j,k} Q_{j,k},
\]
where the last term is independent of $\mathbf{v}$.
Hence, we consider the equivalent discrete minimization problem
\begin{gather}\label{eq:NQ discrete Nagent min no price}
\inf_{\substack{\mathbf{v}=(\mathbf{v}^1,\ldots,\mathbf{v}^N)\\ \mathrm{v}^i_k \in L^2_{\mathcal{F}_k}}} \frac{1}{N} \sum_{i=1}^N \left( \sum_{k=0}^{M-1} \frac{1}{2^k} \sum_{j=1}^{2^k}  hL(X_{j,k}^i,\mathrm{v}^i_{j,k}) + \frac{1}{2^M} \sum_{j=1}^{2^M} \Psi(X^i_{j,M})\right)
\\
\mbox{subject to }~   g_{j,k}(\mathbf{v})=0, ~ X^i_{j,k+1}=X^i_{j,k} + h \mathrm{v}_{j,k}^i ,   ~ \mbox{for }1 \leq j \leq 2^k, ~ 0\leq k\leq M-1,
\nonumber
\end{gather}
where
\begin{equation}\label{def:NQ aux g}
	g_{j,k}(\mathbf{v}) := \frac{1}{N}\sum\limits_{i=1}^N \mathrm{v}_{j,k}^i-Q_{j,k}, \quad \mbox{for }1 \leq j \leq 2^k, ~ 0\leq k\leq M-1.
\end{equation}
To solve this minimization problem with equality constraints, we consider the augmented Lagrangian 
\begin{equation}\label{eq: Augmented L Nagent}
	\tilde{L}(\mathbf{v},\bm{\lambda})=\frac{1}{N} \sum_{i=1}^N \left( \sum_{k=0}^{M-1} \frac{1}{2^k}\sum_{j=1}^{2^k} hL(X_{j,k}^i,\mathrm{v}^i_{j,k}) + \frac{1}{2^M} \sum_{j=1}^{2^M} \Psi(X^i_{j,M})\right) + \sum_{k=0}^{M-1} \sum_{j=1}^{2^k} \lambda_{j,k} g_{j,k}(\mathbf{v}),
\end{equation}
where $\bm{\lambda}$ is a vector with components $\lambda_{j,k}\in \Rr$, for $j=1,\ldots,2^k$ and $k=0,\ldots,M-1$.
If the functions $g_{j,k}$ are convex, any minimizer $\mathbf{v}$ of \eqref{eq:NQ discrete Nagent min no price} is characterized by the existence of a multiplier $\bm{\lambda}$ such that $(\mathbf{v},\bm{\lambda})$ solves the Karush-Kuhn-Tucker condition (\cite{BoydConvex}, Section 5.5.3)
\begin{gather}\label{eq:NQ KKT condition}
D_{\mathbf{v}} \left(	\frac{1}{N} \sum_{i=1}^N \left( \sum_{k=0}^{M-1} \frac{1}{2^k}\sum_{j=1}^{2^k} hL(X_{j,k}^i,\mathrm{v}^i_{j,k}) + \frac{1}{2^M} \sum_{j=1}^{2^M} \Psi(X^i_{j,M})\right)\right) 
\\
+ \sum_{k=0}^{M-1} \sum_{j=1}^{2^k} \lambda_{j,k} D_{\mathbf{v}} g_{j,k}(\mathbf{v}) =0, \nonumber
\end{gather}
where $D_{\mathbf{v}}$ denotes the gradient w.r.t.
the variables $v^i_{j,k}$ for $i=1,\ldots,N$, $k=0,\ldots,M-1$, and $j=1,\ldots,2^k$.
In turn, any solution $(\mathbf{v},\bm{\lambda})$ of \eqref{eq:NQ KKT condition} defines a price process.
To see this, we use the definition of $g_{j,k}$ in \eqref{def:NQ aux g} to write the last term in \eqref{eq: Augmented L Nagent} as
\begin{align}\label{eq:Aux NQ g term}
	& \sum_{k=0}^{M-1} \sum_{j=1}^{2^k} \frac{1}{2^k}\left(2^k\lambda_{j,k}\right) g_{j,k}(\mathbf{v}) \nonumber
	\\
	&= \frac{1}{N}\sum\limits_{i=1}^N \sum_{k=0}^{M-1} \sum_{j=1}^{2^k} \frac{1}{2^k}\left(2^k\lambda_{j,k} \right) \mathrm{v}_{j,k}^i- \sum_{k=0}^{M-1} \sum_{j=1}^{2^k} \lambda_{j,k} Q_{j,k}.
\end{align}
Notice that the last term on the right-hand side of \eqref{eq:Aux NQ g term} is independent of $\mathbf{v}$.
Therefore, any minimizer of the functional
\begin{align*}
\bv \mapsto & \frac{1}{N} \sum_{i=1}^N \bigg( \sum_{k=0}^{M-1} \frac{1}{2^k}\sum_{j=1}^{2^k} h\left(L(X_{j,k}^i,\mathrm{v}^i_{j,k}) + 2^k\lambda_{j,k} \mathrm{v}_{j,k}^i\right) 
+ \frac{1}{2^M}\sum_{j=1}^{2^M} \Psi(X^i_{j,M}) \bigg) 
\\
& - \sum_{k=0}^{M-1} \sum_{j=1}^{2^k} \lambda_{j,k} Q_{j,k},
\end{align*}
subject to the constraints 
\begin{equation}\label{eq:DiscreteConstrains}
\frac{1}{N}\sum\limits_{i=1}^N \mathrm{v}_{j,k}^i=Q_{j,k} ~\mbox{ and } ~ X^i_{j,k+1}=X^i_{j,k} + h \mathrm{v}_{j,k}^i,
\end{equation}
for $1 \leq j \leq 2^k$, $0\leq k\leq M-1$, and $1\leq i \leq N$, is also a minimizer of the problem
\begin{align*}
\inf_{\substack{\mathbf{v}=(\mathbf{v}^1,\ldots,\mathbf{v}^N)\\ \mathrm{v}^i_k \in L^2_{\mathcal{F}_k}}}  &  \frac{1}{N} \sum_{i=1}^N \left( \sum_{k=0}^{M-1} \frac{1}{2^k}\sum_{j=1}^{2^k} h\left(L(X_{j,k}^i,\mathrm{v}^i_{j,k}) + 2^k\lambda_{j,k} \mathrm{v}_{j,k}^i\right) + \frac{1}{2^M}\sum_{j=1}^{2^M} \Psi(X^i_{j,M}) \right) 
\end{align*}
subject to \eqref{eq:DiscreteConstrains}, which corresponds to \eqref{eq: Discrete Nagent min problem} when 
\begin{equation}\label{eq:BTprice Lagrange multiplier formula}
\varpi_{j,k}:=2^k\lambda_{j,k} \quad \mbox{for}~ 1 \leq j \leq 2^k,~ 0\leq k\leq M-1.
\end{equation}
Hence, the minimizer $\mathbf{v}$ of \eqref{eq:NQ discrete Nagent min no price} and $\varpi$, as defined before, solve \eqref{eq: Discrete Nagent min problem}.

\subsection{Numerical tests for the linear-quadratic case}

Here, we implement the previous scheme on the model of Section \ref{Sec: LQ model}, and we illustrate the convergence as the number of players increases.

We assume that the supply $Q$ follows the linear dynamics \eqref{eq:Q SDE mean reverting}, where $\overline{Q}(t) = \sin (2 \pi t)$, $\sigma_s = 0.05$, and $Q_0 = 0.1$. For $N\in \Nn$, the initial values $x_0^1,\ldots,x_0^N$ for the state of the agents are sampled from a normal distribution with mean $0$ and standard deviation $0.1$, which corresponds to $\tilde{m}_0 \sim \mathcal{N}(0,0.1)$ in the continuous model. We refer to the price given by \eqref{eq:BTprice Lagrange multiplier formula}, where $\bm{\lambda}$ is the solution of \eqref{eq:NQ KKT condition}, as $\varpi^N$. The price computed using the Forward-Euler discretization of \eqref{eq: LQ Augmented dynamics} is denoted by $\varpi^\infty$, and it is computed as
\begin{equation}\label{eq:ForwardEulerPriceInfty}
\varpi_{k+1}^\infty=\varpi_{k}^\infty+b^P(X_k,\overline{X}_k ,Q_k,\varpi_k)h+\sigma^P(X_k,\overline{X}_k ,Q_k,\varpi_k)\Delta W_k, 
\end{equation}
$k=0,\ldots,M-1$, where $b^P$ and $\sigma^P$ are given by \eqref{eq: Q Coeffiecients relation price supply} and $\varpi_0$ is given by \eqref{eq:Initial price LQ}. We take $M=11$ time steps, so $h=0.09$. The remaining parameters are selected as follows
\[
	T=1,\; \eta=c=1,\; \kappa=\zeta=0.25,\; \gamma=e^2.
\]
%First, we recall that \eqref{eq:Initial price LQ} provides an exact expression for the initial value of the price.
%Hence, we evaluate the convergence of the initial value for the price obtained by the discrete model against the exact value \eqref{eq:Initial price LQ}. {\color{red} This convergence only makes sense when $N\to \infty$}. 

%Second, we compare the discrete paths of the price computed using the Binomial Tree with those computed using the Hamiton-Jacobi equation. {\color{red} This convergence only makes sense when $N\to \infty$}. 

To illustrate the convergence as $N$ increases, we compute the mean discrete $L^2$ difference
\[
	\overline{\|\varpi^N-\varpi^\infty\|}_{L^2}=\frac{1}{2^{M-1}} \sum_{j=1}^{2^{M-1}} \|\varpi(j)^N - \varpi(j)^\infty\|_{L^2},
\]
where $j$ denotes the realization of the supply for which $\varpi(j)^N$ and $\varpi(j)^\infty$ approximate $\varpi^N$ and $\varpi^\infty$, respectively. This guarantees that the comparison between the trajectories relies on the same source of noise. Thus, recalling that the increments for the Binomial Tree are $\pm \sqrt{h}$, we take the same increments in the discretization of \eqref{eq:System price LQ}. Therefore, the supply in \eqref{eq:Qdiscrete Forward Euler} is the same for both $\varpi^N$ and $\varpi^\infty$. Following Remark \ref{Remark:UptoT}, we consider each path up to time-step $M-1$.

As shown in Table \ref{table:Potential}, $\overline{\|\varpi^N-\varpi^\infty\|}_{L^2}$ decreases as the number of players increases, which in turn corresponds to $\overline{x}_0$ converging to $\mu_0=0$. Figure \ref{image:Sampleeta1} shows all possible paths of the price, up to time-step $M-1$, for the two discrete approximations as $N$ varies. We notice that the convergence of $\varpi^N$ to $\varpi^\infty$ strongly depends on the convergence of the initial value at $t=0$, which is a consequence of the necessary condition $\overline{x}_0 \to \mu_0$ as $N\to \infty$. For some trajectories, we observe negative prices due to market flooding. This behavior has been observed in crude oil futures prices during pandemic times, as the West Texas Intermediate (WTI) crude oil price dropped to negative levels during April 2020, ending at minus $\$37.63$ a barrel. It is possible to elaborate on the computation of market flooding times by studying the first hitting time of the representation \eqref{eq: LQ Augmented dynamics} when $\varpi^\infty$ becomes negative. Figure \ref{image:Samplepatheta1} shows four sample paths of the supply and the corresponding prices $\varpi^N$ (for $N=50$) and $\varpi^\infty$. We observe a negative correlation between supply and price, verified by the covariance between supply and price illustrated in Figure \ref{image:Cov w infty}.

\begin{remark}
Because we approximate $\varpi^\infty$ using a step size $h$, the convergence of the forward scheme \eqref{eq:ForwardEulerPriceInfty} is guaranteed as $h\to \infty$. On the other hand, for $\varpi^N$, it is possible to consider not only the convergence as $N\to \infty$ but also the convergence as $h \to 0$. The former relates to the convergence of a finite game to a continuum (MFG) game. The latter relates to the convergence of the discrete version of noise to its continuous counterpart, which depends on $h\to 0$. 
Regarding the computation of $\varpi^N$, notice that adding one player to a scheme with $M$ time steps requires $2^{M+1}-1$ additional variables. On the other hand, increasing by one the number of time steps for $N$ players requires $ (N+1)2^{M+1}$ additional variables. For this reason, we fixed the number of time steps to be $M=11$ in the previous test and illustrated only the convergence as $N$ increases.
\end{remark}

\begin{remark}
In the large-time behavior of the mean-reverting dynamics \eqref{eq:Q SDE mean reverting}, $Q$ asymptotically  approaches the equilibrium $\overline{Q}$. However, we do not observe the large-time behavior in our simulations because we consider it a finite time horizon problem.
\end{remark}

%The number of variables used to compute $\varpi^N$ is 
%\[
%	(N+1)\sum_{k=0}^{M-1} 2^k,
%\]
%which increases exponentially as the number of time steps increases.

\begin{table}[hbt!]
\renewcommand{\arraystretch}{1.2}
\begin{tabular}{ |c|c|c|c| } 
 \hline
& $N=10$ & $N=30$ & $N=50$\\ 
 \hline
$\left|\overline{x}_0-\mu_0\right|$ & $4.48395*10^{-2}$ &  $5.84296*10^{-3}$ & $5.54493*10^{-4}$\\
$\overline{\|\varpi^N-\varpi^\infty\|}_{L^2}$ & $8.94968*10^{-1}$ &  $4.25748*10^{-1}$ & $2.59851*10^{-1}$\\
Variables ($\varpi^N$) & $22517$ & $63457$ & $104397$ \\
 \hline
\end{tabular}
\caption{Convergence of the initial mean position (first row) and the price processes (second row). Number of variables of the Binomial Tree implementation to compute $\varpi^N$ (third row).}
\label{table:Potential}
\end{table}

\begin{figure}[ht]
\centering
\includegraphics[width=0.4\textwidth]{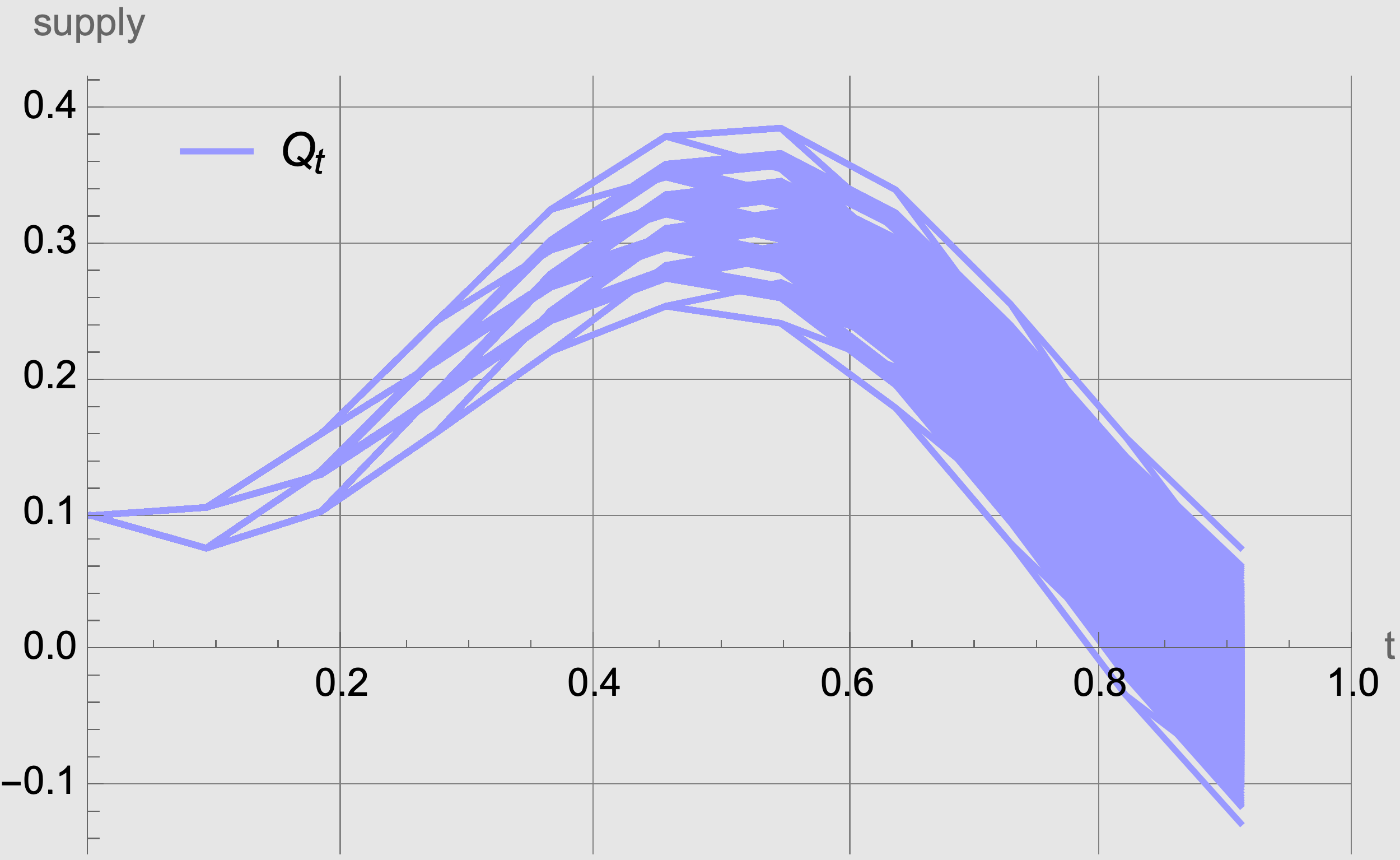}
\includegraphics[width=0.4\textwidth]{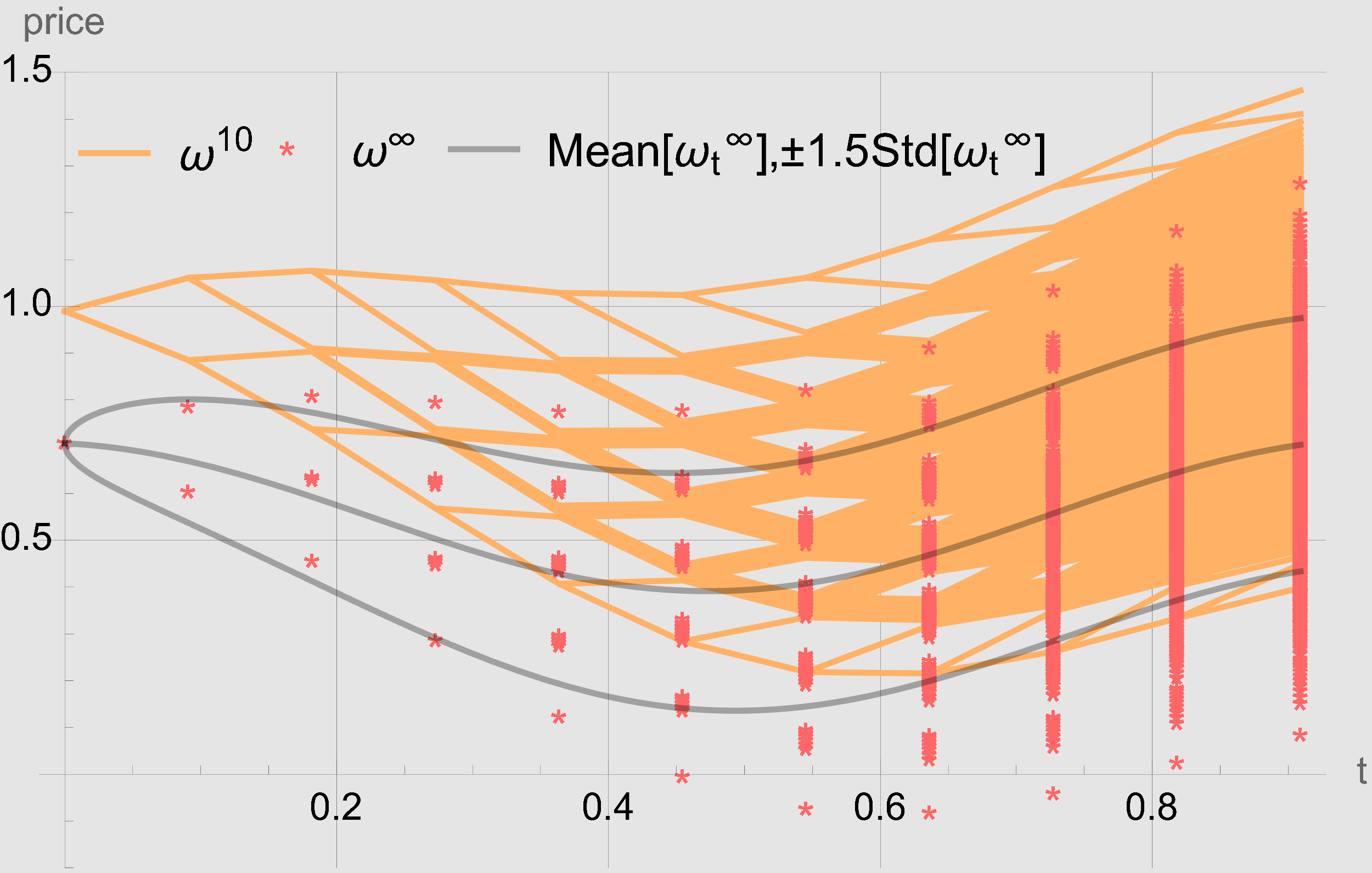}
\includegraphics[width=0.4\textwidth]{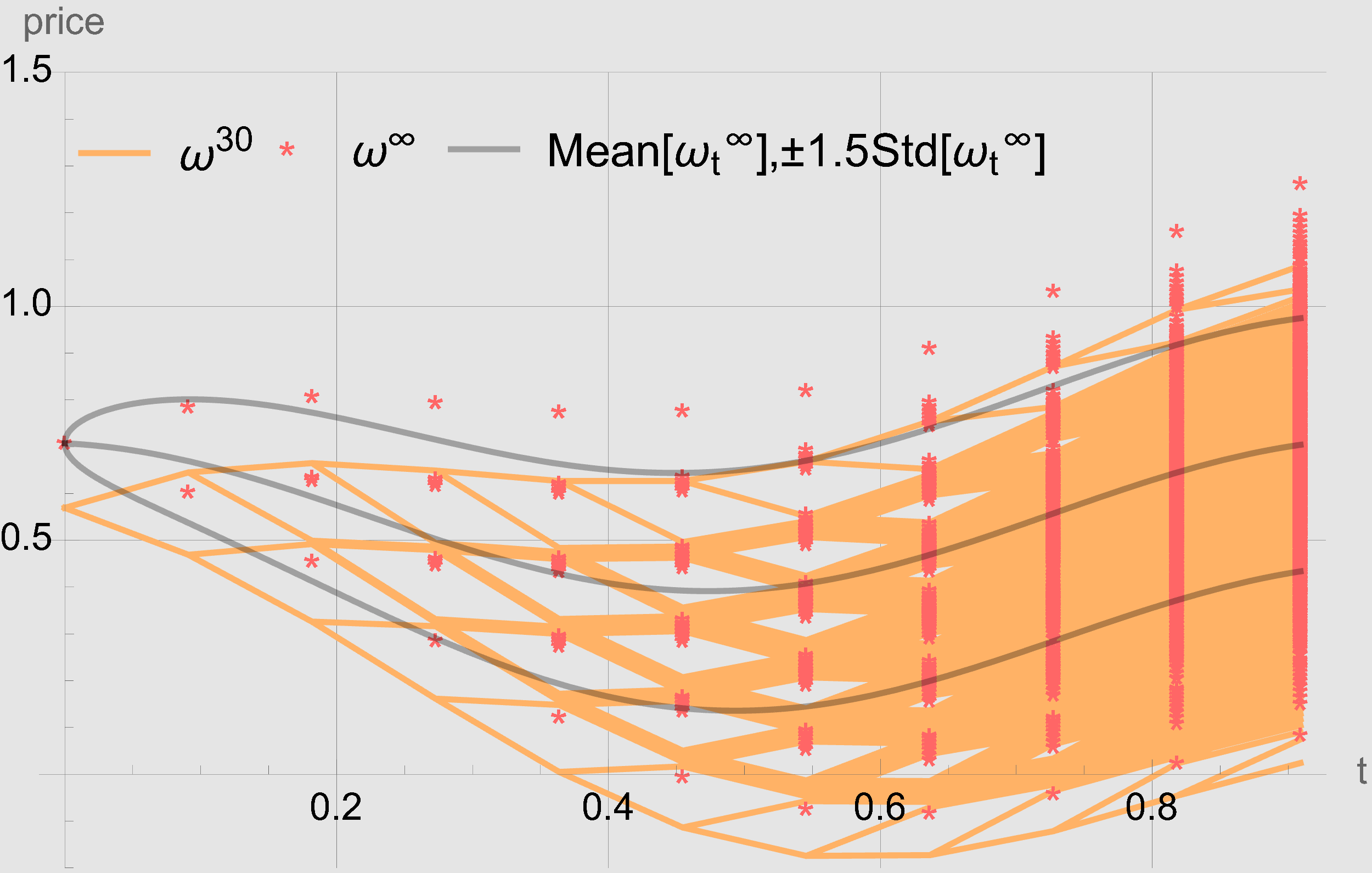}
\includegraphics[width=0.4\textwidth]{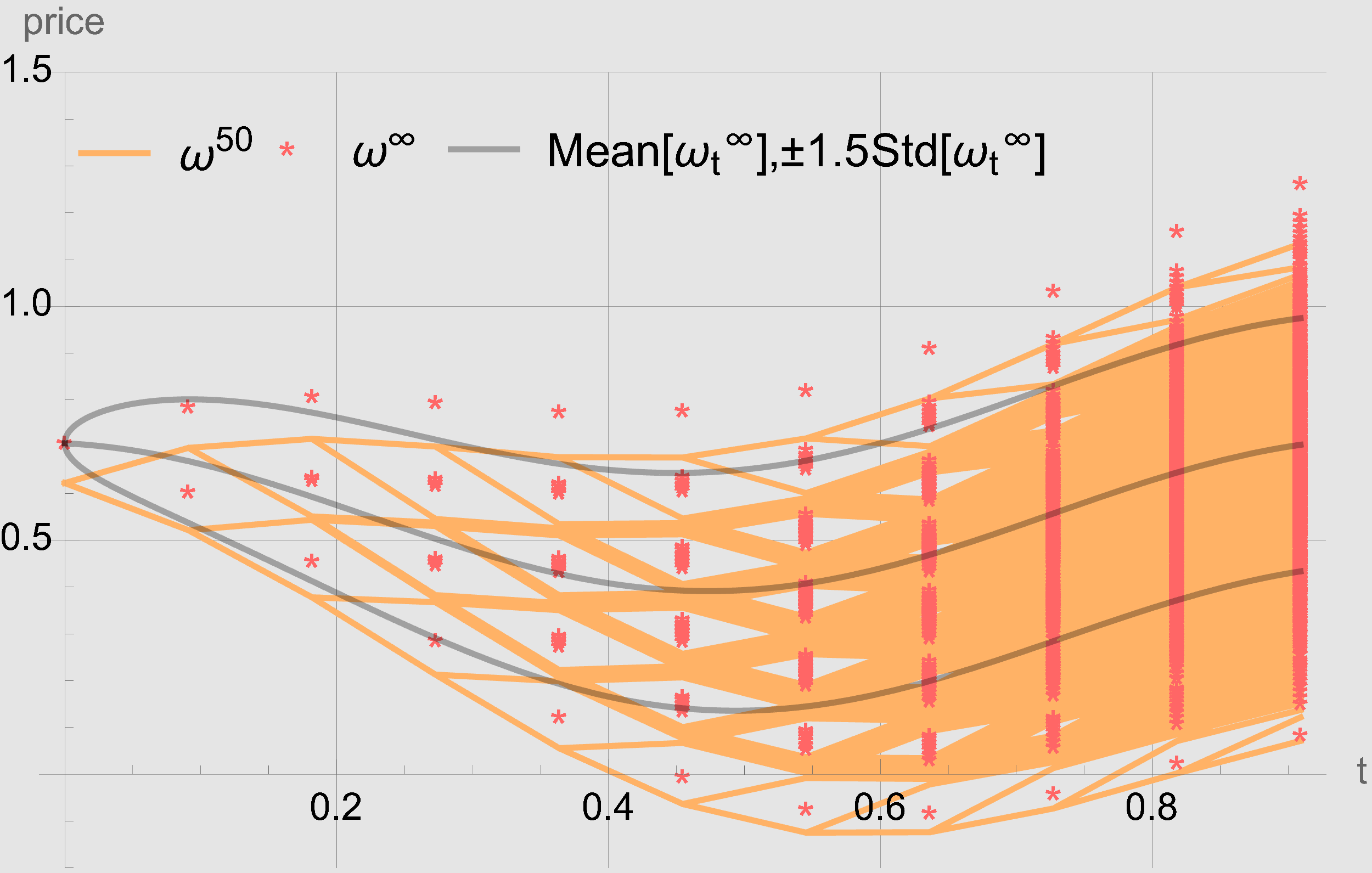}
\caption{(Top-left) Binomial tree supply and prices $\varpi^N$ and $\varpi^\infty$ for $N \in \{10,30,50\}$. Statisticis of $\varpi^\infty$ (gray curves).}
\label{image:Sampleeta1}
\end{figure}

\begin{figure}[ht]
\centering
\includegraphics[width=0.4\textwidth]{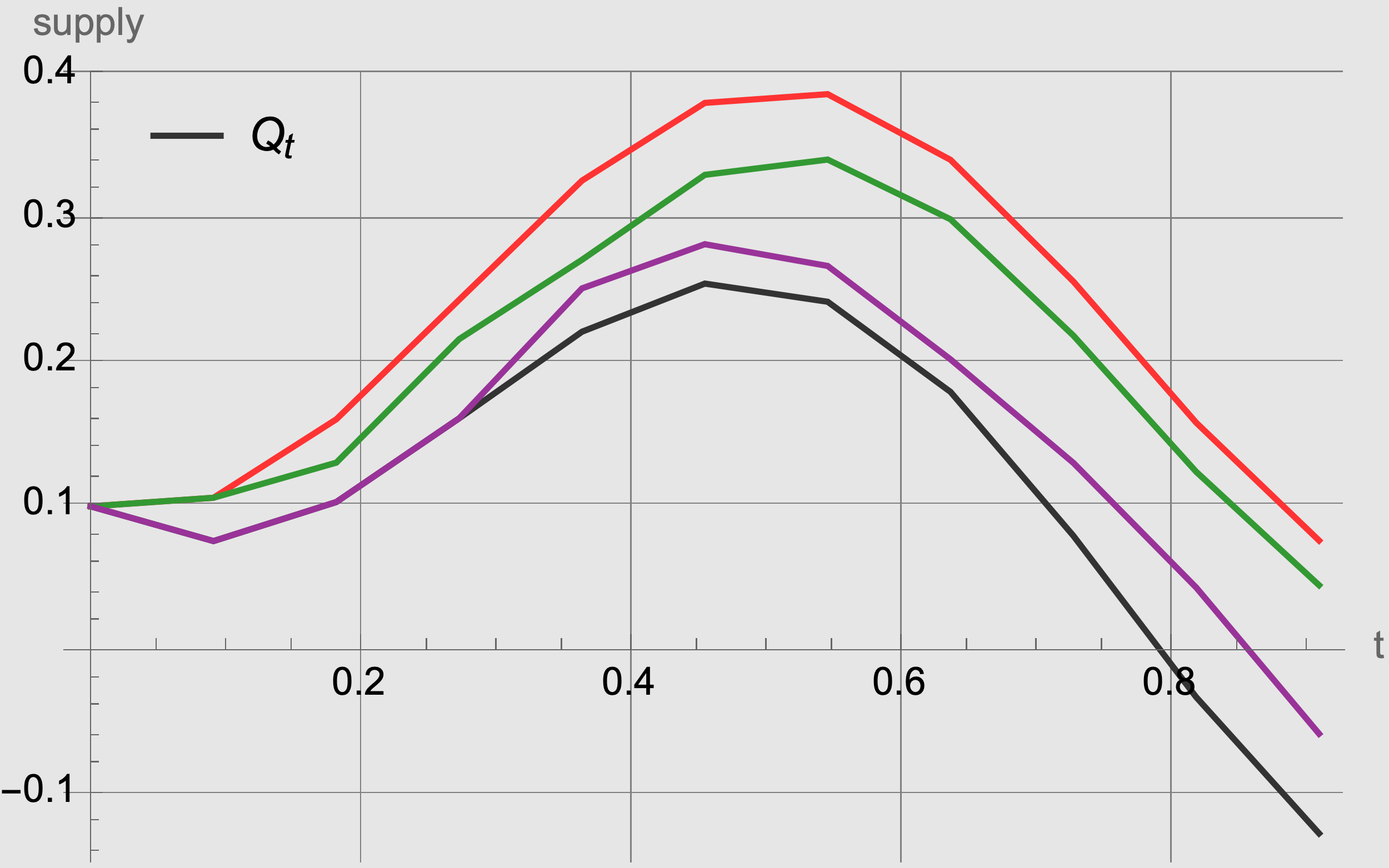}
\includegraphics[width=0.4\textwidth]{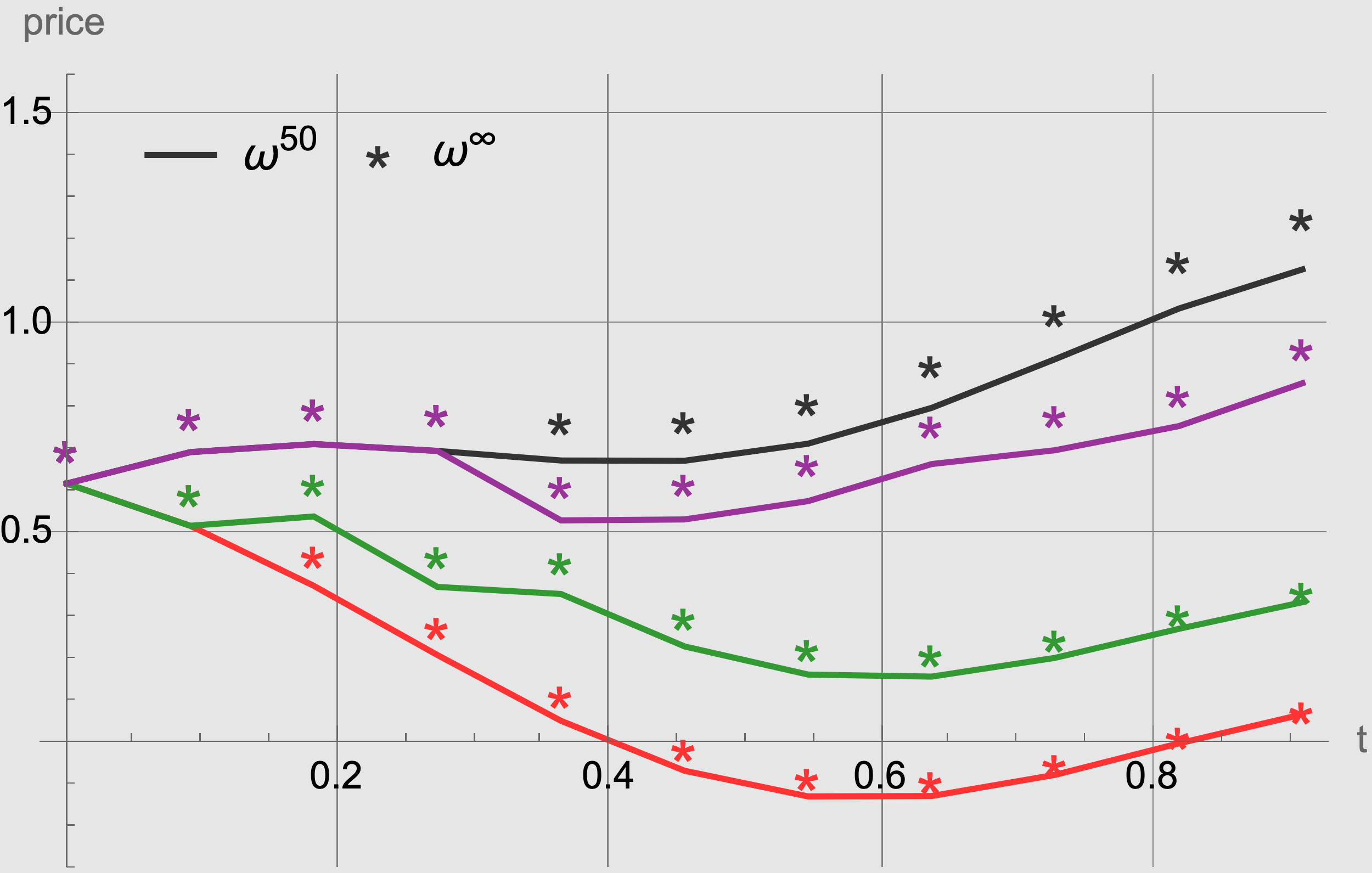}
\caption{(Left) Sample trajectories of the supply. (Right) Corresponding prices $\varpi^N$, for $N=50$, and $\varpi^\infty$ (right).}
\label{image:Samplepatheta1}
\end{figure}

\begin{figure}[ht]
\centering
\includegraphics[width=0.4\textwidth]{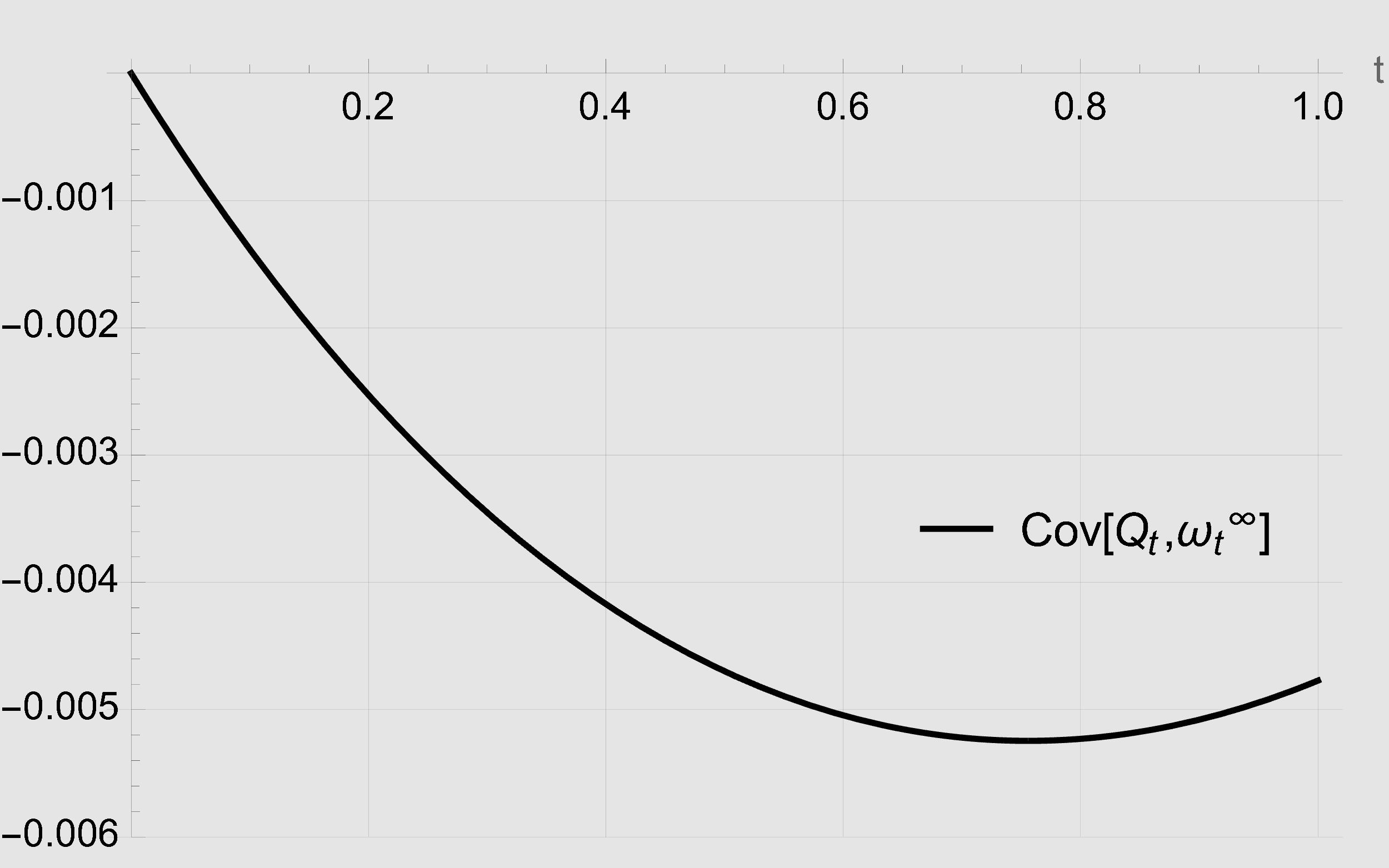}
\caption{Covariance between $Q$ and $\varpi^\infty$.}
\label{image:Cov w infty}
\end{figure}

\subsection{Real data test}
Here, we parametrize the linear-quadratic model of Section \ref{Sec: LQ model} using real data from the electric grid in Spain. Using the parameterized model, we illustrate the prices obtained from the continuum game.

We use the data of consumption and price from the market in Spain. The data is available at the website \href{https://www.esios.ree.es}{https://www.esios.ree.es}. We use the hourly demand (megawatts) for the working days of March 2022, so $T=24$ hours. Recall that in our model, both the instantaneous supply $Q$ and the agents provide electricity to the grid, as we assume that each agent has a device storing $X_t$ units of electricity at time $t$, which can be further stored or traded in the market. On the contrary, in the electricity grid represented by the real data, agents consume electricity, and no interaction with the market takes place. Therefore, the supply $Q$ we take for our model corresponds to minus the demand observed in the data. 

First, we parametrize the supply function. To do so, we assume it is given by
\begin{equation}\label{eq:RealD Q def}
	Q_t = Q_{osc}(t)+Q^{W}_t,
\end{equation}
where $Q_{osc}:[0,T]\to \Rr$ and 
\begin{equation}\label{eq:RealD Vasicek}
	dQ^{W}_t = \theta\left(\overline{Q}-Q^{W}_t\right)dt + \sigma_s dW_t,
\end{equation}	 
for some $\theta,\overline{Q},\sigma_s \in \Rr$. Therefore, $Q$ follows the linear dynamics \eqref{eq:Q linear dynamics} for
\[
	b_0^S(t) = \dot{Q}_{osc}(t) + \theta\left(\overline{Q}-Q_{osc}(t)\right), \quad b_1^S(t) = -\theta, \quad \sigma_0^S(t) = \sigma_s, \quad \sigma_1^S(t) =0.
\]
We fit $Q_{osc}$ using the mean supply of the data set. Assuming that $Q_{osc}$ is a linear combination of sines and cosines, we obtain
\begin{align*}
	Q_{osc}(t)=& 0.883118 \sin (2 \pi  t)+0.675294 \sin (4 \pi  t)+0.190316 \sin
   (6 \pi  t)+0.0248343 \sin (8 \pi  t)
   \\
   & +0.750615 \cos (2 \pi 
   t)-0.25301 \cos (4 \pi  t)-0.0233308 \cos (6 \pi  t)+0.191395
   \cos (8 \pi  t)
   \\
   & -0.027736.
\end{align*}
Because the left-hand side of \eqref{eq:RealD Q def} corresponds to the observed data, we fit the parameters $\theta$, $\overline{Q}$, and $\sigma_s$ using the maximum-likelihood estimator of \eqref{eq:RealD Vasicek} (see \cite{BrigoMercurio2006}, Chapter 3) with time step $h=1/23=0.0434783$. We obtain
\[
	\theta = 35.9957,\quad \overline{Q}=-0.0186653, \quad \sigma_s= 0.860584.
\]
For the initial value of the supply, we take $q_0=Q_{osc}(0)+Q^W_0$, where $Q^W_0$ is the mean of the observed differences $Q_0 - Q_{osc}(0)$. Figure \ref{image:RealD Q price show} depicts the (normalized) supply data and the parameterized supply function. Next, we fit the parameters of the cost functions in \eqref{def:L Psi LQ}. To do so, we use the expression for the deterministic linear-quadratic model in \cite{gomes2018mean}. In this setting, the MFG price is
\[
	\varpi(t)=\eta \left( \kappa - \mu_0 \right) (T-t) + \gamma \left(\zeta-\mu_0 \right) - \eta \int_t^T \int_0^s Q(r)dr \;ds - \gamma \int_0^T Q(s)ds - c Q(t).
\] 
We take $Q=Q_{osc}$ in the previous expression, and we fit the parameters using the mean price of the data and least-squares. We obtain
\begin{align*}
& \eta = 0.00176489, \; \kappa = -371.936, \; c = 0.472603,  
\\
& \gamma  = 0.000877786, \; \zeta =  377.536, \; \mu_0 = 1.74687.
\end{align*}
Then, we can compare the observed price data with the corresponding trajectory of the price $\varpi^\infty$ obtained in \eqref{eq:System price LQ}. Given a supply trajectory from the data $Q^j$, we use \eqref{eq:RealD Q def}, \eqref{eq:RealD Vasicek}, and \eqref{eq:Qdiscrete Forward Euler} to compute the corresponding noise trajectory  $\Delta W_k^j$,
\[
	\Delta W^j_k = \frac{Q_{k+1}^j-Q_{k}^j - h b^S(Q_k^j,k)}{\sigma^S (Q_k^j , k)}, \quad k=0,\ldots,23,
\]
which we use in \eqref{eq:System price LQ}. Figure \ref{image:RealD Q price show} depicts three price trajectories. 

\begin{figure}[ht]
\centering
\includegraphics[width=0.49\textwidth]{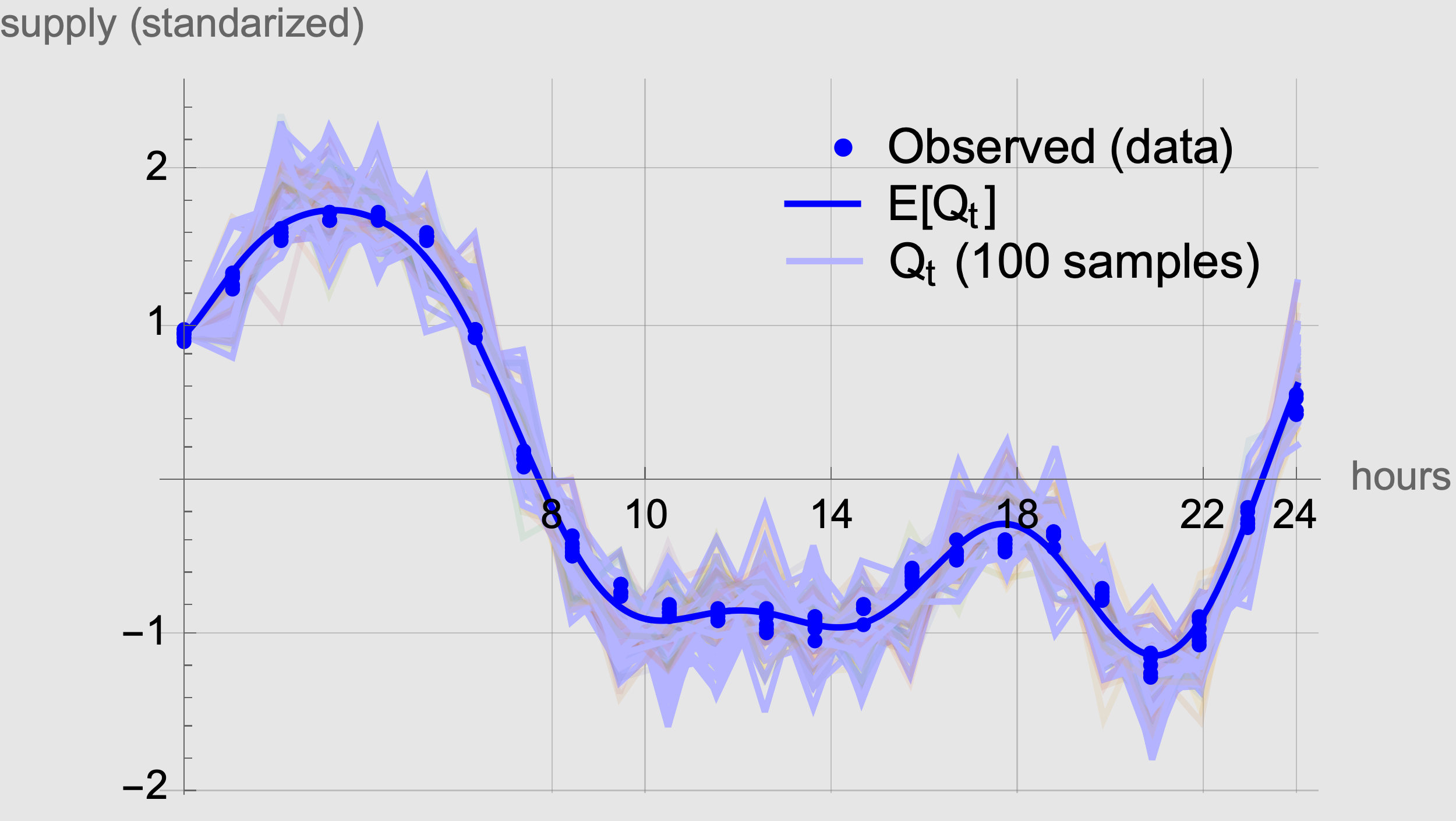}
\hskip0.3cm
\includegraphics[width=0.45\textwidth]{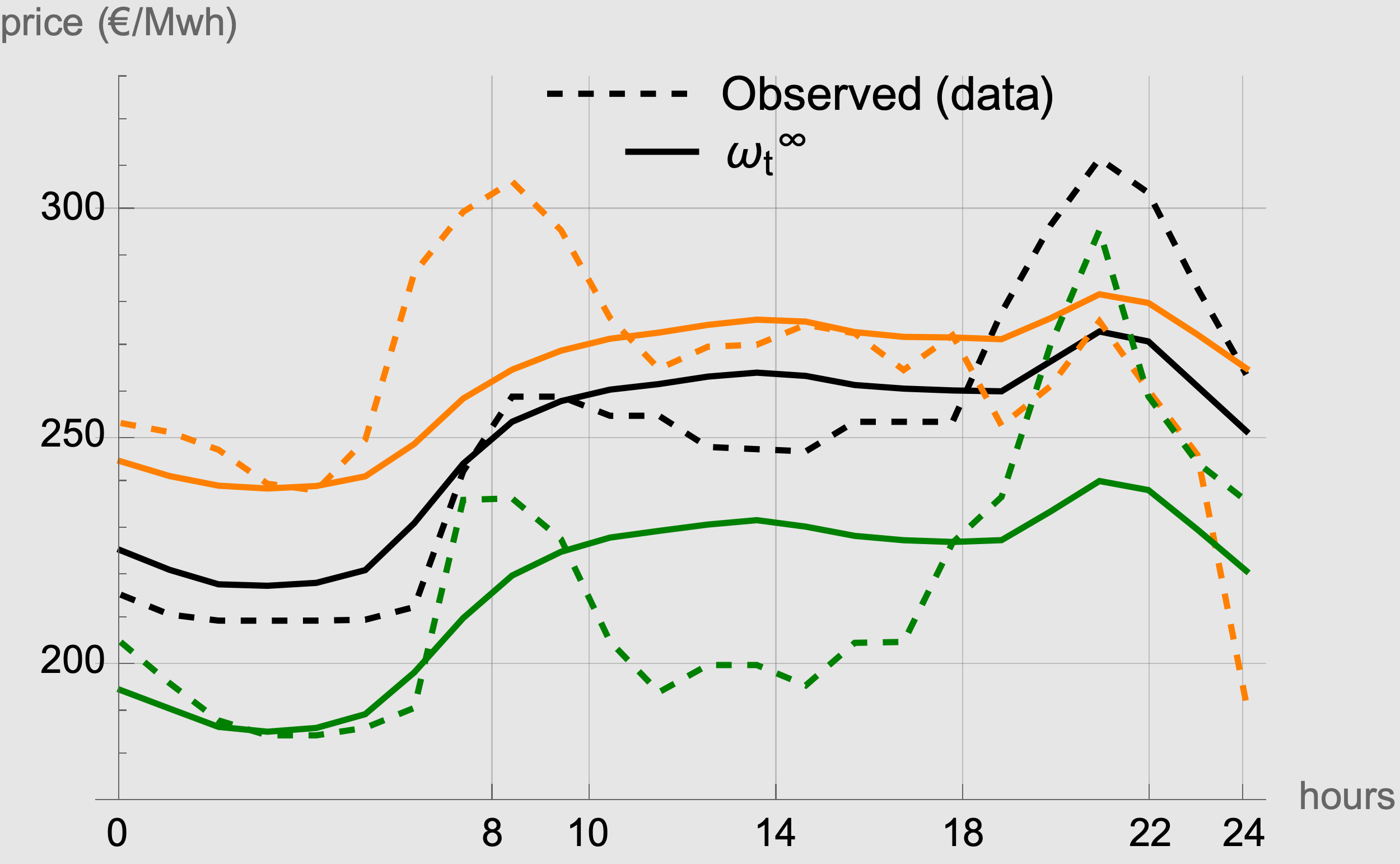}
\caption{(Left) Observed supply (data) and simulated supply. (Right) Observed price (3 data samples) and corresponding simulated price $\varpi^\infty$.}
\label{image:RealD Q price show}
\end{figure}

We observe that price peaks are smoothed, and price variations are reduced. Thus, the price formation mechanism dumps the volatility effect coming from the supply side, and the market may benefit from the smoothing effect. For instance, in June 2021, the Spanish electric introduced voluntary prices for small consumers. The tariffs distinguish three regimes: The peak period (10-14 hrs, 18-22 hrs), the flat period (8-10 hrs, 14-18 hrs, 22-24 hrs), and the valley period (24-8 hrs). The prices are published for the following day, so consumers can decide when to consume energy. If this policy is implemented on a big scale, our price formation model will provide an alternative to balance the different tariffs across regimes.

\section{Conclusions and further directions}
A price formation model for a finite number of agents is presented. This model corresponds to the particle approximation of the continuum model introduced in \cite{gomes2018mean}. Under convexity and growth assumptions on the cost functions, we proved the solvability of Problem \ref{problem: problem}. We presented an approach for the numerical solution of the model with a continuum population and another approach for the finite population model. 

The approach for the numerical solution of the continuum game uses the Hamilton-Jacobi equation that corresponds to the stochastic optimal control problem that each agent solves. In this case, we characterize the price as the solution of an SDE, whose initial condition (the price value at initial time) admits an explicit expression.  Therefore, the error in the approximation depends only on the discrete scheme used to approximate the solution of such SDE. In particular, we use a Forward-Euler scheme, for which the error depends on the time-step size, which can be arbitrarily small without high computational cost due to the explicit nature of the forward scheme. This approach is developed for the linear-quadratic structure of the supply and cost functions.

The approach for the numerical solution of the finite game is suited for any convex cost structure and any supply dynamics. Here, we implement it for the linear-quadratic case only. It relies on the binomial tree approximation of the noise present in the SDE for the supply. As a result, the price is characterized as the Lagrange multiplier of a high-dimensional convex optimization problem with constraints. In this case, as the time-step size decreases, the number of variables in the optimization problem grows exponentially. Therefore, we can not overcome the curse of dimensionality in implementing this approach. However, the results are in good agreement with the theoretical ones. 
%The discrepancies in the path trajectories decrease at a high computational cost. Hence, increasing the number of time-steps is not feasible. Thus, we need a new approach to provide a practical solution to these problems.

The qualitative properties of the price obtained by our schemes agree with what is observed in several markets. Fluctuations in the supply are negatively correlated with the price. For the linear-quadratic setting, two relations are observed in the drift of the price: increasing the running trading rate costs $c$ forces the price to move opposite to the supply dynamics, and the price increases when the time-average supply exceeds the preferred running state $\kappa$ of the agents. Moreover, because essentially, the drift determines the expected value of the price, and the volatility determines its variability, we see that the relation between the time-average supply and the preferred state of the agents determines the mean price, while increments on the trading cost increase the variability of the price. Finally, our model provides the scenario for which market saturation results in negative prices.

Other approaches, such as Machine Learning, can be implemented to deal with the high-dimensional nature of Problem \ref{problem: problem} as the number of players increases.

In our model, the supply of the commodity is an exogenous process; that is, the supply is an input quantity for the model. A further extension is to consider a supply that depends on the price. In this case, both supply and price would be endogenous variables for the model, and they would be determined by the optimal interaction of agents with the market. 

%----------------------------------------------------------------------------------------------------------------------------------
\bibliographystyle{plain}
% Diogo
%\IfFileExists{"/Users/gomesd/mfgDGOFFICE.bib"}
%{\bibliography{/Users/gomesd/mfgDGOFFICE.bib}}

% Ricardo
%\IfFileExists{"/Users/ribeirrd/Dropbox/MasterBIB/mfg.bib"}
%{\bibliography{/Users/ribeirrd/Dropbox/MasterBIB/mfg.bib}}
{\bibliography{mfg.bib}} %fix references in the master bib!
\end{document}